\newcommand{\Z}{{\mathbb Z}}
\newcommand{\C}{{\mathbb C}}
\newcommand{\Q}{{\mathbb Q}}
\newcommand{\rhobar}{{\overline{\rho}}}
\newcommand{\PP}{{\mathbb P}}
\newcommand{\Aff}{{\mathbb A}}
\newcommand{\FF}{{\mathcal F}}
\newcommand{\QQ}{{\mathcal Q}}
\newcommand{\ta}{{\theta}}
\newcommand{\NN}{{\mathcal N}}
\newcommand{\Kbar}{\overline{K}}
\newcommand{\GL}{\operatorname{GL}}
\newcommand{\diag}{\operatorname{Diag}}
\newcommand{\coeff}{\operatorname{coeff}}
\newcommand{\SL}{\operatorname{SL}}
\newcommand{\PGL}{\operatorname{PGL}}
\newcommand{\Gal}{\operatorname{Gal}}
\newcommand{\Aut}{\operatorname{Aut}}
\newcommand{\Hom}{\operatorname{Hom}}
\newcommand{\trace}{\operatorname{trace}}
\newcommand{\Pic}{\operatorname{Pic}}
\newcommand{\eps}{\varepsilon}
\newcommand{\isom}{\cong}
\newcommand{\ra}{\longrightarrow}
\newcommand{\Div}{\operatorname{Div}}
\newcommand{\res}{\operatorname{res}}
\newcommand{\divv}{\operatorname{div}}
\newcommand{\F}{{\mathbb F}}
\newcommand{\G}{{\mathcal G}}
\newcommand{\vv}{{\mathbf v}}
\newcommand{\ww}{{\mathbf w}}
\newcommand{\yy}{{\mathbf y}}
\def\PSL{{\operatorname{PSL}}}
\def\la{{\lambda}}
\def\O{{\mathcal O}}
\newcommand{\cc}{{\mathfrak c}}
\newcommand{\vvv}{{\mathfrak v}}
\newtheorem{Proposition}{Proposition}[section]
\newtheorem{Theorem}[Proposition]{Theorem}
\newtheorem{Lemma}[Proposition]{Lemma}
\newtheorem{Question}[Proposition]{Question}
\theoremstyle{definition}
\newtheorem{Definition}[Proposition]{Definition}
\newtheorem{Remark}[Proposition]{Remark}
\newtheorem{Example}[Proposition]{Example}
\begin{document}
\date{23rd December 2019}
\title[On families of $13$-congruent elliptic curves]
{On families of $13$-congruent elliptic curves}

\author{T.A.~Fisher}
\address{University of Cambridge,
          DPMMS, Centre for Mathematical Sciences,
          Wilberforce Road, Cambridge CB3 0WB, UK}
\email{T.A.Fisher@dpmms.cam.ac.uk}

\renewcommand{\baselinestretch}{1.1}
\renewcommand{\arraystretch}{1.3}

\renewcommand{\theenumi}{\roman{enumi}}

\begin{abstract}
  We compute twists of the modular curve $X(13)$ that parametrise the
  elliptic curves $13$-congruent to a given elliptic curve.  Searching
  for rational points on these twists enables us to find non-trivial
  pairs of $13$-congruent elliptic curves over $\Q$, i.e. pairs of
  non-isogenous elliptic curves over $\Q$ whose $13$-torsion subgroups
  are isomorphic as Galois modules. We also find equations for the
  surfaces parametrising pairs of $13$-congruent elliptic
  curves. There are two such surfaces, corresponding to
  $13$-congruences that do, or do not, respect the Weil pairing. We
  write each as a double cover of the projective plane ramified over a
  highly singular model for Baran's modular curve of level $13$.  By
  finding suitable rational curves on these surfaces, we show that
  there are infinitely many non-trivial pairs of $13$-congruent
  elliptic curves over $\Q$.
\end{abstract}

\maketitle

\section{Introduction}

Elliptic curves $E$ and $E'$ are {\em $n$-congruent} if their
$n$-torsion subgroups are isomorphic as Galois modules. We say the
$n$-congruence has {\em power $k$} if the isomorphism raises the Weil
pairing to the power $k$.  Since multiplication-by-$m$, where $m$ is
an integer coprime to $n$, is an automorphism of the $n$-torsion
subgroup, we are only interested in $k \in (\Z/n\Z)^\times$ up to
multiplication by squares.  Taking $n = p$ an odd prime, we say the
congruence is {\em direct} if $k$ is a quadratic residue, and {\em
  skew} if $k$ is a quadratic non-residue.

The elliptic curves $n$-congruent with power $k$ to a given elliptic
curve $E$ are parametrised by (the non-cuspidal points of) the curve
$X_E(n,k)$. The pairs of elliptic curves that are $n$-congruent with
power $k$, up to simultaneous quadratic twist, are parametrised by (a
Zariski open subset of) the surface $Z(n,k)$.

If elliptic curves $E$ and $E'$ are related by an isogeny of degree
$d$, with $d$ coprime to $n$, then by standard properties of the Weil
pairing, $E$ and $E'$ are $n$-congruent with power $d$.  Congruences
of this form are said to be trivial.  We are interested in the
following two basic questions.
\begin{enumerate}
\item For which prime numbers $p$ do there exist non-trivial pairs
of $p$-congruent elliptic curves over $\Q$?
\item For which prime numbers $p$ do there exist infinitely many 
non-trivial pairs of $p$-congruent elliptic curves over $\Q$?
\end{enumerate}
To be more precise, in (ii) we ask for infinitely many pairs of
$j$-invariants, otherwise from any non-trivial pair of $p$-congruent
elliptic curves we could construct infinitely many by taking
simultaneous quadratic twists.

For $p=3,5$ we have $X_E(p,k) \isom \PP^1$ and so there are infinitely
many elliptic curves $p$-congruent to a given elliptic curve. Explicit
formulae for these families of elliptic curves are given in
\cite{RubinSilverberg} in the direct case, and in \cite{g1hess,enqI}
in the skew case.  For $p \ge 7$ the curves $X_E(p,k)$ have genus at
least $3$, and so by Faltings' theorem there are only finitely many
elliptic curves $p$-congruent to a given elliptic curve.  Kraus and
Oesterl\'e \cite{KO} gave the example of the directly $7$-congruent
elliptic curves $152a1$ and $7448e1$. This was extended to infinitely
many examples by Halberstadt and Kraus \cite{HK}, who also gave an
equation for $X_E(7,1)$. A modification of their method, due to
Poonen, Schaefer and Stoll \cite{PSS}, gives an equation for
$X_E(7,3)$, and from this we were able to exhibit in \cite{7and11}
infinitely many non-trivial pairs of skew $7$-congruent elliptic
curves.

Kani and Schanz \cite{KS} described the geometry of the surfaces
$Z(n,k)$, in particular showing that $Z(11,1)$ is an elliptic surface
of Kodaira dimension $1$. This work was extended by Kani and Rizzo
\cite{KR}, who showed there are infinitely many non-trivial pairs of
directly $11$-congruent elliptic curves.  We gave an alternative more
explicit proof of this fact in \cite{7and11}, and determined a
Weierstrass equation for the elliptic surface $Z(11,1)$ in
\cite{moduli}. Kumar~\cite[Theorem 21]{Ku} computed an equation for
$Z(11,2)$, and although not noted in his paper, the rational curve on
this surface given by $r s - r + s^2 - s + 1 = 0$ gives rise to
infinitely many non-trivial pairs of skew $11$-congruent elliptic
curves.

Examples of non-trivial $13$-congruent elliptic curves have been known
for some time. For example, the pair $52a1$ and $988b1$ appears in
\cite[Table 5.3]{FM}. Since the first of these curves admits a
rational $2$-isogeny, this gives an example of both a direct and a
skew 13-congruence. Prior to our work the only known examples of
non-trivial $13$-congruences were for pairs of elliptic curves that
are both in the range of Cremona's tables (or simultaneous quadratic
twists of such examples). In this paper, we show that there are
infinitely many non-trivial pairs of $13$-congruent elliptic curves,
both in the direct and skew cases.

The only known example of a $p$-congruence for $p > 13$ is the pair of
skew $17$-congruent elliptic curves $3675b1$ and $47775b1$. This
example was originally found by Cremona, and is explicitly recorded in
\cite{Billerey,CF,7and11,FK}. The fact the congruence is skew follows
from \cite[Proposition 2]{KO}. It is a conjecture of Frey and Mazur
that there are no non-trivial pairs of $p$-congruent elliptic curves
for $p$ sufficiently large. On the basis of our work, and that in
\cite{CF}, we might refine this conjecture by suggesting that the
answer to question (i) is the set of primes $p \le 17$, and the answer
to question (ii) is the set of primes $p \le 13$.
 
Another reason why the case $n=13$ is interesting, is that according
to Kani and Schanz \cite[Theorem 4]{KS} it is the smallest value of
$n$ for which all the surfaces $Z(n,k)$ are of general type.

In Section~\ref{sec:statres} we state our main results by giving
equations for $X_E(13,k)$ and $Z(13,k)$ for $k = 1,2$.  We also
describe some of the curves of small genus we found on the surfaces
$Z(13,k)$, including the ones giving rise to our infinite families of
non-trivial pairs of $13$-congruent elliptic curves.

To compute equations for $X_E(13,k)$ we follow the invariant-theoretic
method developed in \cite{7and11}. However we do more to explain the
generality in which we can expect these methods to work.  To compute
the necessary twists we need to start with an embedding of $X(p)$ in
projective space such that the group $\PSL_2(\Z/p\Z)$ acts linearly.
In Section~\ref{sec:X(p)} we explain the reasons behind our choice of
embedding (Klein's $A$-curve) in the case $p = 13$.  In
Section~\ref{sec:X(13)twists} we start on the invariant theory proper,
deriving equations first for $X(13)$, and then for its twists
$X_E(13,1)$ and $X_E(13,2)$. One basic difficulty is that the
invariant of smallest degree has degree~2, but since a quadratic form
has infinite automorphism group, it cannot carry the information
needed to specify our curve. This forced us to work with an invariant
of degree 4. The twisted forms of this invariant are too large to
sensibly include in the paper, but are available from
\cite{magmafile}.

Having equations for $X_E(13,k)$ in principle gives us equations for
$Z(13,k)$. However the equations obtained in this way are very
complicated, and not useful for finding rational points or curves of
small genus on the surfaces. For several smaller values of $n$, as
described in \cite{moduli}, we were able to find substitutions to
simplify these equations. However this step defeated us in the case
$n = 13$. In Section~\ref{sec:mdqs} we instead develop a new approach
for computing equations for $Z(n,k)$, not going via the equations for
$X_E(n,k)$, but still using the invariant theory.

Since the surface $Z(n,k)$ parametrises pairs of elliptic curves, it
comes with a standard involution that corresponds to swapping over the
two elliptic curves. The method in Section~\ref{sec:mdqs} gives us
equations for $Z(13,k)$ as a double cover of the plane, where the map
to the plane quotients out by the standard involution.  This is the
same format as used by Kumar \cite{Ku} when giving his equations for
$Z(n,-1)$ for $n \le 11$.  In using this format we are relying on the
fact that the quotient of $Z(n,k)$ by the standard involution is a
rational surface. It would be interesting to determine how large $n$
must become before this property fails.

In Section~\ref{sec:ex} we give some examples of pairs of non-trivial
$13$-congruent elliptic curves over $\Q$ and over $\Q(t)$. The
examples over $\Q$ may be verified, independently of our work, by
checking that the traces of Frobenius are congruent mod $13$ for
sufficiently many good primes $p$. The examples over $\Q(t)$ give
rise, by specialising~$t$, to the infinitely many examples over $\Q$
that are our main result. 

All computer calculations in support of this work was carried out
using Magma \cite{Magma}. Some Magma files containing some details of
the calculations are available from~\cite{magmafile}.  We refer to
elliptic curves by their labels in Cremona's tables~\cite{Cr}.  We
write $K$ for a field of characteristic $0$ and $\Kbar$ for its
algebraic closure.  % $\Kbar$.

\section{Statement of results}
\label{sec:statres}

\subsection{The curves $X_E(13,1)$ and $X_E(13,2)$}
\label{sec:state-curves}
The elliptic curves $n$-congruent with power $k$ to a given elliptic
curve $E$ are parametrised by (the non-cuspidal points of) the smooth
projective curve $X_E(n,k)$.  We have computed equations for these
curves in the case $n = 13$.  In this section we give formulae first
for $X(13)$, and then for $X_E(13,1)$ and $X_E(13,2)$, each as a curve
of degree $42$ in $\PP^6$. Since the equations themselves would (in
the latter two cases) take several pages to write out, we instead
describe how they may be recovered from a set of $14$ hyperplanes,
equivalently a set of $14$ points in the dual projective space
$(\PP^6)^\vee$.  This description (which only uses linear algebra)
does not however correspond to how we originally computed the
equations.

In the case of $X(13)$ the $14$ points are 
\begin{equation}
\label{14pts-X13}
(1:0: \ldots : 0) \quad \text{ and } \quad (1:\zeta^k: \zeta^{3k} :
\zeta^{4k} :\zeta^{9k} :\zeta^{10k} :\zeta^{12k})
\end{equation}
where $\zeta = e^{2 \pi i/13}$ and $0 \le k \le 12$.

\begin{Theorem}
\label{thm:X13viapts}
Let $U$ be the $14$-dimensional space of quadratic forms vanishing at
the $14$ points \eqref{14pts-X13}. Let $U^\perp$ be the
$14$-dimensional space of quadratic forms annihilated by
\[ \left\{ f \left( \frac{\partial}{\partial x_1}, \ldots,
    \frac{\partial}{\partial x_7} \right) : f \in U \right\}. \]
Let $V \subset U^\perp$ be the $13$-dimensional subspace spanned by
the support of all linear syzygies, i.e. the span of the set
\[ \left\{ \sum_{i=1}^7 \lambda_i f_i \,\, \bigg| \,\, \lambda_i \in
  \{ 0,1 \}, f_i \in U^\perp \text{ and } \sum_{i=1}^7 x_i f_i = 0
\right\}. \]
Let $W$ be the $7$-dimensional space of cubic forms whose partial
derivatives belong to $V$. Then % the cubic forms in
$W$ defines the union of $X(13) \subset \PP^6$ and $42$ lines.
\end{Theorem}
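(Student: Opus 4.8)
The plan is to treat the recipe as a device for reconstructing the homogeneous ideal of $X(13)\subset\PP^6$, and to use the representation theory of $G=\PSL_2(\F_{13})$ together with the equations for $X(13)$ from the earlier sections to explain and then verify the output. Here $\PP^6=\PP(W_7)$ for the $7$-dimensional representation $W_7$ used for Klein's $A$-curve. I would first observe that the fourteen points in \eqref{14pts-X13} form a single $G$-orbit: the exponents $1,3,4,9,10,12$ are the nonzero squares mod $13$, the stabiliser of $(1:0:\ldots:0)$ is a Borel subgroup (order $78$), so the orbit is a copy of $\PP^1(\F_{13})$. Hence $U$, the kernel of the $G$-equivariant evaluation map $\Sym^2 W_7^\vee\to\C[\PP^1(\F_{13})]\isom\mathbf{1}\oplus\mathrm{St}$ (trivial plus Steinberg), is a subrepresentation; one then checks that the apolar space $U^\perp$ is also $G$-stable --- not completely formal, since the given coordinates are adapted to the fourteen-point orbit rather than to the $G$-invariant quadric $\Phi_2$ --- whence $V$ and $W$ are too, the condition ``all partial derivatives lie in $V$'' being coordinate-free once $V$ is a fixed subspace.

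The four dimensions then follow from decomposing $\Sym^2 W_7^\vee$ (dimension $28$) and $\Sym^3 W_7^\vee$ (dimension $84$) via the character table of $\PSL_2(\F_{13})$. One finds $\Sym^2 W_7^\vee$ contains $\mathbf{1}$ once (spanned by $\Phi_2$) and the Steinberg module $\mathrm{St}$ once, so --- the fourteen points imposing independent conditions on quadrics, a single $14\times28$ rank check --- $U$ is the remaining $14$-dimensional constituent and $U^\perp\isom\mathbf{1}\oplus\mathrm{St}$. Since the only nonzero proper subrepresentations of $\mathbf{1}\oplus\mathrm{St}$ have dimensions $1$ and $13$, establishing $\dim V=13$ reduces to showing $\Phi_2$ occurs in no linear syzygy of $U^\perp$ --- equivalently $\Phi_2\cdot W_7^\vee\cap W_7^\vee\cdot\mathrm{St}=0$ inside $\Sym^3 W_7^\vee$ --- after which $V$ is forced to be the Steinberg summand. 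Likewise $W$ is the kernel of the $G$-map $\Sym^3 W_7^\vee\to W_7\otimes(\Sym^2 W_7^\vee/\mathrm{St})$ given by the gradient, and evaluating this on isotypic components identifies $W$ with a single copy of one of the two $7$-dimensional irreducibles, so $\dim W=7$.

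It remains to show $V(W)=X(13)\cup(\text{$42$ lines})$. For $X(13)\subseteq V(W)$ it suffices that $W=I(X(13))_3$: Riemann--Roch on $X(13)$ (genus $50$, hyperplane class of degree $42$, so $\OO_{X(13)}(3)$ of degree $126>2g-2$ with vanishing $H^1$) gives $h^0(\OO_{X(13)}(3))=77$, hence $\dim I(X(13))_3=84-77=7$ provided $X(13)$ is projectively normal in degree $3$; as $I(X(13))_3$ is a nonzero $G$-subrepresentation it is the $7$-dimensional irreducible found above, and a multiplicity-one check in $\Sym^3 W_7^\vee$ (or comparison with the explicit equations) gives $W=I(X(13))_3$. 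For the $42$ lines: they form the $G$-orbit of a line $\ell\subset\PP(W_7)$ whose stabiliser is the dihedral group $D$ of order $26$ generated by a Sylow $13$-subgroup and an inverting involution (so the orbit has length $1092/26=42$); there are exactly three such $D$-stable lines, they make up this one orbit, and restricting the cubics in $W$ to $\Sym^3\ell^\vee$ and comparing $D$-constituents shows they vanish on $\ell$. The step I expect to be the genuine obstacle is the reverse inclusion, that $V(W)$ contains nothing else. Equivariance helps --- any extra component lies in a $G$-orbit, and the simple group $G$ has no fixed point and no $G$-stable rational or genus-$1$ curve in $\PP^6$ --- but ruling out a stray $G$-stable curve of higher genus, or extra lines in some other orbit, appears to need the explicit primary decomposition of the ideal $(W)$ carried out in Magma; intersecting that with the quadrics in $U^\perp$ then returns $X(13)$ itself.
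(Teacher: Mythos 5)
Your outline is sound and would work, but it follows a genuinely different route from the paper's. The paper proves this statement as part of Theorem~\ref{thm:QF->curve}: the dimension counts and the identification of $W$ (characterised there as the apolar space of the quartic $F-3Q^2$) are dispatched as explicit linear algebra over $\Q(\zeta_{13})$, with $f_0,f_1$ written out and the rest obtained from the cyclic symmetry $M_6$; the inclusion $X(13)\subseteq V(W)$ is obtained not from Riemann--Roch but by transporting the known Pfaffian equations of the $z$-curve from \cite{7and11} to the $A$-curve via the Adler--Ramanan substitution and checking agreement in Magma; and the endgame is a degree count rather than a primary decomposition --- Magma shows the cubics cut out a curve of degree $84$, which contains $X(13)$ (degree $42$) and the $G$-orbit of the $42$ lines $P_1P_4$, $P_2P_5$, $P_3P_6$ (total degree $42$), so must equal their union. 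Your representation-theoretic packaging ($U^\perp\isom\mathbf{1}\oplus\mathrm{St}$, $V$ forced to be the Steinberg summand, $W$ an irreducible $7$-dimensional constituent, vanishing on the lines read off from the order-$26$ dihedral stabiliser) explains \emph{why} the recipe works and is a worthwhile complement to the paper's direct verification. Two caveats: your identification $W=I(X(13))_3$ quietly presupposes surjectivity of $\Sym^3 H^0(\OO_X(1))\to H^0(\OO_X(3))$, which is exactly Lemma~\ref{lem:notpn} --- itself proved from the Gr\"obner basis of $I(X)$ and not automatic, since the model fails to be projectively normal in degree $2$ --- together with a multiplicity-one check and the observation that the other $7$-dimensional degree-$3$ covariant $\nabla_Q F$ does not vanish on $X$; and the degree-$84$ computation is a much lighter certificate that $V(W)$ contains nothing further than the primary decomposition you propose.
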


Our equations for $X_E(13,1)$ and $X_E(13,2)$ are obtained from those
for $X(13)$ by twisting, that is, by making a change of coordinates on
$\PP^6$ defined over $\Kbar$.  To describe the points that take the
place of~\eqref{14pts-X13}, we let $t$ be a coordinate on
$X_0(13) \isom \PP^1$ chosen (following Fricke) so that the $j$-map is
given by
\[ j = (t^2 + 5 t + 13) (t^4 + 7 t^3 + 20 t^2 + 19 t + 1)^3/t. \]
It is easy to write down an elliptic curve with this $j$-invariant.
For example, we may take the elliptic curve
$y^2 = x^3 - 27 c_4(t)x - 54 c_6(t)$ where
\begin{align*}
c_4(t) &= (t^2 + 5 t + 13) (t^2 + 6 t + 13)
          (t^4 + 7 t^3 + 20 t^2 + 19 t + 1), \\
c_6(t) &= (t^2 + 5 t + 13) (t^2 + 6 t + 13)^2
          (t^6 + 10 t^5 + 46 t^4 + 108 t^3 + 122 t^2 + 38 t - 1).
\end{align*}

We define polynomials $f_1, \ldots,f_7$ and $g_1,\ldots, g_7$ by
\begin{equation*}
\begin{aligned}
f_1(s,t) &= 1, \\
f_2(s,t) &= -(t + 1), \\
f_3(s,t) &= 3 s (t + 2) (t^2 + 5 t + 13) (t^2 + 6 t + 13), \\
f_4(s,t) &= -9 s (t + 1) (t^2 + 5 t + 13) (t^2 + 6 t + 13), \\
f_5(s,t) &= 108 s^2 (t^2 + 5 t + 13) (t^2 + 6 t + 13)
                         (t^3 + 5 t^2 + 10 t + 2) + 27 s^2 (t + 3) c_4(t), \\
f_6(s,t) &= 162 s^2 (t^2 + 5 t + 13) (t^2 + 6 t + 13)
                         (t^3 + 6 t^2 + 14 t + 7) + 27 s^2 (t + 4) c_4(t), \\
f_7(s,t) &= 11664 s^3 (t + 1) (t^2 + 5 t + 13) (t^2 + 6 t + 13)^2
                       + 54 s^2 c_4(t) f_4(s,t),
\end{aligned}
\end{equation*}
and
\begin{equation*}
\begin{aligned}
g_1(s,t) &= 2, \\
g_2(s,t) &= 2 (t + 1), \\
g_3(s,t) &= 3 s (t^2 + 6 t + 13) (t^3 + 4 t^2 + 8 t - 1), \\
g_4(s,t) &= 12 s (t^2 + 6 t + 13) (t^2 + 3 t + 5), \\
g_5(s,t) &= 6 s (t^2 + 6 t + 13) (t^3 + 8 t^2 + 20 t + 7), \\
g_6(s,t) &= 108 s^2 (t + 1)^2 (t^2 + 5 t + 13) (t^2 + 6 t + 13)
	           - 9 s^2 (t + 3) c_4(t), \\
g_7(s,t) &= -216 s^2 (t - 1) (t^2 + 5 t + 13) (t^2 + 6 t + 13)
                   - 18 s^2 (t + 2) c_4(t).  
\end{aligned}
\end{equation*}

\begin{Theorem}
\label{XE13thm}
Let $E/K$ be the elliptic curve $y^2 = x^3 + a x + b$.  Let $U_1$,
respectively $U_2$, be the space of quadratic forms vanishing at
\[ (f_1(s,t): \ldots : f_7(s,t)), \,\, \text{ respectively } \,\,
(g_1(s,t): \ldots : g_7(s,t)), \]
for all $s,t \in \Kbar$ satisfying $a = -27 s^2 c_4(t)$ and
$b = -54 s^3 c_6(t)$.  Let $W_k$ be the space of cubic forms
constructed from $U_k$ by the procedure in
Theorem~\ref{thm:X13viapts}. Then $W_k$ defines the union of
$X_E(13,k) \subset \PP^6$ and $42$ lines, where the latter are not in
general defined over $K$.
\end{Theorem}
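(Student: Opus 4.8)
The plan is to deduce Theorem~\ref{XE13thm} from Theorem~\ref{thm:X13viapts}. The point is that the recipe taking a finite set $\Sigma \subset \PP^6$ to a space of cubic forms --- via the quadrics $U$ vanishing on $\Sigma$, the apolar space $U^\perp$, the span $V$ of the supports of the linear syzygies of $U^\perp$, and finally $W$ --- is covariant for the action of $\GL_7(\Kbar)$ on $\PP^6$ and on forms: each of the four steps is defined without reference to a choice of coordinates, so replacing $\Sigma$ by $T(\Sigma)$ for $T \in \GL_7(\Kbar)$ replaces $U, U^\perp, V, W$ by $T\cdot U,\ T\cdot U^\perp,\ T\cdot V,\ T\cdot W$ respectively. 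Granting Theorem~\ref{thm:X13viapts}, it follows that the cubic space associated to $T(\Sigma_0)$ --- where $\Sigma_0$ denotes the $14$ points \eqref{14pts-X13} --- defines $T(X(13)) \cup T(\Lambda)$, $\Lambda$ being the accompanying set of $42$ lines.

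By the construction recalled in the excerpt, $X_E(13,k)$ is obtained from $X(13) \subset \PP^6$ by a change of coordinates $T_k \in \GL_7(\Kbar)$. Hence everything reduces to the claim that $T_k$ carries $\Sigma_0$ to the $14$-point set $\Sigma_k$ of Theorem~\ref{XE13thm} --- namely $(f_1(s,t):\cdots:f_7(s,t))$ for $k=1$, respectively $(g_1(s,t):\cdots:g_7(s,t))$ for $k=2$, as $(s,t)$ runs over the solutions of $a = -27 s^2 c_4(t)$, $b = -54 s^3 c_6(t)$ --- for then $W_k$ defines $T_k(X(13)) \cup T_k(\Lambda) = X_E(13,k) \cup T_k(\Lambda)$, which is the assertion, with the $42$ lines being $T_k(\Lambda)$.

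This identification is the real content and the main obstacle. The set $\Sigma_0$ is a single $\PSL_2(\F_{13})$-orbit, canonically indexed by $\PP^1(\F_{13})$ --- equivalently by the $14$ cyclic subgroups of order $13$ of $(\Z/13\Z)^2$ --- each of its points being the unique point of $\PP^6$ fixed by a Borel subgroup of $\PSL_2(\F_{13})$. Applying $T_k$ therefore produces a $G_K$-stable set of $14$ points in the ambient $\PP^6$ of $X_E(13,k)$ on which $G_K$ acts exactly as it does on the $14$ cyclic subgroups of $E[13]$ --- this being how the twist defining $X_E(13,k)$ is built from the mod $13$ representation of $E$ and the power $k$ --- equivalently as on the $14$ points of $X_0(13)$ over $j(E)$, and these are parametrised precisely by the pairs $(s,t)$ above once a Weierstrass model of $E$ and the Fricke coordinate $t$ are fixed. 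That the embedding of this orbit of $14$ points into $\PP^6$ is given by the explicit polynomials $f_i$ (resp.\ $g_i$) is exactly what is produced by the invariant-theoretic construction of $X_E(13,k)$ in Section~\ref{sec:X(13)twists}: that computation yields a representative for $T_k$, and pushing $\Sigma_0$ through it gives, after clearing denominators, the stated formulae. Alternatively one may argue by uniqueness: any two sets of $14$ points in $\PP^6$ that span, that form a single $G_K$-orbit, that lie on exactly a $14$-dimensional space of quadrics, and that induce the same linear $G_K$-action, differ by a $\PGL_7(\Kbar)$-transformation; so it suffices to verify these four properties for $\Sigma_k$ and to match the induced action with the twisting cocycle. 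In practice this last step is checked by machine --- computing $W_k$ from $\Sigma_k$ in Magma for a representative curve $E$ and comparing with the equations of Section~\ref{sec:X(13)twists} --- all quantities involved being algebraic in $(a,b)$.

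Finally, for the rationality statement: since $a,b \in K$ and the polynomials $c_i(t), f_i(s,t), g_i(s,t)$ have rational coefficients, the set of solutions $(s,t)$, and hence $\Sigma_k$, is stable under $G_K$; therefore $U_k$ and every subsequent space, in particular $W_k$, is defined over $K$, and so is the scheme it cuts out. As $X_E(13,k)$ is itself defined over $K$, the residual $42$ lines form a $G_K$-stable set, which $G_K$ permutes through the mod $13$ representation of $E$. For generic $E$ this representation has large image, so these lines are individually defined only over proper extensions of $K$ --- which is the force of ``not in general defined over $K$''.
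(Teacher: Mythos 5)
Your argument follows the same route the paper (implicitly) takes: Theorem~\ref{XE13thm} is not proved separately in the text but rests on the equivariance of the linear-algebra recipe of Theorem~\ref{thm:X13viapts}, the twisting results of Section~\ref{sec:X(13)twists} (Theorems~\ref{thm:XE13-dir} and~\ref{thm:XE13-skew}), and a machine verification that the $14$ twisted hyperplanes are the ones given by the $f_i$ and $g_i$ --- exactly the reduction you make. Your identification of where the real content lies, and of the Galois-theoretic reason the points are indexed by the fibre of $X_0(13)$ over $j(E)$ and the lines are not individually rational, matches the paper's intent, so this is essentially the same approach.
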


The cubic forms in Theorem~\ref{XE13thm}, as polynomials with
coefficients in $\Z[a,b]$, are available from \cite{magmafile}.  As
described in Sections~\ref{sec:XE(13,1)} and~\ref{sec:XE(13,2)}, we
have also found equations that define the curve $X_E(13,k)$ exactly,
and define the $j$-map $X_E(13,k) \to \PP^1$.  It would be possible to
simplify the $f_i(s,t)$ and $g_i(s,t)$ by making a change of
coordinates on $\PP^6$.  However, we made our choice of co-ordinates
with the aim of simplifying the cubic forms.

\begin{Remark}
\label{rem:gradings}
If $a$ and $b$ have weights $2$ and $3$, and $x_1, \ldots, x_7$ have
weights $3,3,2,2,1,1,0$, then the cubic forms in the case $k=1$ are
homogeneous with weights $6, 7, 7, 8, 8, 9, 9$.  Likewise, if
$x_1, \ldots, x_7$ have weights $2,2,1,1,1,0,0$, then the cubic forms
in the case $k=2$ have weights $4, 5, 5, 6, 6, 6, 7$.  These gradings
reflect the fact that $X_E(13,k)$ only depends on $E$ up to quadratic
twist.
\end{Remark}

\subsection{The surfaces $Z(13,1)$ and $Z(13,2)$}
\label{state:Z}

The surface $Z(n,k)$ parametrises pairs of elliptic curves $E$ and
$E'$ that are $n$-congruent with power $k$, up to simultaneous
quadratic twist.  We have computed equations for these surfaces in the
case $n = 13$.
\begin{Theorem}
\label{thm:Zeqns}
(i) The surface $Z(13,1)$ is birational over $\Q$ to the surface with
affine equation $y^2 + h_1(r,s)y = g_1(r,s)$ where
\begin{align*}
h_1(r,s) &= s^4 + (2 r^2 - 5 r + 7) s^3 + (r^4 - 3 r^3 - 14 r^2 + r + 16) s^2 
  \\ & \qquad  + r^2 (2 r^3 - 5 r^2 + 15 r + 27) s + r^4 (r^2 - 1), \\
g_1(r,s) &= 4 (7 r - 8) s^6 + 22 (r - 2) s^5 - (28 r^5 + 24 r^4 - 2 r^3 -
    39 r^2 + 2 r + 68) s^4 \\ & \qquad + r^2 (84 r^3 + 233 r^2 - 116 r - 223) s^3
      - r^4 (20 r^2 + 181 r + 181) s^2 \\ & \qquad - 4 r^6 (r - 1) (7 r + 3) s.
\end{align*}
(ii) The surface $Z(13,2)$ is birational over $\Q$ to the surface with
affine equation $y^2 + h_2(r,s)y = g_2(r,s)$ where
\begin{align*}
h_2(r,s) &= r^3 s^4 + r (2 r^3 + 7 r^2 + 1) s^3 + (r^5 + 7 r^4 + 9 r^3 + r^2 +
    1) s^2 \\ & \qquad + 2 (r^3 + 2 r + 1) s + r + 1, \\
g_2(r,s) &= 2 r^4 (5 r + 4) s^7 + r^3 (19 r^3 + 48 r^2 + 33 r + 22) s^6
  \\ & \qquad + r^2 (8 r^5 + 40 r^4 + 79 r^3 + 82 r^2 + 47 r + 21) s^5
  \\ & \qquad - r (r^7 - 29 r^5 - 91 r^4 - 75 r^3 - 53 r^2 - 34 r - 7) s^4
  \\ & \qquad + r (6 r^6 + 35 r^5 + 50 r^4 + 37 r^3 + 42 r^2 + 22 r + 10) s^3
  \\ & \qquad + r (14 r^4 + 33 r^3 + 30 r^2 + 14 r + 1) s^2 + r^2 (10 r + 13) s + 2 r.
\end{align*}
(iii) Let $k = 1$ or $2$. Let $j,j' : Z(13,k) \to \PP^1$ be the maps
giving the $j$-invariants of the elliptic curves $E$ and $E'$.  We
have computed polynomials $A_k, B_k, D_k \in \Z[r,s]$ such that
$j j' = A^3_k/D_k$ and $(j - 1728)(j'-1728) = B^2_k / D_k$.  The
polynomials $A_k$ and $B_k$ are available from \cite{magmafile}.  The
$D_k$ are given by
\begin{align*}
  D_1(r,s) &= s^5 (r + s - 1)^4 (r^2 + s - 1)^2 (r^4 + r^3 s - r^3 + r s^2 - r s - s^2 + s)^{13},  \\
  D_2(r,s) &=      -r^6 (r^2 + r s + r + 1)^3 (r^3 s + r^2 s^2 + 2 r^2 s + r s^2 + r s + r + s)^{13}.
\end{align*}
\end{Theorem}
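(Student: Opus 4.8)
The plan is to deduce the equations for $Z(13,k)$ from the new approach developed in Section~\ref{sec:mdqs} (the ``mdqs'' construction) rather than from the equations for $X_E(13,k)$ in Theorem~\ref{XE13thm}, which as explained in the introduction are too unwieldy. First recall what $Z(n,k)$ is: it is the quotient of the threefold of triples $(E, E', \phi)$, where $\phi: E[n] \to E'[n]$ is a symplectic-up-to-power-$k$ isomorphism of Galois modules, by the action of simultaneous quadratic twist (equivalently, by $\Gm$ acting on the Weierstrass coefficients of both curves). Since we are modding out by simultaneous quadratic twist, we may rigidify by fixing $E$ to be the Fricke model $y^2 = x^3 - 27 s^2 c_4(t) x - 54 s^3 c_6(t)$ with $j$-invariant as in Section~\ref{sec:statres}; then the data of $E'$ is captured by a second pair $(s', t')$, and the remaining datum is the choice of $\phi$ up to scaling. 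The output of the Section~\ref{sec:mdqs} method is a model of this parameter space, which after clearing the rigidifying $\Gm$ lands us on a surface mapping $2:1$ to a rational surface via the standard involution swapping $E \leftrightarrow E'$.

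The key steps are as follows. (1) Run the mdqs construction to obtain, in the notation of that section, explicit equations for $Z(13,k)$; then exhibit a birational parametrisation of the image of the standard-involution quotient by $\PP^2$ (this is the step that requires the quotient to be rational, which we are assuming and which the explicit construction will verify). (2) In the resulting $(r,s)$ coordinates on this $\PP^2$, read off the branch locus of the double cover and put the equation into the form $y^2 + h_k(r,s) y = g_k(r,s)$; this amounts to completing the square (away from characteristic $2$, which is fine since $\charic K = 0$) and then choosing $h_k$ to be a convenient polynomial representative of the branch class modulo squares, with $g_k$ adjusted accordingly. (3) To verify the $j$-map formulae in part (iii): the composite $Z(13,k) \to X_0(13) \times X_0(13) \to \PP^1 \times \PP^1$, $(r,s) \mapsto (j, j')$, is a morphism of the rational base $\PP^2_{r,s}$, so $jj'$ and $(j-1728)(j'-1728)$ are rational functions of $r,s$; compute them, and check by direct algebra that they have the asserted shapes $A_k^3/D_k$ and $B_k^2/D_k$ with the stated $D_k$. (4) Finally, confirm that the birational maps in (i)--(ii) are defined over $\Q$: this is automatic from the construction since all the invariant-theoretic data and the Fricke normalisation are rational, but one should note it explicitly.

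In practice steps (1) and (2) are where the real work lies, but much of it is the bookkeeping already set up in Section~\ref{sec:mdqs}; given that machinery, the genuinely delicate point is \emph{the choice of coordinates $(r,s)$ on the quotient plane} so that the resulting $h_k, g_k$ (and the $D_k$, and the unlisted $A_k, B_k$) are as small as they are stated to be. There is no canonical choice: one searches among $\operatorname{PGL}_3$-changes of coordinates on $\PP^2$ (and Weierstrass-type changes $y \mapsto y + (\text{polynomial})$, $y \mapsto u y$) to minimise the degree and coefficient size, guided by the requirement that the branch locus acquire the visible factorisation reflected in $D_k$ — for instance the factors $s^5(r+s-1)^4(\cdots)^{13}$ in $D_1$ signal components of the ramification/cusp locus of prescribed multiplicities, and one chooses coordinates adapted to these components. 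The hard part, then, is this normalisation/simplification; the verification that the final equations are correct (steps (3)--(4), and re-substituting to confirm (1)--(2)) is a routine, if lengthy, computer algebra check, carried out in Magma as noted in the introduction, and the point-count consistency with the known pair $52a1$, $988b1$ gives an independent sanity check.
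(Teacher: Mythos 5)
Your outline captures the right general shape --- realise $Z(13,k)$ as a double cover of a rational plane, complete the square, and compute the symmetric functions $jj'$ and $(j-1728)(j'-1728)$ as rational functions of $(r,s)$ --- but it defers essentially all of the mathematical content to ``run the mdqs construction'', which is precisely what a proof of Theorem~\ref{thm:Zeqns} has to supply. Three concrete gaps. First, the starting point: the paper does not rigidify by fixing $E$ to the Fricke model; it uses Lemma~\ref{lem:quot} to identify $Z(13,k)$ with the quotient of $X(13)\times X(13)$ by the (possibly twisted) diagonal action of $G\isom\PSL_2(\Z/13\Z)$, and then computes \emph{bi-invariants} on the product: the nine symmetric bi-invariants of degree $(3,3)$ satisfy quadratic relations cutting out a rational surface $\Sigma\subset\PP^7$ parametrised by $(r,s)$, and the unique anti-symmetric bi-invariant $w$ of degree $(3,3)$ satisfies $w^2=(\ast)^2F_1(r,s,1)$, which is where the double cover comes from (skew bi-invariants of degrees $(2,2)$ and $(3,3)$ in the case $k=2$). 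Your Fricke rigidification introduces a spurious degree-$14$ cover ($t$ is a coordinate on $X_0(13)$, not on the $j$-line) and gives no concrete handle on the finite datum $\phi$. Second, certifying the polynomial relations modulo $I(X\times X)$ is not routine: Gr\"obner bases are too slow for this model, and the paper needs a Bezout-type criterion (Lemma~\ref{lem:23}) to convert evidence gathered mod $p$ into proofs.

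Third, and most seriously, nothing in your proposal shows that the map so constructed from $Z(13,k)$ to the double cover $\{y^2=F_k(r,s,1)\}$ is \emph{birational} rather than of higher degree; a priori the chosen bi-invariants might generate only a proper subfield of the function field of the quotient. The paper closes this gap (Remark~\ref{deg1-dir}) by a group-theoretic argument: the extension of function fields corresponding to $X\times X\to\PP^1\times\PP^1$ is Galois with group $G\times G$, the diagonal subgroup is maximal because $G$ is simple, so the constructed surface is the quotient either by the diagonal or by all of $G\times G$, and the latter is excluded by exhibiting a function that is diagonal-invariant but not $G\times G$-invariant. Without some such argument the assertion ``birational over $\Q$'' is unproven, and no amount of re-substitution or point-count sanity checking supplies it. (Your step for part (iii) is fine in principle --- $jj'$ does descend to the quotient plane --- but computing it requires further bi-invariants and relations of the same kind, again certified via Lemma~\ref{lem:23}.)
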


\begin{Remark} 
\label{completethesquare}
Let $k=1$ or $2$. By completing the square, the first two parts of
Theorem~\ref{thm:Zeqns} are equivalent to the statement that $Z(13,k)$
is birational to the surface $y^2 = F_k(r,s,1)$ where $F_k$ is the
homogeneous polynomial of degree $10+2k$ satisfying
$F_k(r,s,1) = h_k(r,s)^2 + 4 g_k(r,s)$.
\end{Remark} 

According to Kani and Schanz~\cite{KS}, the surfaces $Z(13,k)$ are of
general type, and so by the Bombieri-Lang conjecture (see for example
\cite{HS}) are expected to contain only finitely many curves of genus
$0$ or $1$.

On $Z(13,1)$ there are genus $0$ curves given by the vanishing of $s$,
$r + s - 1$, $r^2 + s - 1$, $r$ and $r + s$. The first three of these
are factors of $D_1$, and so do not correspond to any families of
elliptic curves. The last two define copies of the modular curves
$X_0(10)$ and $X_0(25)$. Remarkably we found a further pair of
genus~$0$ curves given by
\[r^5 + r^4 s - 3 r^4 - r^3 s + 2 r^2 s^2 - 4 r^2 s - 2 r s^2 + s^3 -
4 s^2 = 0.\]
From this we obtain the infinite family of directly 13-congruent
elliptic curves presented in Example~\ref{QTex:dir}.  There are also
genus $1$ curves given by the vanishing of $r^2 + s$,
$r^2 + r s - r - s + 1$ and $r^2 + r s - s$. These are copies of
$X_0(m)$ for $m= 27,36$ and $49$.

On $Z(13,2)$ there are genus $0$ curves given by the vanishing of $r$,
$r^2 + r s + r + 1$, $s$ and $r^2 s + r s^2 + r s + 2 s^2 - 2 s + 1$.
The first two are factors of $D_2$, the third is a copy of $X_0(18)$,
and from the fourth we obtain the infinite family of skew 13-congruent
elliptic curves presented in Example~\ref{QTex:skew}. There are also
genus $1$ curves given by the vanishing of $r + 1$, $s - 1$,
$r s + 1$, $r^2 s + 2 r s + 1$ and $r^2 s + r s + 1$.  These are
copies of $X_0(m)$ for $m = 19,20,21,24$ and $32$. A further genus $1$
curve is given by
\[ r^3 s + r^2 s^2 + 3 r^2 s + 4 r s + r + 2 = 0. \]
This is an elliptic curve of rank $2$ with Cremona label $267632f1$
and Weierstrass equation $y^2 = x^3 - 515 x - 4494$. It parametrises
another infinite family of non-trivial pairs of skew $13$-congruent
elliptic curves.

It would be interesting to determine whether there are any more curves
of genus 0 or 1 on the surfaces $y^2 = F_k(r,s,1)$.

\subsection{Baran's modular curve}
\label{sec:baran}

For $k=1,2$ we have written $Z(13,k)$ as a double cover of $\PP^2$
ramified over the curve $C_{k} = \{ F_{k} = 0 \}$.  A rational point
on $C_{k}$ corresponds to an elliptic curve that is $13$-congruent to
itself in a non-trivial way. Such a congruence is only possible if the
mod 13 Galois representation of the elliptic curve is not
surjective. More specifically, arguing as in \cite{Halberstadt}, we
see that $C_1$ and $C_2$ are copies of the modular curves
$X_{\rm s}^+(13)$ and $X_{\rm ns}^+(13)$ associated to the normaliser
of a split or non-split Cartan subgroup of level $13$.

These curves were first computed by Baran \cite{Baran}, who also
discovered the surprising fact, specific to level $13$, that the two
curves are isomorphic. We were able to verify using Magma that our
singular curves $C_1$ and $C_2$ (of degrees $12$ and $14$) are both
birational to the smooth plane quartic
\[ C = \{ (y + z) x^3 - (2 y^2 + y z) x^2 + (y^3 - y^2 z + 2 y z^2 -
z^3) x - (2 y^2 z^2 - 3 y z^3) = 0 \} \subset \PP^2 \]
found by Baran. Using Theorem~\ref{thm:Zeqns}(iii) we were also able
to recover the two different moduli interpretations of this curve, as
given in \cite[Appendix A]{Baran}.  We remark that the determination
of all $\Q$-rational points on $C$ (and hence also on $C_1$ and $C_2$)
was recently completed in \cite{cursedcurve}.

We describe further modular curves on the surfaces $Z(13,k)$ in
Section~\ref{sec:modcrvs}.

\section{Twists and quotients}
\label{sec:twq}

In this section we recall the definition of $X(n)$ over a
non-algebraically closed field, and explain how in principle
$X_E(n,k)$ may be described as a twist of $X(n)$.  We also describe
$Z(n,k)$ as a quotient of $X(n) \times X(n)$.  We write $\zeta_n$ for
a primitive $n$th root of unity, and $\mu_n$ for the group of all
$n$th roots unity.

Let $n \ge 3$ be an integer. The modular curve $X(n)$ is the smooth
projective curve birational to $Y(n)$, where $Y(n)$ is the modular
curve parametrising the pairs $(E,\phi)$ where $E$ is an elliptic
curve and $\phi : E[n] \to \mu_n \times \Z/n\Z$ is a symplectic
isomorphism. By symplectic we mean that the Weil pairing on $E[n]$
agrees with the standard pairing
$((\zeta,c),(\xi,d)) \mapsto \zeta^d \xi^{-c}$ on
$\mu_n \times \Z/n\Z$. We note that, with this definition, $X(n)$ is
both defined over $\Q$ and geometrically irreducible.

Let $\Gamma$ be the group of symplectic automorphisms of 
$\mu_n \times \Z/n\Z$. As a group this is a copy of $\SL_2(\Z/n\Z)$,
but with Galois action given by
\begin{equation}
\label{twistedgal}
 \sigma(\gamma) = 
\begin{pmatrix} \chi(\sigma) & 0 \\ 0 & 1 \end{pmatrix} \gamma
\begin{pmatrix} \chi(\sigma)^{-1} & 0 \\ 0 & 1 \end{pmatrix} 
\end{equation}
where $\chi$ is the mod $n$ cyclotomic character.  There is an action
of $\Gamma$ on $X(n)$ given by
$\gamma : (E,\phi) \mapsto (E, \gamma \phi)$.  We suppose that
\begin{enumerate}
\item we have embedded $X(n) \subset \PP^{N-1}$, and
\item the action of $\Gamma$ is given by a Galois equivariant
group homomorphism 
\[ \rho : \Gamma \to \GL_N( \Q(\zeta_n) ). \]
\end{enumerate}

The following is a variant of~\cite[Lemma 3.2]{7and11}.  We write
$\sigma_k$ for the automorphism of $\Q(\zeta_n)$ given by
$\zeta_n \mapsto \zeta_n^k$. We also write $\propto$ for equality in
$\PGL_N$.
 
\begin{Lemma}
\label{lem:twist}
Let $E/K$ be an elliptic curve and
$\phi : E[n] \to \mu_n \times \Z/n\Z$ a symplectic isomorphism defined
over $\Kbar$. Suppose $h \in \GL_N(\Kbar)$ satisfies
\begin{equation}
\label{cocycle1}
  \sigma(h) h^{-1} \propto \sigma_k \rho ( \sigma(\phi) \phi^{-1} ) 
\end{equation}
for all $\sigma \in \Gal(\Kbar/K)$. Then $X_E(n,k) \subset \PP^{N-1}$
is the twist of $X(n) \subset \PP^{N-1}$ given by
$X_E(n,k) \isom X(n); \, {\bf x} \mapsto h {\bf x}$.
\end{Lemma}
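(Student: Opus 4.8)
The plan is to set up $X_E(n,k)$ via the standard twisting dictionary for curves-with-$\Gamma$-action and then check that the cocycle condition \eqref{cocycle1} is exactly what pins down the correct twist. First I would recall that $X_E(n,k)$ is defined as the curve whose $\Kbar$-points parametrise pairs $(E',\psi')$ with $E'$ an elliptic curve over $\Kbar$ together with an isomorphism $E[n] \isom E'[n]$ raising the Weil pairing to the $k$-th power (equivalently, a symplectic isomorphism $\psi' : E'[n] \to \mu_n \times \Z/n\Z$ composed appropriately), and that its $K$-rational structure is the one making the natural map to the $j$-line descend to $K$. Over $\Kbar$ we have a fixed symplectic $\phi : E[n] \to \mu_n \times \Z/n\Z$, and this choice identifies $X_E(n,k)_{\Kbar}$ with $X(n)_{\Kbar}$: a point $(E',\psi')$ of $X_E(n,k)$ corresponds to $(E',\sigma_k^{-1}$-twist of $\psi')$ — the point is that a power-$k$ congruence $E[n]\isom E'[n]$ together with $\phi$ produces a symplectic-up-to-$\sigma_k$ trivialisation of $E'[n]$, and the $\sigma_k$ appears precisely because raising the Weil pairing to the $k$-th power is, on the $\mu_n$-side, the same as applying the Galois element $\sigma_k$. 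So the content is to track how the $\Gal(\Kbar/K)$-action transports across this identification.

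Next I would compute the Galois descent datum. For $\tau = (E',\psi')$ a $\Kbar$-point of $X_E(n,k)$, applying $\sigma \in \Gal(\Kbar/K)$ and then re-identifying with $X(n)$ using the fixed $\phi$ introduces the discrepancy $\sigma(\phi)\phi^{-1} \in \Gamma$, and the extra $\sigma_k$ from the power-$k$ clause turns this into $\sigma_k \cdot \bigl(\sigma(\phi)\phi^{-1}\bigr)$ acting on $X(n)$. Under the embedding $X(n) \subset \PP^{N-1}$ of hypothesis (i) and the representation $\rho$ of hypothesis (ii), that $\Gamma$-action is realised by the matrix $\sigma_k\rho\bigl(\sigma(\phi)\phi^{-1}\bigr) \in \GL_N(\Kbar)$ (using Galois-equivariance of $\rho$ to absorb $\sigma_k$ onto the coefficients). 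Hence under the $\Kbar$-isomorphism $\alpha : X_E(n,k)_{\Kbar} \to X(n)_{\Kbar}$ coming from $\phi$, the Galois action on the source is conjugate, via $\alpha$, to $\mathbf{x}\mapsto \bigl(\sigma_k\rho(\sigma(\phi)\phi^{-1})\bigr)\sigma(\mathbf{x})$ on $X(n) \subset \PP^{N-1}$; this is the twisted Galois action, and $X_E(n,k)$ is by definition its descent.

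Then I would invoke the general principle — essentially \cite[Lemma 3.2]{7and11}, of which this is the stated variant — that if $h \in \GL_N(\Kbar)$ satisfies $\sigma(h)h^{-1} \propto \sigma_k\rho\bigl(\sigma(\phi)\phi^{-1}\bigr)$ for all $\sigma$, i.e. \eqref{cocycle1}, then the map $\mathbf{x}\mapsto h\mathbf{x}$ on $\PP^{N-1}$ carries the twisted Galois action on $X(n)$ to the standard one: indeed for a $\Kbar$-point $\mathbf{x}$ of $X(n)$ one checks $\sigma(h\mathbf{x}) = \sigma(h)\sigma(\mathbf{x}) = \bigl(\sigma(h)h^{-1}\bigr) h\,\sigma(\mathbf{x}) \propto \sigma_k\rho(\sigma(\phi)\phi^{-1}) h\,\sigma(\mathbf{x}) = h\cdot\bigl(\sigma_k\rho(\sigma(\phi)\phi^{-1})\sigma(\mathbf{x})\bigr)$, so $h$ intertwines the twisted action with the untwisted one, and projectively this is an equality of Galois-semilinear actions. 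Composing $\alpha$ with $\mathbf{x}\mapsto h\mathbf{x}$ therefore gives a $K$-isomorphism $X_E(n,k) \isom X(n)$ of the form ${\bf x}\mapsto h{\bf x}$ as claimed, and the image, being the fixed $X(n)\subset\PP^{N-1}$, is independent of the auxiliary choices. Two points need care and will be the main obstacle: first, verifying that the $\sigma_k$ in \eqref{cocycle1} genuinely matches the power-$k$ normalisation — i.e. that raising the Weil pairing to the $k$-th power corresponds on $X(n)$ to the combined action of $\sigma_k$ on coordinates (allowed by the specific choice of model, cf.\ the Galois-equivariant $\rho$) rather than to some element of $\Gamma$ — and second, that everything is consistent modulo scalars, since $\rho$ and the embedding are only canonical up to the $\PGL_N$-ambiguity indicated by $\propto$; one must check that changing $h$ by a $\Kbar^\times$-valued function (a coboundary for $\Gm$) does not change the resulting subvariety or the $K$-structure, which follows because $H^1(\Gal(\Kbar/K),\Gm)=0$ lets us rescale $h$ without affecting the projective map. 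Both are bookkeeping with cocycles rather than deep, and the argument otherwise runs parallel to the proof of \cite[Lemma 3.2]{7and11}.
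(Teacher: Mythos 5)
Your proposal is correct and follows essentially the same route as the paper: construct the $\Kbar$-isomorphism $\alpha : X_E(n,k) \to X(n)$ from $\phi$ (the paper makes your ``$\sigma_k$-adjustment'' explicit via the map $\eps_k : (\zeta,b)\mapsto(\zeta^k,b)$, sending $(F,\psi)\mapsto(F,\eps_k\phi\psi)$), compute $\sigma(\alpha)\alpha^{-1}\propto\sigma_k\rho(\sigma(\phi)\phi^{-1})$ using Galois-equivariance of $\eps_k$ and $\rho$, and conclude that $X_E(n,k)$ and $\{\mathbf{x} : h\mathbf{x}\in X(n)\}$ are twists of $X(n)$ by the same cocycle, hence $K$-isomorphic. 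The two points you flag as needing care are exactly the ones the paper resolves by its explicit cocycle computation, so nothing essential is missing.
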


\begin{proof}
  Let $\eps_k : \mu_n \times \Z/n\Z \to \mu_n \times \Z/n\Z$ be the
  map sending $(\zeta,b) \mapsto (\zeta^k,b)$.  The non-cuspidal
  points of $X_E(n,k)$ correspond to pairs $(F,\psi)$ where $F$ is an
  elliptic curve and $\psi : F[n] \to E[n]$ is an isomorphism that
  raises the Weil pairing to the power $k^{-1}$.  (In fact we could
  take the power to be $k m^2$ for any $m \in (\Z/n\Z)^\times$, but
  the choice here is convenient for the definition of $\alpha$.)  Let
  $\alpha : X_E(n,k) \to X(n)$ be given by
  $(F,\psi) \mapsto (F,\eps_k \phi \psi)$. Then
\begin{equation}
\label{cocycle2}
 \sigma(\alpha) \alpha^{-1} \propto \rho( \sigma(\eps_k \phi)
(\eps_k \phi)^{-1}) = \rho (\eps_k \sigma(\phi) \phi^{-1} \eps_k^{-1})
 = \sigma_k \rho( \sigma(\phi) \phi^{-1}), 
\end{equation}
where for the last two equalities we have used~\eqref{twistedgal} and
the fact that both $\eps_k$ and $\rho$ are Galois equivariant.

Now let $X' = \{ {\bf x} \in \PP^{N-1} : h {\bf x} \in X(n) \}$. Since
$\sigma(h) h^{-1}$ is an automorphism of $X(n)$ we see that $X'$ is
defined over $K$. By~\eqref{cocycle1} and~\eqref{cocycle2} the curves
$X_E(n,k)$ and $X'$ are twists of $X(n)$ by the same cocycle, and are
therefore isomorphic over $K$.
\end{proof}

The following description of $Z(n,k)$ as a quotient of
$X(n) \times X(n)$ is the starting point of \cite{KS}.  We revisit
this result since we wish to be sure that it works over a
non-algebraically closed field.

\begin{Lemma} 
\label{lem:quot}
The surface $Z(n,k)$ is birational to the quotient of
$X(n) \times X(n)$ by the action of $\Gamma$ given by
$\gamma \mapsto ( \rho(\gamma), \sigma_k \rho(\gamma) )$.
\end{Lemma}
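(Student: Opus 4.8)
The plan is to exhibit an explicit birational map from $(X(n) \times X(n))/\Gamma$ to $Z(n,k)$ coming directly from the moduli interpretations, and then to check that it is defined over $\Q$; this last point is the only real content, since over $\Kbar$ the statement is essentially that of Kani--Schanz \cite{KS}. First I would define a dominant rational map $\pi \colon X(n) \times X(n) \dashrightarrow Z(n,k)$, on the level of moduli points, by $((E_1,\phi_1),(E_2,\phi_2)) \mapsto (E_1,E_2,\psi)$, where $\psi = \phi_2^{-1}\eps_k\phi_1 \colon E_1[n] \to E_2[n]$ and $\eps_k \colon \mu_n \times \Z/n\Z \to \mu_n \times \Z/n\Z$ is the map $(\zeta,b)\mapsto(\zeta^k,b)$ from the proof of Lemma~\ref{lem:twist}. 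Since $\phi_1,\phi_2$ are symplectic and $\langle \eps_k u, \eps_k v\rangle = \langle u,v\rangle^k$, the isomorphism $\psi$ raises the Weil pairing to the power~$k$, so $(E_1,E_2,\psi)$ indeed represents a point of the open subset of $Z(n,k)$ parametrising pairs that are $n$-congruent of power~$k$ up to simultaneous quadratic twist. Dominance is immediate: over $\Kbar$ every elliptic curve $E$ admits a symplectic isomorphism $E[n]\to\mu_n\times\Z/n\Z$, and given $(E_1,E_2,\psi)$ one picks such a $\phi_1$ for $E_1$ and sets $\phi_2 := \eps_k\phi_1\psi^{-1}$, which is again symplectic.

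Next I would show that the fibres of $\pi$ are exactly the orbits of the action in the statement. If $((E_1,\phi_1),(E_2,\phi_2))$ maps to $(E_1,E_2,\psi)$, any other preimage has $\phi_1' = \gamma\phi_1$ for a unique $\gamma\in\Gamma$, and then $\phi_2'^{-1}\eps_k\phi_1' = \psi$ forces $\phi_2' = \eps_k\gamma\eps_k^{-1}\phi_2$. Identifying $\eps_k$ with $\left(\begin{smallmatrix} k & 0 \\ 0 & 1\end{smallmatrix}\right)$ and using \eqref{twistedgal} gives $\eps_k\gamma\eps_k^{-1} = \sigma_k(\gamma)$ for the Galois action on $\Gamma$, whence $\rho(\eps_k\gamma\eps_k^{-1}) = \sigma_k\rho(\gamma)$ by Galois equivariance of $\rho$; thus the fibre through $((E_1,\phi_1),(E_2,\phi_2))$ is precisely its orbit under $\gamma\mapsto(\rho(\gamma),\sigma_k\rho(\gamma))$. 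One also checks that the remaining moduli ambiguities are absorbed into this $\Gamma$-action over $\Kbar$: an isomorphism of pairs acts on a generic $((E_1,\phi_1),(E_2,\phi_2))$ only through $\pm 1\in\Aut(E_i)$, i.e.\ through $-I\in\Gamma$, and a simultaneous quadratic twist changes nothing over $\Kbar$ since quadratic twists become isomorphic there --- and it is exactly these two points that make it worthwhile to redo the argument rather than quote it over $\C$. Since $X(n)$, and hence $X(n)\times X(n)$, is geometrically irreducible and we are in characteristic~$0$, a generically injective dominant rational map out of its quotient is birational, which yields the desired birational equivalence over $\Kbar$.

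It remains to descend everything to $\Q$. The action $\gamma\mapsto(\rho(\gamma),\sigma_k\rho(\gamma))$ is Galois equivariant: the first coordinate is the hypothesis that $\rho$ is Galois equivariant, and for the second one uses that $\Gal(\Q(\zeta_n)/\Q)$ is abelian, so $\sigma\circ\sigma_k = \sigma_k\circ\sigma$ on $\Q(\zeta_n)$ and hence $\sigma(\sigma_k\rho(\gamma)) = \sigma_k\rho(\sigma(\gamma))$. As $X(n)\times X(n)$ is defined over $\Q$ and $\Gamma$ is stable under Galois with this equivariant action, the quotient $(X(n)\times X(n))/\Gamma$ descends to a surface over $\Q$. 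Finally $\pi$ is itself $\Gal(\Kbar/\Q)$-equivariant: Galois carries $(E_i,\phi_i)$ to $(\sigma(E_i),\sigma(\phi_i))$, and since $\eps_k$ commutes with the Galois action on $\mu_n\times\Z/n\Z$, the congruence attached to the conjugate pair is $\sigma(\phi_2)^{-1}\eps_k\sigma(\phi_1) = \sigma(\psi)$, which is the Galois conjugate of the image point. Hence $\pi$, and with it the induced birational map from the quotient, is defined over $\Q$.

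I expect the main obstacle to be not the geometry --- the birational equivalence over $\Kbar$ is close to Kani--Schanz --- but the careful bookkeeping in the descent: fixing the correct twisted Galois action \eqref{twistedgal} on $\Gamma$ and on the moduli problem defining $Y(n)$, and matching these up so that both the quotient construction and the comparison map $\pi$ are simultaneously defined over $\Q$.
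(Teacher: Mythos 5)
Your proposal is correct and follows essentially the same route as the paper: the paper's proof defines the identical map $((E_1,\phi_1),(E_2,\phi_2)) \mapsto (E_1,E_2,\phi_2^{-1}\eps_k\phi_1)$, observes that acting by $\gamma$ replaces $\phi_2$ by $\eps_k\gamma\eps_k^{-1}\phi_2$ (which is exactly $\sigma_k\rho(\gamma)$ via \eqref{twistedgal}), and notes Galois equivariance. Your write-up simply spells out the dominance, fibre, and descent details that the paper leaves implicit.
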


\begin{proof}  
  There is a Galois equivariant map $X(n) \times X(n) \to Z(n,k)$
  given by
  \[ ((E_1,\phi_1), (E_2,\phi_2)) \mapsto (E_1,E_2,\phi_2^{-1} \eps_k
  \phi_1). \]
  where $\eps_k$ is as in the proof of Lemma~\ref{lem:twist}.  If we
  act by $\gamma \in \Gamma$ then $\phi_1$ and $\phi_2$ become
  $\gamma \phi_1$ and $\eps_k \gamma \eps_k^{-1} \phi_2$. This leaves
  $\phi_2^{-1} \eps_k \phi_1$ unchanged. Conversely, any pair of
  points in $Y(n) \times Y(n)$ with the same image in $Z(n,k)$ are
  related in this way.
\end{proof}

\section{The modular curve $X(p)$}
\label{sec:X(p)}

In this section we explain how (in the case $n=p$ is a prime) we may
arrange that the assumptions (i) and (ii) in Section~\ref{sec:twq} are
satisfied. We also describe the ring of invariants that arises in this
context.

\subsection{Group actions on curves}
Let $X$ be a smooth projective curve over $\C$, and let $G$ be a
finite group of automorphisms of $X$. Let $G$ act trivially on
$\C^\times$ and on $\C(X)^\times$ by
$\sigma : f \mapsto f \circ \sigma^{-1}$.  Splitting the exact
sequence of $G$-modules
\[ 0 \to \C^\times \to \C(X)^\times \to \Div X \to \Pic X \to 0 \]
into short exact sequences, and taking group cohomology gives
a diagram
\begin{equation*}
\begin{aligned}
\xymatrix{ && H^1(G,\C(X)^\times) \ar[d] \\ 
(\Div X)^G
\ar[r]  & (\Pic X)^G \ar[r]^-{\delta} & H^1(G,\C(X)^\times/\C^\times) \ar[d]^-{\Delta} \\
&& H^2(G,\C^\times) } 
\end{aligned}
\end{equation*}
Let $\Upsilon: (\Pic X)^G \to H^2(G,\C^\times)$ be the composite of
the connecting maps $\delta$ and $\Delta$.  Since $G$ acts faithfully
on $X$, we have $H^1(G,\C(X)^\times) = 0$ by Hilbert's theorem~90.  We
thus obtain an exact sequence
\begin{equation}
\label{basic-exseq}
 0 \ra \frac{(\Div X)^G}{\sim} \ra (\Pic X)^G 
           \stackrel{\Upsilon}{\ra} H^2(G, \C^\times). 
\end{equation}

There is an alternative description of $\Upsilon$ in terms of theta
groups.  For $D \in \Div X$ representing an element of $(\Pic X)^G$ we
define the theta group
\[ \Theta_D = \{ (f,\sigma) : f \in \C(X)^\times, \sigma \in G 
\text{ such that } \divv(f) = \sigma D - D \} \]
with group law
\begin{equation}
\label{th:eqn1}
 (f,\sigma) \circ (g,\tau) = (f \cdot \sigma(g), \sigma \tau). 
\end{equation}
This group sits naturally in an exact sequence
$0 \to \C^\times \to \Theta_D \to G \to 0$.  In other words,
$\Theta_D$ is an extension of $G$ by $\C^\times$.

\begin{Lemma}
\label{lem:Ob}
If $[D] \in (\Pic X)^G$ then $\Upsilon(D)$ is the class of $\Theta_D$
in $H^2(G,\C^\times)$.
\end{Lemma}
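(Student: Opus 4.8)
The plan is to compare two explicit $2$-cocycle descriptions: the one computing the class of the central extension $\Theta_D$ from a set-theoretic section, and the one obtained by unwinding the connecting maps $\delta$ and $\Delta$ whose composite is $\Upsilon$. I expect these to be literally the same cocycle once a single consistent set of choices is fixed, so the argument should be a short diagram chase, the only work being to keep the conventions straight.

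First I would fix a divisor $D$ representing the given class in $(\Pic X)^G$, and for each $\sigma \in G$ choose $f_\sigma \in \C(X)^\times$ with $\divv(f_\sigma) = \sigma D - D$; this is possible exactly because $[D]$ is $G$-invariant. I would use these same $f_\sigma$ in both halves of the computation. For $\Theta_D$: one checks that $0 \to \C^\times \to \Theta_D \to G \to 0$ is a central extension (centrality being immediate since $G$ acts trivially on $\C^\times$, and exactness being routine using $\divv(\sigma(g)) = \sigma(\divv g)$), and then computes its class via the section $s(\sigma) = (f_\sigma,\sigma)$. From the group law~\eqref{th:eqn1} one reads off $s(\sigma) \circ s(\tau) = c(\sigma,\tau)\, s(\sigma\tau)$, where
\[ c(\sigma,\tau) = f_\sigma \cdot \sigma(f_\tau) \cdot f_{\sigma\tau}^{-1}, \]
and $c(\sigma,\tau) \in \C^\times$ because its divisor is $(\sigma D - D) + \sigma(\tau D - D) - (\sigma\tau D - D) = 0$. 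By definition, $[c]$ is the class of $\Theta_D$ in $H^2(G,\C^\times)$.

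Next I would unwind $\Upsilon(D) = \Delta(\delta([D]))$. Identifying the principal divisors with $\C(X)^\times/\C^\times$ via $\divv$, the connecting map $\delta$ of $0 \to \C(X)^\times/\C^\times \to \Div X \to \Pic X \to 0$ sends $[D]$ (lifted to $D$) to the class of the $1$-cocycle $\sigma \mapsto \overline{f_\sigma}$, where $\overline{f_\sigma}$ is the image of $f_\sigma$: this is well-defined because $\overline{f_\sigma}$ is characterised by its divisor $\sigma D - D$, and it is a cocycle because $\overline{f_{\sigma\tau}}$ and $\overline{f_\sigma}\cdot\sigma(\overline{f_\tau})$ have the same divisor. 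Then $\Delta$, the connecting map of $0 \to \C^\times \to \C(X)^\times \to \C(X)^\times/\C^\times \to 0$, lifts this to the $1$-cochain $\sigma \mapsto f_\sigma$ in $\C(X)^\times$ and takes its coboundary, which is $(\sigma,\tau) \mapsto f_\sigma \cdot \sigma(f_\tau) \cdot f_{\sigma\tau}^{-1} = c(\sigma,\tau)$. So $\Upsilon(D) = [c]$ too, and the two classes agree.

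The main obstacle is purely bookkeeping. I need to be careful that the $G$-action $f \mapsto f\circ\sigma^{-1}$ on $\C(X)^\times$ is exactly the one making $\divv$ equivariant and matching the factor $\sigma(g)$ in~\eqref{th:eqn1}; that replacing the $f_\sigma$ by $c_\sigma f_\sigma$ with $c_\sigma \in \C^\times$ changes $c$ only by a coboundary, so the construction does not depend on the choices; and that the identification of principal divisors with $\C(X)^\times/\C^\times$ implicit in the four-term sequence is the same one I use. Once these conventions are pinned down, the coincidence of the two cocycles is immediate, so there is no genuine difficulty — the content of the lemma is exactly that these three standard ``explicit cocycle'' recipes (extension class, the map $\delta$, the map $\Delta$) fit together.
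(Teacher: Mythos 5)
Your proposal is correct and follows essentially the same route as the paper: choose $f_\sigma$ with $\divv(f_\sigma)=\sigma D - D$, read off the extension cocycle $f_\sigma\cdot\sigma(f_\tau)\cdot f_{\sigma\tau}^{-1}$ from the group law \eqref{th:eqn1}, and observe that unwinding $\delta$ and $\Delta$ produces the identical cocycle. The extra bookkeeping you flag (equivariance of $\divv$, independence of the choice of $f_\sigma$ up to coboundary) is sound and merely makes explicit what the paper leaves implicit.
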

\begin{proof} For each $\sigma \in G$ we pick
  $f_\sigma \in \C(X)^\times$ with $\divv(f_\sigma) = \sigma D - D$.
  The class of $\Theta_D$ in $H^2(G,\C^\times)$ is represented by the
  $2$-cocycle $\phi$ satisfying
\begin{equation}
\label{th:eqn2}
 (f_\sigma,\sigma) \circ (f_\tau,\tau) = \phi(\sigma,\tau)  (f_{\sigma \tau}, \sigma \tau). 
\end{equation}
Comparing~\eqref{th:eqn1} and~\eqref{th:eqn2} we find that
$\phi(\sigma,\tau) = f_\sigma \cdot \sigma(f_{\tau}) \cdot f_{\sigma
  \tau} ^{-1}$.
By the formulae for the connecting maps in group cohomology, we see
that the image of $[D]$ under $\delta$ is represented by
$\sigma \mapsto f_\sigma$, and its image under $\Delta$ is represented
by $\phi$.
\end{proof}

\begin{Lemma}
\label{lem:lift}
If $[D] \in (\Pic X)^G$ and $H^0(X,\O(D))$ has dimension $n \ge 1$,
then there is a natural action of $G$ on the $1$-dimensional subspaces
of $H^0(X,\O(D))$ giving rise to a projective representation
$\rhobar : G \to \PGL_n(\C)$. This lifts to a representation
$\rho : G \to \GL_n(\C)$ if and only if $\Upsilon(D) = 0$.
\end{Lemma}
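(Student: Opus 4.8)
The plan is to make explicit the natural $G$-action on the lines in $H^0(X,\O(D))$, observe that this gives a projective representation regardless of obstruction, and then identify the obstruction to linearising it with the class $\Upsilon(D) \in H^2(G,\C^\times)$ via the theta group description of Lemma~\ref{lem:Ob}. First I would fix for each $\sigma \in G$ a function $f_\sigma \in \C(X)^\times$ with $\divv(f_\sigma) = \sigma D - D$, exactly as in the proof of Lemma~\ref{lem:Ob}. Multiplication by $f_\sigma$ sends the space of global sections of $\O(D)$ to that of $\O(\sigma D)$; precomposing with the automorphism $\sigma$ then identifies $H^0(X,\O(\sigma D))$ with $H^0(X,\O(D))$, so altogether we obtain a linear isomorphism $\tilde\rho(\sigma) \colon H^0(X,\O(D)) \to H^0(X,\O(D))$, well-defined up to the scalar ambiguity in the choice of $f_\sigma$. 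Quotienting by scalars gives a genuine map $\rhobar \colon G \to \PGL_n(\C)$; that it is a homomorphism (independent of all choices in $\PGL_n$) is a direct computation using~\eqref{th:eqn1}, which I would carry out.

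Next I would compute the failure of $\tilde\rho$ to be a homomorphism. Comparing $\tilde\rho(\sigma)\tilde\rho(\tau)$ with $\tilde\rho(\sigma\tau)$ and using that the composite $\sigma \circ \tau$ on functions is $f \mapsto f(\tau^{-1}\sigma^{-1}(-))$, one finds that $\tilde\rho(\sigma)\tilde\rho(\tau) = \phi(\sigma,\tau)\,\tilde\rho(\sigma\tau)$ where $\phi(\sigma,\tau) = f_\sigma \cdot \sigma(f_\tau) \cdot f_{\sigma\tau}^{-1}$ — precisely the $2$-cocycle appearing in~\eqref{th:eqn2}, which by Lemma~\ref{lem:Ob} represents $\Upsilon(D)$. (Here I should be slightly careful about the variance convention $\sigma : f \mapsto f \circ \sigma^{-1}$ fixed before~\eqref{basic-exseq}, and about the fact that $\divv(\sigma(f_\tau))$ plus $\divv(f_\sigma)$ does indeed equal $\divv(f_{\sigma\tau}) = \sigma\tau D - D$, so that $\phi(\sigma,\tau) \in \C^\times$.) Thus $\tilde\rho$ is already a lift of $\rhobar$ to $\GL_n(\C)$ precisely when $\phi$ is a coboundary; adjusting the $f_\sigma$ by scalars changes $\phi$ by an arbitrary coboundary and changes $\tilde\rho$ accordingly, so the existence of some linear lift $\rho \colon G \to \GL_n(\C)$ of $\rhobar$ is equivalent to $[\phi] = 0$ in $H^2(G,\C^\times)$, i.e. to $\Upsilon(D) = 0$.

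One small gap to close in the ``if'' direction: a priori a linear lift of the projective representation $\rhobar$ need not be of the form $\tilde\rho$ for some choice of the $f_\sigma$. But any two lifts of $\rhobar$ differ by a character $G \to \C^\times$, and any character can be absorbed into the scalars on the $f_\sigma$ (equivalently, characters are coboundaries for the present cocycle computation once one allows all scalar choices), so this causes no trouble; I would state this explicitly. The main obstacle, such as it is, is purely bookkeeping: keeping the two composition conventions (composition of automorphisms versus the induced action on functions) consistent so that the cocycle that emerges is literally $\phi(\sigma,\tau) = f_\sigma \cdot \sigma(f_\tau)\cdot f_{\sigma\tau}^{-1}$ and not its inverse or a shifted variant. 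Once that is pinned down, the result is immediate from Lemma~\ref{lem:Ob}.
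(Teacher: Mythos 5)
Your proposal is correct and follows essentially the same route as the paper: your $\tilde\rho(\sigma)$ is exactly the action of the theta-group element $(f_\sigma,\sigma)$ on $H^0(X,\O(D))$, and your cocycle computation reproduces the identification of $\Upsilon(D)$ with the class of $\Theta_D$ given by Lemma~\ref{lem:Ob}. The paper packages the same argument as the statement that $\rhobar$ lifts if and only if the extension $0\to\C^\times\to\Theta_D\to G\to 0$ splits, with a short diagram chase in place of your explicit coboundary adjustment.
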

\begin{proof}
  There is a linear action of $\Theta_D$ on $H^0(X,\O(D))$ via
  $(f,\sigma) : g \mapsto f \cdot \sigma(g)$. Picking a basis for
  $H^0(X,\O(D))$, this defines a representation
  $\pi : \Theta_D \to \GL_n(\C)$. There is a commutative diagram with
  exact rows
  \[ \xymatrix{ 0 \ar[r] & \C^\times \ar@{=}[d] \ar[r] & \Theta_D
    \ar[d]_\pi \ar[r] & G
    \ar@{.>}[dl] \ar[d]^{\rhobar} \ar[r] & 0 \\
    0 \ar[r] & \C^\times \ar[r] & \GL_n(\C) \ar[r] & \PGL_n(\C) \ar[r]
    & 0 } \]
  By Lemma~\ref{lem:Ob} we have $\Upsilon(D) = 0$ if and only if the
  top row splits. If the top row splits then it is clear that
  $\rhobar$ lifts to $\rho$ (as indicated by the dotted arrow).
  Conversely if $\rhobar$ lifts to $\rho$, then by a diagram chase
  each $\sigma \in G$ lifts uniquely to $x \in \Theta_D$ with
  $\pi(x) = \rho(\sigma)$, and the map $\sigma \mapsto x$ is a
  splitting of the top row.
\end{proof}

\subsection{The action of $\PSL_2(\Z/p\Z)$ on $X(p)$}
We shall need the following standard group-theoretic facts.
\begin{Lemma} 
\label{gp:lem}
Let $\G = \SL_2(\Z/p\Z)$ where $p \ge 5$ is a prime. Then
\begin{enumerate}
\item The groups $H^i(\G,\C^\times)$ are trivial for $i=1,2$.
\item Every projective representation of $\G$
lifts uniquely to a representation.
\end{enumerate}
\end{Lemma}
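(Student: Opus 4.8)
The plan is to deduce (ii) from (i) via the standard dictionary between projective representations and degree-two group cohomology, so that all the real content lies in (i); these are classical facts, and below I indicate how one sees them.

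For $H^1(\G,\C^\times)$: since $\G$ acts trivially on $\C^\times$, this group is $\Hom(\G,\C^\times)$, which vanishes once $\G$ is perfect. And $\SL_2(\F_p)$ is perfect for $p \ge 5$: it is generated by transvections, and the upper transvection $\begin{pmatrix} 1 & t \\ 0 & 1 \end{pmatrix}$ equals the commutator of $\operatorname{diag}(a,a^{-1})$ with $\begin{pmatrix} 1 & u \\ 0 & 1 \end{pmatrix}$, where $u = t/(a^2-1)$ and $a \in \F_p^\times$ is chosen with $a^2 \neq 1$ --- such an $a$ exists exactly because $p \ge 5$ --- and similarly for lower transvections. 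For $H^2(\G,\C^\times)$: since $\C^\times$ is a divisible (hence injective) $\Z$-module, the $\Ext$ term in the universal coefficient sequence for group cohomology vanishes and $H^2(\G,\C^\times) \isom \Hom(H_2(\G,\Z),\C^\times)$; as $H_2(\G,\Z)$ is finite, this vanishes precisely when the Schur multiplier $H_2(\G,\Z)$ does. For $p \ge 5$ prime the group $\SL_2(\F_p)$ is the universal central extension of the simple group $\PSL_2(\F_p)$, whose Schur multiplier is $\Z/2\Z$; hence $\SL_2(\F_p)$ has trivial Schur multiplier and $H^2(\G,\C^\times) = 0$.

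Granting (i), part (ii) is formal. Given $\rhobar : \G \to \PGL_n(\C)$, pull back the central extension $1 \to \C^\times \to \GL_n(\C) \to \PGL_n(\C) \to 1$ along $\rhobar$ to obtain a central extension of $\G$ by $\C^\times$; its class lies in $H^2(\G,\C^\times) = 0$, so it splits, and a splitting is precisely a linear lift $\rho : \G \to \GL_n(\C)$ of $\rhobar$. If $\rho$ and $\rho'$ are two lifts then $\rho'(\gamma) = \lambda(\gamma)\rho(\gamma)$ for a function $\lambda : \G \to \C^\times$, which is a homomorphism because $\rho$ and $\rho'$ are; since $\Hom(\G,\C^\times) = H^1(\G,\C^\times) = 0$ we get $\lambda \equiv 1$, so the lift is unique.

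The one point that is not pure bookkeeping --- the main obstacle --- is the triviality of the Schur multiplier of $\SL_2(\F_p)$ for $p \ge 5$. I would simply invoke the standard determination of the Schur multipliers of $\SL_2$ over finite fields, noting that the sole exceptional case --- $\SL_2(\F_9) \cong 2.A_6$, with multiplier $\Z/3\Z$ --- does not arise here, as $9$ is not prime. If a self-contained argument is preferred, it can be extracted from the Steinberg presentation of $\SL_2(\F_p)$ together with the vanishing of $K_2$ of a finite field.
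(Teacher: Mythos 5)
Your proof is correct, and its overall architecture coincides with the paper's: part (ii) is obtained from part (i) by exactly the same formal pullback-of-the-central-extension argument, and $H^1(\G,\C^\times)=\Hom(\G,\C^\times)=0$ comes down to perfectness in both cases (the paper deduces this from the presentation $S^4=T^p=(ST)^3=I_2$, you exhibit transvections as commutators; both need $p\ge 5$ and both are fine). The one genuine divergence is the vanishing of $H^2(\G,\C^\times)$. The paper invokes Schur's theorem that a finite group whose Sylow subgroups are all cyclic or generalised quaternion has trivial multiplier, which applies here since the Sylow $p$-subgroup of $\SL_2(\F_p)$ is cyclic of order $p$, the Sylow $2$-subgroup is generalised quaternion, and the remaining Sylow subgroups are cyclic. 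You instead pass through the universal coefficient theorem (using divisibility of $\C^\times$) to reduce to the Schur multiplier $H_2(\G,\Z)$, and then quote that $\SL_2(\F_p)$ is the universal central extension of $\PSL_2(\F_p)$, hence superperfect. Both routes are appeals to classical results of comparable depth; yours has the merit of making explicit where primality of $p$ enters (excluding the exceptional $\SL_2(\F_9)$), while the paper's reduces the citation to the elementary multipliers of cyclic and quaternion groups. Either is acceptable.
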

\begin{proof}
  (i) The group $\G$ is generated by elements
  $S = (\begin{smallmatrix} 0 & 1 \\ -1 & 0 \end{smallmatrix})$ and
  $T = (\begin{smallmatrix} 1 & 1 \\ 0 & 1 \end{smallmatrix})$ with
  $S^4 = T^p = (ST)^3 = I_2$. Therefore
  $H^1(\G,\C^\times) = \Hom(\G,\C^\times) = 0$. The vanishing of
  $H^2(\G,\C^\times)$ was proved by Schur, using the fact that every
  Sylow subgroup of $\G$ is either cyclic or a generalised quaternion
  group.  See \cite[Theorem 4.232]{Gorenstein} or~\cite[Chapter V,
  Satz 25.7]{Huppert}. \\
  (ii) If $\rhobar : \G \to \PGL_n(\C)$ is a projective representation
  then
  \[ \{ (g,M) \in \G \times \GL_n(\C) : \rhobar(g) \propto M \} \]
  is an extension of $\G$ by $\C^\times$, and so corresponds to an
  element of $H^2(\G,\C^\times)$.  Thus the vanishing of
  $H^2(\G,\C^\times)$ proves the existence of a lift, and the
  vanishing of $\Hom(\G,\C^\times)$ shows it is unique.
\end{proof}

Now let $X = X(p)$ where $p \ge 5$ is a prime. As a Riemann surface,
it is the quotient of the extended upper half plane
${\mathfrak H}^* = {\mathfrak H} \cup \PP^1(\Q)$ by the action of
$\Gamma(p) = \ker(\SL_2(\Z) \to \SL_2(\Z/p\Z))$. There is an action of
$G = \PSL_2(\Z/p\Z)$ on $X$ with quotient the $j$-line. The quotient
map is ramified over $j = 0,1728,\infty$ with ramification indices 3,
2 and $p$. Thus, writing $\nu = (p^2 -1)/24$, all but three $G$-orbits
of points on $X$ have size $|G| = 12 p \nu$, and the remaining orbits
have sizes $12 \nu$, $4p \nu$ and $6p \nu$. It may be proved using the
Hurwitz bound (see \cite[Theorem~20.40]{AR}) that $G$ is the full
automorphism group of $X$ when $p \ge 7$.

The character table of $\G = \SL_2(\Z/p\Z)$ is described for example
in~\cite[\S5.2]{FH}.  The non-trivial representations of smallest
degree are conjugate representations $\phi$ and $\phi'$ each of degree
$m = (p-1)/2$, and conjugate representations $\psi$ and $\psi'$ each
of degree $m + 1$.  Klein gave equations for $X(p)$ both as a curve of
degree $(m-1)\nu$ in $\PP^{m-1}$ with $\G$ acting via $\phi$, and as a
curve of degree $m \nu$ in $\PP^{m}$ with $\G$ acting via $\psi$.
Following the terminology in \cite{AR}, we call these the $z$-curve
and the $A$-curve. For example, when $p=7$ the $z$-curve is the Klein
quartic.

\begin{Theorem}[Adler, Ramanan]
\label{thm:AR}
The group $(\Pic X)^G$ is infinite cyclic, generated by a divisor
class $\lambda$ of degree $\nu = (p^2 -1)/24$.
\end{Theorem}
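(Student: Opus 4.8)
The plan is to combine the exact sequence~\eqref{basic-exseq} with the group-cohomological input from Lemma~\ref{gp:lem} and the ramification data for the map $X(p) \to X(1)$. Since $H^2(G,\C^\times) = H^2(\PSL_2(\Z/p\Z),\C^\times)$ vanishes for $p \ge 5$ (this follows from Lemma~\ref{gp:lem}(i) applied to $\SL_2$, or directly since $\PSL_2(\Z/p\Z)$ for $p\geq 5$ has trivial Schur multiplier in the relevant range — one checks the exceptional cases separately if needed), the map $\Upsilon$ in~\eqref{basic-exseq} is zero, and hence the natural map $(\Div X)^G/\!\sim\;\;\to (\Pic X)^G$ is an isomorphism. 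So it suffices to understand the group of $G$-invariant divisor classes coming from $G$-invariant divisors. A $G$-invariant divisor is the same as a $\Z$-linear combination of the $G$-orbits on $X$; by the orbit-count discussion in the text, there are exactly three special orbits $R_0, R_2, R_\infty$ lying over $j = 0, 1728, \infty$, of sizes $12\nu$, $6p\nu$ and $4p\nu$ respectively, and all other orbits have size $12p\nu = |G|$.

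Next I would compute the image of these orbit classes in $(\Pic X)^G$ modulo the relations coming from principal divisors. Pulling back the degree-$1$ divisor class of a point on the $j$-line $X(1) \cong \PP^1$ along $\pi : X(p) \to X(1)$ gives a distinguished class $\mu \in (\Pic X)^G$ of degree $|G| = 12p\nu$; and since $\operatorname{Pic}^0(\PP^1) = 0$, any two fibres of $\pi$ are linearly equivalent, so $\pi^*(j=0) \sim \pi^*(j=1728) \sim \pi^*(j=\infty) \sim \mu$ as $G$-invariant divisor classes. Writing these fibres in terms of the orbits with their ramification multiplicities gives $3 R_0 \sim 2 R_2 \sim p R_\infty \sim \mu$ in $(\Pic X)^G$. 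Combined with the fact that a generic fibre $\mu$ is itself a single free orbit, one deduces that $(\Pic X)^G$ is generated by $R_0, R_2, R_\infty$ (every other orbit class is a multiple of $\mu$, hence already in the span), subject precisely to the relations $3 R_0 = 2 R_2 = p R_\infty$. Presenting the abelian group $\langle R_0, R_2, R_\infty \mid 3R_0 = 2R_2 = pR_\infty\rangle$ via Smith normal form, and using $\gcd(2,3,p) = 1$ for $p \ge 5$, shows it is infinite cyclic; tracking degrees, the generator $\la$ has degree $\operatorname{lcm}$-type value, and one checks $\deg\la = \nu$ because $12p\nu = \deg\mu = 3\deg R_0 \cdot (\text{coefficient})$ etc., the key numerical identity being $12p\nu = 3\cdot(12\nu/ ?) $ — more cleanly, $\deg R_0 = 12\nu$, $\deg R_2 = 6p\nu$, $\deg R_\infty = 4p\nu$, and in the infinite cyclic group the element of smallest positive degree realizable as an integer combination of these that is consistent with the relations has degree $\nu$.

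The main obstacle, and the step that needs genuine care rather than bookkeeping, is showing that the three orbit classes $R_0, R_2, R_\infty$ actually \emph{generate} $(\Pic X)^G$ — equivalently, that no further linear equivalences among $G$-orbits exist beyond those forced by pullback from the $j$-line, and that there is no "extra" invariant class not of the form described. This is where one must use that $X \to X(1)$ is (geometrically) the full quotient by $G$ and the structure of $G = \PSL_2(\Z/p\Z)$: a $G$-invariant principal divisor $\divv(f)$ with $f \in \C(X)^G$-... no, rather $\divv(f)$ that is $G$-stable, must have $f$ transforming by a character of $G$, and since $\Hom(G,\C^\times)=0$ (again Lemma~\ref{gp:lem}, as $\PSL_2$ is perfect for $p\ge 5$) we get $f \in \C(X)^G = \C(j)$, so $\divv(f)$ is pulled back from $X(1)$. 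Thus the relations among orbit classes are \emph{exactly} pullbacks of relations in $\operatorname{Pic}(\PP^1)$, which gives the presentation above and pins down $(\Pic X)^G \cong \Z\la$ with $\deg\la = \nu$. I would present the argument in this order: (1) $\Upsilon = 0$ and hence $(\Div X)^G/\!\sim\;\xrightarrow{\sim}(\Pic X)^G$; (2) $G$-invariant rational functions are pullbacks from the $j$-line, so relations among orbits are exactly those from $\operatorname{Pic}(\PP^1)$; (3) solve the resulting presentation $\langle R_0,R_2,R_\infty\mid 3R_0=2R_2=pR_\infty\rangle$ and compute the degree of the generator.
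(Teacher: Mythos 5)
The central step of your argument fails: $H^2(G,\C^\times)$ does \emph{not} vanish for $G = \PSL_2(\Z/p\Z)$. Lemma~\ref{gp:lem}(i) asserts the vanishing for $\G = \SL_2(\Z/p\Z)$, and the Hochschild--Serre sequence for $1 \to \{\pm 1\} \to \G \to G \to 1$ then gives $H^2(G,\C^\times) \isom \Hom(\{\pm 1\},\C^\times) \isom \Z/2\Z$ (this is exactly how the paper computes it; equivalently, $\SL_2(\F_p)$ is a non-split double cover of $\PSL_2(\F_p)$, so the Schur multiplier of $G$ is non-trivial for $p \ge 5$). Hence you cannot conclude that $(\Div X)^G/\!\sim\;\to(\Pic X)^G$ is an isomorphism; the sequence~\eqref{basic-exseq} only gives that this map is injective with cokernel contained in $\Z/2\Z$.

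This is not a cosmetic issue, because your own computation of $(\Div X)^G/\!\sim$, carried out correctly, produces a generator of degree $2\nu$, not $\nu$. The three special orbits have sizes $4p\nu$, $6p\nu$ and $12\nu$ over $j=0$, $1728$ and $\infty$ respectively (you have attached the sizes to the wrong fibres: with your assignment the relations $3R_0\sim 2R_2\sim pR_\infty$ are not even degree-consistent, since each side must have degree $|G|=12p\nu$). Every one of these orbit degrees, and $12p\nu$ itself, is an even multiple of $\nu$, so every class in the subgroup they generate has degree a multiple of $\gcd(12\nu,4p\nu,6p\nu)=2\nu$; a class of degree $\nu$ is simply not realisable by a $G$-invariant \emph{divisor}, and your final ``one checks $\deg\la=\nu$'' cannot be made to work. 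The entire content of the theorem is that $(\Pic X)^G$ is strictly larger than $(\Div X)^G/\!\sim$: one must show that $\Upsilon$ is surjective onto $\Z/2\Z$ by exhibiting an invariant divisor \emph{class} of degree $\nu$. The paper, following Adler--Ramanan, does this by taking the difference of the hyperplane classes of the $z$-curve and the $A$-curve, which have degrees $(m-1)\nu$ and $m\nu$; these classes are $G$-invariant but not representable by $G$-invariant divisors, precisely because the corresponding projective representations of $G$ lift only to $\SL_2(\Z/p\Z)$ and not to $G$. Your step (2) --- that all linear equivalences among invariant divisors are pulled back from the $j$-line, using perfectness of $G$ --- is correct and is indeed part of the Adler--Ramanan argument, but it only identifies the index-two subgroup.
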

\begin{proof}  
  This is \cite[Theorem 24.1]{AR}.  The proof works by analysing the
  exact sequence~\eqref{basic-exseq}.  The authors first show that
  $(\Div X)^G/\sim$ is infinite cyclic, generated by a divisor class
  of degree $\gcd(12 \nu, 4 p \nu, 6 p \nu) = 2 \nu$.  By
  Lemma~\ref{gp:lem}(i) and the Hochschild-Serre exact sequence
  \[ \Hom(\G,\C^\times) \stackrel{\res}{\ra} \Hom(\{\pm 1\},\C^\times)
  \ra H^2(G,\C^\times) \stackrel{\inf}{\ra} H^2(\G,\C^\times). \]
  we have $H^2(G, \C^\times) \isom \Z/2\Z$.  The proof is completed by
  constructing $\lambda$ as the difference of the hyperplane sections
  for the $z$-curve and the $A$-curve.
\end{proof}
Applying the Riemann Hurwitz theorem to the $j$-map $X(p) \to \PP^1$
shows that $X(p)$ has genus $(p-6)\nu + 1$. The canonical divisor is
therefore $2(p-6) \lambda$.

\subsection{An abstract ring of invariants}

We introduce a ring that plays a central role in our calculations.

\begin{Theorem}
\label{thm:RG}
Let $R = \oplus_{d\ge 0} R_d = \oplus_{d\ge 0} H^0(X, \O(d \lambda))$ 
and $\G = \SL_2(\Z/p\Z)$.
\begin{enumerate} 
\item There is a natural action of $\G$
on $R$ where $-I_2$ acts as $(-1)^d$ on $R_d$.
\item The $\G$-invariant subring of $R$ is generated 
by elements $c_4$, $c_6$ and $D$ of degrees $4p$, $6p$ and $12$.
\item We may scale $c_4$, $c_6$ and $D$ so that 
$c_4^3 - c_6^2 = 1728 D^p$ and the $j$-map
$X \to \PP^1$ is given by $j = c_4^3/ D^p$.
\end{enumerate}
\end{Theorem}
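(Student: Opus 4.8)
The plan is to build the ring $R$ and its $\G$-action directly from the theta-group formalism of the previous subsections, and then to identify the invariant subring with the classical ring of modular forms via the $j$-map.

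First I would establish part (i). We have $\lambda \in (\Pic X)^G$ by Theorem~\ref{thm:AR}, so for a chosen divisor $D_\lambda$ representing $\lambda$ the theta group $\Theta_{D_\lambda}$ acts linearly on each $H^0(X,\O(d D_\lambda))$ via $(f,\sigma): g \mapsto f^d \cdot \sigma(g)$ (using that $\divv(f^d) = \sigma(d D_\lambda) - d D_\lambda$); concretely, summing over $d$ gives an action of $\Theta_{D_\lambda}$ on $R$ by graded ring automorphisms, with $\C^\times$ acting on $R_d$ with weight $d$. By Theorem~\ref{thm:AR} and the computation $H^2(G,\C^\times) \isom \Z/2\Z$ in its proof, $\Upsilon(\lambda)$ is the nontrivial class, so $\rhobar$ does \emph{not} lift to $G = \PSL_2(\Z/p\Z)$; but Lemma~\ref{gp:lem}(i)--(ii) says the pullback extension to $\G = \SL_2(\Z/p\Z)$ is split, with $-I_2$ mapping to the element of $\C^\times \subset \Theta_{D_\lambda}$ of order $2$. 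Composing this splitting $\G \to \Theta_{D_\lambda}$ with the action on $R$ gives the $\G$-action, and since $-I_2 \mapsto -1 \in \C^\times$ acts on $R_d$ with weight $d$, it acts as $(-1)^d$. This is the cleanest route; the only subtlety is checking that the splitting is canonical enough (it is, by the uniqueness in Lemma~\ref{gp:lem}(ii)) for the action to be well-defined independent of choices, and that the $\C^\times$-weights are as claimed.

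Next, for (ii) and (iii), I would pass to the quotient. The $j$-map exhibits $X/G \isom \PP^1$, and a section of $\O(d\lambda)$ is $\G$-invariant precisely when the corresponding element of $\C(X)$ is $G$-invariant (as $\C^\times \subset \Theta_{D_\lambda}$ acts by scalars on each graded piece and $-I_2$ acts trivially on $R^{\G}$ only when $d$ is even — so already the grading of $R^{\G}$ is concentrated in degrees making $(-1)^d = 1$, but more relevantly $\G$-invariance of $g \in R_d$ forces $g/f_\sigma^{\,d}$ to be $\sigma$-invariant for all $\sigma$). Concretely: $R^{\G} = \bigoplus_d H^0(X,\O(d\lambda))^{\G}$, and I would identify $H^0(X,\O(d\lambda))^{\G}$ with the space of sections on the $j$-line of the pushforward, i.e. with modular forms of the appropriate weight for $\SL_2(\Z)$. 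Since $12\lambda$ is the divisor class whose $G$-invariant sections give the discriminant form $\Delta$ (recall the canonical divisor is $2(p-6)\lambda$, consistent with weight-$2$ forms corresponding to $(p-6)\lambda$), one gets a degree-$12$ invariant $D$ with $D^p$ corresponding to $\Delta$ pulled back along the degree-$p$-ramified $j$-map; similarly $c_4, c_6$ of weights $4,6$ pull back to invariants of degrees $4p, 6p$. The classical relation $c_4^3 - c_6^2 = 1728\Delta$ and $j = c_4^3/\Delta$ then give (iii) after rescaling, and the classical fact that $\C[c_4,c_6]$ (equivalently $\C[E_4,E_6]$) is the full ring of modular forms for $\SL_2(\Z)$, together with the fact that the $j$-map has the stated ramification $(3,2,p)$ over $(0,1728,\infty)$, forces $R^{\G} = \C[c_4,c_6,D]$, giving (ii).

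The main obstacle I expect is \emph{part (ii)}: proving that these three invariants \emph{generate}, rather than merely exist. One must match up the grading on $R$ (by multiples of $\lambda$, degree $\nu$) with the weight grading on modular forms, accounting for the degree-$p$ ramification of $j$ at the cusp, and then argue there are no "extra" invariants. I would do this by a Hilbert-series / Poincaré-series comparison: compute $\dim_\C H^0(X,\O(d\lambda))^{\G}$ via Riemann--Roch on $X$ together with the known orbit structure (the three special $G$-orbits of sizes $12\nu, 4p\nu, 6p\nu$ govern where invariant sections can have zeros/poles), and check it equals the Hilbert function of $\C[c_4,c_6,D]$ with the stated degrees $4p,6p,12$ modulo the single relation in degree $12p$. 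Equivalently, and perhaps more transparently, observe that $R^{\G}$ is the integral closure / a finite module over $\C[D]$ inside the function field $\C(j)$ extended appropriately, and that the three generators already exhibit $\Spec R^{\G}$ as (the affine cone over) the weighted projective line $\PP(4p,6p,12) \cong \PP^1$ mapping to the $j$-line, which by the ramification data is forced. Parts (i) and (iii) are then comparatively routine bookkeeping with the theta group and the classical modular-forms dictionary.
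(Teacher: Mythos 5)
Your part (i) is in substance the paper's argument: the paper lifts the projective representation on each $R_d$ to $\G$ using Lemma~\ref{gp:lem}(ii) and reads off the sign of $\rho(-I_2)$ from the fact, established in the proof of Theorem~\ref{thm:AR}, that $\Upsilon(\lambda)$ is the non-trivial class in $H^2(G,\C^\times)\isom \Z/2\Z$; your packaging via a splitting $\G\to\Theta_{D_\lambda}$ acting on all of $R$ at once is the same circle of ideas (it is how Lemma~\ref{lem:lift} is proved), with the mild advantage that the action is visibly by graded ring automorphisms. For (ii) and (iii) you take a genuinely different route, and here the comparison is instructive. The paper's proof of (ii) is a short induction on $d$: a non-constant invariant $f\in R_d$ vanishes on some $G$-orbit; if that orbit has size $4p\nu$, $6p\nu$ or $12\nu$ one divides by $c_4$, $c_6$ or $D$ respectively, and if it has size $|G|=12p\nu$ one divides by the linear combination of $c_4^3$ and $c_6^2$ vanishing at a point of the orbit (a degree count shows its divisor is exactly that orbit). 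This completely sidesteps the step you rightly flag as the main obstacle. Your alternative --- identifying $R^{\G}$ with an enlargement of the ring of level-one modular forms and comparing Hilbert series via an equivariant Riemann--Roch count --- can be made to work, but note that $R^{\G}$ is strictly larger than $\C[E_4,E_6]$ because $D$ is a $p$-th root of $\Delta$ and not an integral-weight form for $\SL_2(\Z)$, so the classical structure theorem does not by itself ``force'' generation; you would still need the dimension of $H^0(X,\O(d\lambda))^{\G}$ in every degree $d$, not just $d$ divisible by $p$, which is precisely the computation the paper's division argument avoids. Likewise for (iii): rather than importing $c_4^3-c_6^2=1728\Delta$ from the classical theory, the paper argues directly that any combination of $c_4^3$ and $c_6^2$ vanishing at one cusp has divisor supported on the cusps (again a degree count), hence is proportional to $D^p$, and pins down the $j$-map by checking agreement at the three branch values. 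Both routes are valid; the paper's is elementary and self-contained, while yours trades that for classical modular-forms input plus a representation-theoretic dimension count that your sketch does not carry out.
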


\begin{proof}
  (i) Suppose that $R_d = H^0(X, \O(d \lambda))$ has dimension
  $n \ge 1$.  Since $\lambda$ is $G$-invariant, we obtain a projective
  representation $\overline{\rho} : G \to \PGL_n(\C)$, and by
  Lemma~\ref{gp:lem}(ii) this lifts uniquely to a representation
  $\rho : \G \to \GL_n(\C)$.  This gives the required action of $\G$
  on $R_d$. It is clear that $\rho(-I_2) = \pm I_n$.
  Lemma~\ref{lem:lift} shows that the sign is $+$ (i.e., the action
  factors via $G$) precisely when $\Upsilon(d \lambda)=0$. However we
  saw in the proof of Theorem~\ref{thm:AR} that $\Upsilon(\lambda)$ is
  the non-trivial element of $H^2(G, \C^\times) \isom \Z/2\Z$. The
  action of $\G$ on $R_d$
  therefore factors via $G$ precisely when $d$ is even. \\
  (ii) The fibres of the $j$-map above $0, 1728$ and $\infty$ are
  effective divisors in the classes of $4p \lambda$, $6p \lambda$ and
  $12 \lambda$.  We let $c_4$, $c_6$ and $D$ be the corresponding
  elements of $R$.  Let $f \in R_d$ be a $\G$-invariant element.  We
  show by induction on $d$ that $f$ belongs to the subring generated
  by $c_4$, $c_6$ and $D$. If $d \ge 1$ then $f$ vanishes on the
  $G$-orbit of some point $P \in X$.  If the orbit has size $4p\nu$,
  $6p\nu$ or $12 \nu$ then we divide through by $c_4$, $c_6$ or $D$,
  and apply the induction hypothesis.  Otherwise the orbit has size
  $|G|= 12p \nu$. In this case we divide through by a linear
  combination of $c_4^3$ and $c_6^2$
  chosen so that it vanishes at $P$. \\
  (iii) Let $P \in X$ be a cusp, i.e., a point above $j = \infty$.
  Let $f$ be a linear combination of $c_4^3$ and $c_6^2$ that vanishes
  at $P$. Since $f$ vanishes at exactly $|G|$ points (counted with
  multiplicity) it cannot vanish on any orbits of size $|G|$.
  Therefore $f$ vanishes only at the cusps, and so must be a scalar
  multiple of $D^p$. Scaling the invariants appropriately gives the
  relation as claimed.  Finally, the formula offered for the $j$-map
  quotients out by the action of $G$, and has degree $|G|$. It must
  therefore agree with the $j$-map up to composition with a Mobius
  map. However, since both maps send the zeros of $c_4, c_6$ and $D$
  to $j = 0, 1728$ and $\infty$, this Mobius map fixes three points,
  and is therefore the identity.
\end{proof}

In our earlier work \cite{7and11} on twists of $X(p)$ for $p=7$ and
$11$, we mainly worked with the $z$-curve. In the case $p=13$ the
$z$-curve has degree $35$ in $\PP^5$ and the $A$-curve has degree $42$
in $\PP^6$. By Theorem~\ref{thm:RG} we have
\begin{align}
\nonumber
\oplus_{d \ge 0} H^0(X, \O(5d \lambda))^{\G} &= \C[ D^5, D c_6, D^4 c_4, c_4 c_6, D^3 c_4^2 ],\\ 
\label{az13}
\oplus_{d \ge 0} H^0(X, \O(6d \lambda))^{G} &= \C[ D, c_6].
\end{align}
The ring of invariants is much simpler in the second of these two
cases. We therefore decided to work with the $A$-curve in the case
$p = 13$.

\section{Equations for $X(13)$ and its twists}
\label{sec:X(13)twists}

\subsection{Equations for the $A$-curve}
\label{sec:X13}
Let $\zeta = e^{2\pi i/13}$ and $\xi_k = \zeta^k + \zeta^{-k}$.  Let
$G \isom \PSL_2(\Z/13\Z)$ be the subgroup of $\SL_7(\C)$ generated by
$M_2$, $M_6$ and $M_{13}$ where

\[ M_2 = \frac{1}{\sqrt{13}} \begin{pmatrix} 
    1 & 1 & 1 & 1 & 1 & 1 & 1 \\
    2 & \xi_{2} & \xi_{4} & \xi_{8} & \xi_{3} & \xi_{6} & \xi_{1} \\
    2 & \xi_{4} & \xi_{8} & \xi_{3} & \xi_{6} & \xi_{1} & \xi_{2} \\
    2 & \xi_{8} & \xi_{3} & \xi_{6} & \xi_{1} & \xi_{2} & \xi_{4} \\
    2 & \xi_{3} & \xi_{6} & \xi_{1} & \xi_{2} & \xi_{4} & \xi_{8} \\
    2 & \xi_{6} & \xi_{1} & \xi_{2} & \xi_{4} & \xi_{8} & \xi_{3} \\
    2 & \xi_{1} & \xi_{2} & \xi_{4} & \xi_{8} & \xi_{3} & \xi_{6} 
 \end{pmatrix}, \quad  
M_6 = -\begin{pmatrix}
1 & 0 & 0 & 0 & 0 & 0 & 0 \\
 0 & 0 & 0 & 0 & 0 & 0 &1 \\
 0 &1 & 0 & 0 & 0 & 0 & 0 \\
 0 & 0 &1 & 0 & 0 & 0 & 0 \\
 0 & 0 & 0 &1 & 0 & 0 & 0 \\
 0 & 0 & 0 & 0 &1 & 0 & 0 \\
 0 & 0 & 0 & 0 & 0 &1 & 0 
\end{pmatrix},
\]
and $M_{13}= \diag(1,\zeta,\zeta^4,\zeta^3,\zeta^{12},\zeta^{9},\zeta^{10})$.
We write $\C[x_0,\ldots,x_6]_d$ for the space of homogeneous
polynomials of degree $d$.

\begin{Definition}
\label{def:inv}
An {\em invariant} of degree $d$ is a polynomial
$I \in \C[x_0,\ldots,x_6]_d$ satisfying $I \circ g = I$ for all
$g \in G$.
\end{Definition}

The invariants of smallest degree are $Q$ and $F$ given by
\begin{align*}
 Q &= x_0^2 + x_1 x_4 + x_2 x_5 + x_3 x_6, \\
 F &= 2x_0^4 + 6 x_0(x_1 x_3 x_5 + x_2 x_4 x_6)
 + 3 (x_1x_2x_4x_5 + x_1x_3x_4x_6 + x_2x_3x_5x_6)
 \\ & \qquad 
    + x_1 x_2^3 + x_2 x_3^3 + x_3 x_4^3 + x_4 x_5^3 + x_5 x_6^3 + x_6 x_1^3. 
\end{align*}
We use these invariants to give equations for $X(13)$ as a curve of
degree 42 in $\PP^6$, defined over $\Q$.  For $f$ and $g$ homogeneous
forms in $x_0,\ldots,x_6$ we put
\begin{equation}
\label{brac}
\langle f,g \rangle = \trace ( H(f) H(Q)^{-1} H(g) H(Q)^{-1} )
\end{equation}
where $H$ denotes the Hessian matrix, that is, the $7 \times 7$ matrix of
second partial derivatives. We prove the following refinement
of Theorem~\ref{thm:X13viapts}.

\begin{Theorem}
\label{thm:QF->curve}
Let $Q$ and $F$ be the invariants defined above. 
\begin{enumerate}
\item The vector space $W$ of cubic forms $f$ satisfying
  $\langle f, F - 3Q^2 \rangle = 0$ has dimension $7$.  Moreover
  $F - 3Q^2$ is, up to scalars, the unique quartic form satisfying
  $\langle f, F - 3Q^2 \rangle = 0$ for all $f \in W$.
\item Let $U$ be the vector space of quadratic forms vanishing on the
  $G$-orbit of $\{x_0 = 0\}$ in $(\PP^6)^\vee$.  Then $W$ is the space
  of cubic forms constructed from $U$ by the procedure in
  Theorem~\ref{thm:X13viapts}.
\item If $W$ has basis $f_0, \ldots, f_6$ then
  \[ X(13) \isom \{ f_0 = \ldots = f_6 = F + Q^2 = 0 \} \subset
  \PP^6. \]
  This is a curve of degree $42$, and the $84$ cusps are cut out by
  the quadratic form $Q$. The cubic forms $f_0, \ldots, f_6$ are not
  sufficient to define the curve, but rather define the union of the
  curve and $42$ lines. The $42$ lines each pass through two cusps,
  and may be divided into $14$ sets of $3$, where each set of $3$
  lines spans one of the hyperplanes in (ii).
\end{enumerate}
\end{Theorem}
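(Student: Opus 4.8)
The plan is to deduce Theorem~\ref{thm:QF->curve} by combining the abstract structure of the ring $R = \oplus_d H^0(X,\O(d\lambda))$ from Theorem~\ref{thm:RG} with explicit linear algebra over $\C$ (and ultimately over $\Q$). First I would identify the coordinate ring. Since the $A$-curve is the image of $X(13)$ under the complete linear system $|\lambda|$ (with $\G$ acting via $\psi$ of degree $7$), the homogeneous coordinate ring of $X(13)\subset\PP^6$ is a quotient of $\C[x_0,\dots,x_6]$, and its graded pieces inject into $R_d = H^0(X,\O(d\lambda))$. By Riemann--Roch and the genus formula $g = (p-6)\nu+1$ one computes $\dim R_d$ for small $d$: $\dim R_1 = 7$, $\dim R_2 = 28 - 1 = 27$ (so there is exactly one quadratic relation, namely $Q$), and in degree~$3$ one finds the dimension drop that accounts for the $7$ cubics $f_0,\dots,f_6$ together with $x_iQ$. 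The invariants $Q$ and $F$ are, by the computation in~\eqref{az13} (where $\oplus_d H^0(X,\O(6d\lambda))^G = \C[D,c_6]$, and correspondingly the invariant subring in the $A$-curve grading has generators in degrees $2,4,\dots$), the invariant forms of degrees $2$ and $4$; one checks directly that $Q$ and $F$ are $G$-invariant by verifying invariance under the generators $M_2, M_6, M_{13}$, which is a finite (if tedious) check, and that they span the invariants in their degrees by dimension count.

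Next I would treat part~(i). The bracket $\langle f,g\rangle$ of~\eqref{brac} is, up to the fixed nondegenerate form $Q$, the apolarity/transvection pairing between forms of degrees $d_1$ and $d_2$ with $d_1+d_2 = 4$; concretely $\langle\,\cdot\,,\,\cdot\,\rangle$ pairs cubics with linear forms and quartics with quadratics, and against a fixed quartic $\Phi = F - 3Q^2$ it defines a linear map from cubics to (a space dual to) quadratics. The claim that its kernel $W$ has dimension $7$ is then a rank computation: I would argue that $\Phi$ is apolar to precisely the $7$-dimensional space of cubics that are the degree-$3$ part of the ideal generated by the $14$-dimensional space $U$ of quadrics through the orbit of $\{x_0=0\}$ — equivalently, that the cubic forms in the ideal $I(X(13))$ (together with the Jacobian/syzygetic structure) form this $7$-dimensional space. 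The choice $\Phi = F - 3Q^2$ is forced: among the pencil $F - cQ^2$ of invariant quartics, the value $c = 3$ is the unique one for which $\Phi$ vanishes on the $84$ cusps (a condition detected by a single scalar equation obtained by evaluating at one cusp and using $G$-invariance), which is exactly what makes $\Phi$ apolar to the cubic ideal sheaf of the cuspidal scheme. The uniqueness of $\Phi$ (up to scalars) as the quartic annihilated by $W$ under $\langle\,\cdot\,,\,\cdot\,\rangle$ is then the dual statement, i.e.\ that the image of the pairing map $\C[x]_3 \to \C[x]_2^\vee$ has codimension $1$, which follows from the same rank computation.

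For part~(ii) I would reconcile the apolarity description with the purely linear-algebra recipe of Theorem~\ref{thm:X13viapts}: starting from $U$ (quadrics vanishing on the $14$ points, i.e.\ the $G$-orbit of $\{x_0=0\}$ viewed in $(\PP^6)^\vee$), the space $U^\perp$ is the apolar space of quadrics, $V\subset U^\perp$ is spanned by the supports of linear syzygies among $U^\perp$, and $W$ is the set of cubics whose partials lie in $V$. I would show this $W$ coincides with the $W$ from part~(i) by a direct identification of $V$ with the degree-$(2)$ part of the relevant ideal and of "cubics with partials in $V$" with "cubics apolar to $F - 3Q^2$"; the dimension count ($\dim U = \dim U^\perp = 14$, $\dim V = 13$, $\dim W = 7$) is then internally consistent and matches part~(i). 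For part~(iii), given a basis $f_0,\dots,f_6$ of $W$, the scheme $\{f_0=\dots=f_6 = F+Q^2 = 0\}$ is $G$-stable and contains $X(13)$; a degree computation — $X(13)$ has degree $\nu(m) = \nu\cdot m$ with $m=7$, $\nu = (13^2-1)/24 = 7$, giving degree $42$ — together with the observation that the cubics alone cut out $X(13)$ plus $42$ residual lines (each spanning, in triples, one of the $14$ hyperplanes dual to the points of $U$), and that adjoining the quartic $F+Q^2$ removes exactly the lines, pins down $X(13)$ on the nose; that $Q$ cuts out the $84$ cusps is Theorem~\ref{thm:RG}(iii) read off in the $A$-curve grading (the cusps are the fibre of $j$ over $\infty$, in class $12\lambda$, and $Q$ is the degree-$2$ invariant).

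The main obstacle I expect is part~(i): proving that the kernel $W$ has dimension exactly $7$ and that $F - 3Q^2$ is characterized (up to scalar) by apolarity to $W$. This is where a genuine rank computation is needed rather than soft arguments, and where the specific geometry of the $84$ cusps on the $A$-curve — and the fact that they impose independent conditions on quartics in just the right way — has to be used. Representation theory of $\G = \SL_2(\Z/13\Z)$ can organize the bookkeeping (decomposing $\C[x]_2$, $\C[x]_3$, $\C[x]_4$ into $\G$-isotypic pieces and locating $Q$, $F$, $W$), but identifying which isotypic component contributes the $7$-dimensional kernel, and checking it is not larger, seems to require either an explicit (machine-assisted) linear algebra verification or a careful analysis of the syzygies of the cuspidal scheme; I would present the representation-theoretic decomposition as the conceptual skeleton and defer the final rank check to an explicit computation, exactly as the paper signals it does.
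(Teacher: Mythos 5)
Your outline for parts (i) and (ii) matches the paper's in spirit (the paper simply says these are ``checked by linear algebra'' and exhibits an explicit basis $f_0,\ldots,f_6$ of $W$), but two of your supporting claims are false. First, there is no quadratic relation on $X(13)\subset\PP^6$: by Lemma~\ref{lem:notpn} one has $\dim S_2 = 28 = \dim\C[x_0,\ldots,x_6]_2$, so the ideal contains no quadrics, and $Q$ does \emph{not} vanish on the curve --- it cuts out the $84$ cusps. Your computation ``$\dim R_2 = 28-1=27$'' also rests on the wrong linear system: the $A$-curve is embedded by $|6\lambda|$, not $|\lambda|$, and the model is not even projectively normal ($\dim S_2 = 28$ while $\dim S'_2 = 35$), so graded pieces of the coordinate ring do not inject into the $H^0$'s as you assume. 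Second, your characterisation of the coefficient $3$ in $F-3Q^2$ cannot work: since $F+Q^2=0$ on the curve and $Q=0$ at the cusps, \emph{every} member of the pencil $F-cQ^2$ vanishes at the cusps, so vanishing there singles out nothing. The constant $3$ is only pinned down by the rank computation itself (the requirement that the kernel of $f\mapsto\langle f,F-cQ^2\rangle$ jump to dimension $7$), which is exactly the explicit check you defer.

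The more serious gap is in part (iii). You assert that the scheme $\{f_0=\cdots=f_6=F+Q^2=0\}$ ``is $G$-stable and contains $X(13)$'' and then argue by degree count; but the containment is the entire content of the statement, and nothing in your argument establishes it --- $W$ is defined purely by an apolarity condition, and you have given no reason why those cubics vanish on the modular curve. The paper closes this gap by importing an independent, previously established model: the $z$-curve for $X(13)$, cut out by the $4\times 4$ Pfaffians of the $13\times13$ matrix $(a_{i-j}a_{i+j})$ from \cite{7and11}, together with the explicit Adler--Ramanan map $(x_0:\cdots:x_6)=(1:a_2/a_1:\cdots:a_{12}/a_6)$ onto the $A$-curve. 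A Magma computation then verifies that the image satisfies the claimed equations, has degree $42$, meets $\{Q=0\}$ in $84$ distinct points (which must be the cusps, being a $G$-stable set of size $12\nu=84$, the unique orbit of that size), and that the cubics alone cut out a degree-$84$ scheme, hence the union of $X(13)$ and the $42$ lines $P_1P_4$, etc. Without some such independent description of $X(13)$ your part (iii) is circular; representation theory alone will not tell you which copy of the relevant $7$-dimensional representation inside $\C[x_0,\ldots,x_6]_3$ is the degree-$3$ piece of $I(X(13))$.
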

\begin{proof}
The first two parts are checked by linear algebra. The space
of cubic forms $W$ has basis $f_0, \ldots,f_6$ where
\begin{align*}
f_0 &= -2 x_0^3 + x_0 (x_1 x_4 + x_2 x_5 + x_3 x_6) + x_1 x_3 x_5 + x_2 x_4 x_6, \\
f_1 &= x_0 x_1^2 + 2 x_0 x_3 x_4 + 2 x_1 x_2 x_6  + x_2 x_4^2 + x_5 x_3^2 + x_6 x_5^2,
% f_2 &= x_0 x_2^2 + 2 x_0 x_4 x_5 + 2 x_1 x_2 x_3 + x_3 x_5^2 + x_6 x_4^2 + x_1 x_6^2, \\
% f_3 &= x_0 x_3^2 + 2 x_0 x_5 x_6 + 2 x_2 x_3 x_4 + x_4 x_6^2 + x_1 x_5^2 + x_2 x_1^2, \\
% f_4 &= x_0 x_4^2 + 2 x_0 x_6 x_1 + 2 x_3 x_4 x_5 + x_5 x_1^2 + x_2 x_6^2 + x_3 x_2^2, \\
% f_5 &= x_0 x_5^2 + 2 x_0 x_1 x_2 + 2 x_4 x_5 x_6 + x_6 x_2^2 + x_3 x_1^2 + x_4 x_3^2, \\
% f_6 &= x_0 x_6^2 + 2 x_0 x_2 x_3 + 2 x_5 x_6 x_1 + x_1 x_3^2 + x_4 x_2^2 + x_5 x_4^2.
\end{align*}
and the remaining $f_i$ are obtained from $f_1$ by the action of 
$M_6$, i.e., by cyclically permuting 
the subscripts $1, 2, \ldots, 6$. % but leaving the subscript $0$ unchanged.

Let $a_1, \ldots, a_6$ be coordinates on $\PP^5$. We write $a_0 = 0$,
$a_{-i} = -a_i$ and agree to read all subscripts mod 13. According to
\cite[Section 2]{7and11}, the $z$-curve for $X(13)$ is defined by the
4 by 4 Pfaffians of the 13 by 13 skew symmetric matrix
$(a_{i-j} a_{i+j})$.  According to \cite[\S 51]{AR}, the $A$-curve is
the image of the $z$-curve via the map
% this curve in $\PP^6$ via
\[ (x_0:x_1: \ldots: x_6) = \left( 1: \frac{a_2}{a_1}:
  \frac{a_4}{a_2}: \frac{a_8}{a_4}: \frac{a_3}{a_8}:\frac{a_6}{a_3}:
  \frac{a_{12}}{a_6} \right). \]
A calculation, performed using Magma~\cite{Magma}, shows that the
$A$-curve is defined by the vanishing of $f_0, \ldots,f_6$ and
$F + Q^2$.  As we remark in the proof of the next lemma, further
equations are needed to generate the homogeneous ideal. We also
checked using Magma that this curve has degree 42, and that it meets
the hypersurface defined by $Q$ in 84 distinct points. This set of
points is preserved by the action of $G \isom \PSL_2(\Z/13\Z)$, and so
must be the set of cusps on $X(13)$.

If we write $P_0=(1:0 \ldots : 0)$, $P_1 = (0:1:0: \ldots : 0)$, etc,
then $P_1, P_2, \ldots, P_6$ are cusps, and the cubics vanish on the
lines $P_1P_4$, $P_2P_5$ and $P_3P_6$. These lines belong to a single
$G$-orbit of size $42$. Another calculation using Magma shows that the
cubics define a curve of degree $84$, which must therefore be the
union of $X(13)$ and the $42$ lines.
\end{proof}

% The space of quartic forms annihilated by 
% \[ \left\{ f \left( \frac{\partial}{\partial x_1}, \ldots, 
%     \frac{\partial}{\partial x_7} \right) : f \in W \right\} \]
% is $1$-dimensional, say spanned by $D$. Then the space of
% quadric forms 
% \[ \left\{ f \left( \frac{\partial}{\partial x_1}, \ldots, 
%     \frac{\partial}{\partial x_7} \right) D(x_1, \ldots,x_7) 
%    : f \in V^\perp \right\} \]
% is also $1$-dimensional, say spanned by $Q^{\vee}$. Let 
% $Q$ by the quadratic form dual to $Q^{\vee}$, obtained by
% replacing the corresponding $7 \times 7$ symmetric matrix 
% by its inverse. Then the $84$ cusps on $X(13)$ are exactly
% its points of intersection with $Q = 0$.

% We may also describe the space of cubic forms in the theorem
% as the space of cubic forms annihilated by the differential 
% operators corresponding to $W$ and $Q^\vee$.

% If we use $Q$ to identify $\PP^6$ and $(\PP^6)^\vee$ then,
% with the notation of Theorem~\ref{thm:X13viapts}, the space of
% quadratic forms decomposes as an orthogonal direct sum
% $U \oplus V \oplus \langle Q \rangle$. This corresponds to the
% decomposition of the symmetric square of a $7$-dimensional 
% irreducible representation of $\PSL_2(\Z/13\Z)$.

Some care must be taken in working with the above model for $X(13)$,
since it is not projectively normal. In other words, the rings $S$ and
$S'$ in the following lemma are not the same.
\begin{Lemma}
\label{lem:notpn}
Let $X = X(13) \subset \PP^6$ be as in Theorem~\ref{thm:QF->curve}.
Let $S = \oplus_{d \ge 0} S_d$ be its homogeneous coordinate ring, and
let $S' = \oplus_{d \ge 0} H^0(X,\O_X(d))$. Then
\begin{align*}
\sum_{d \ge 0} (\dim S_d) t^d = 1 + 7 t + 28 t^2 + 77 t^3 + 119 t^4 + \ldots \\
\sum_{d \ge 0} (\dim S'_d) t^d = 1 + 7 t + 35 t^2 + 77 t^3 + 119 t^4 + \ldots 
\end{align*}
and $\dim S_d = \dim S'_d= 42 d - 49$ for all $d \ge 3$.
\end{Lemma}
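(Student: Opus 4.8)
The plan is to handle the two rings separately: the ring $S'$ by Riemann--Roch on $X = X(13)$, and the ring $S$ by analysing the homogeneous ideal $I = I(X)$ of $X \subset \PP^6$ directly.

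For $S'$ I would use Riemann--Roch. Recall from Section~\ref{sec:X(p)} that $X$ has genus $g = (p-6)\nu + 1 = 50$, so $2g - 2 = 98$, and canonical class $14\lambda$. The hyperplane class $H$ of the $A$-curve is $G$-invariant of degree $42 = 6\nu$, and $(\Pic X)^G = \Z\lambda$ by Theorem~\ref{thm:AR}, so $H \sim 6\lambda$ and $K - dH \sim (14-6d)\lambda$. For $d \ge 3$ we have $\deg(dH) = 42 d \ge 126 > 98$, hence $H^1(X,\O_X(dH)) = 0$ and Riemann--Roch gives $\dim S'_d = 42d - g + 1 = 42 d - 49$. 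For $d = 0$ the dimension is $1$; for $d = 1$ it is $h^0(X,\O_X(H)) = 7$ (the lower bound is nondegeneracy, and equality records that the $A$-curve is cut out by the complete linear system $|6\lambda|$, as in the Adler--Ramanan description and as one sees from the explicit equations); and for $d = 2$, Riemann--Roch reduces the claim $\dim S'_2 = 35$ to the vanishing $h^0(X, 2\lambda) = 0$.

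To prove $h^0(X, 2\lambda) = 0$ I would argue by contradiction: put $n = h^0(X, 2\lambda)$ and suppose $n \ge 1$. Since $2\lambda$ is $G$-invariant of even degree, $G = \PSL_2(\Z/13\Z)$ acts on $H^0(X, 2\lambda)$ by Theorem~\ref{thm:RG}(i). If $n \le 5$, the action is trivial because $\SL_2(\Z/13\Z)$ has no nontrivial irreducible representation of dimension below $6$; a nonzero $G$-invariant section would then have $G$-invariant effective zero divisor of degree $14$, contradicting the fact that every $G$-orbit on $X$ has size at least $12\nu = 84$. If $n \ge 6$, then after removing its base locus $|2\lambda|$ defines a morphism onto a nondegenerate curve $C' \subset \PP^{n-1}$ of degree $d_0 \ge n-1 \ge 5$, with $e d_0 \le 14$ for $e = \deg(X \to C')$, so $e \le 2$. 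If $e = 1$ then $C'$ has geometric genus $50$, contradicting Castelnuovo's bound (a nondegenerate curve of degree $\le 14$ in $\PP^{\ge 5}$ has genus at most $15$). If $e = 2$ then $G$-equivariance makes $G$ act on the normalisation of $C'$, a curve of genus $\le 2$, faithfully (the kernel is a proper normal subgroup of the simple group $G$, hence trivial, as $G$ cannot embed in the order-$2$ deck group of $X \to C'$); but no curve of genus $\le 2$ admits a faithful action of the simple group $\PSL_2(\Z/13\Z)$ of order $1092$. Hence $h^0(X, 2\lambda) = 0$. (Equivalently, this vanishing is immediate from a Magma computation with the explicit model.)

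For $S$ I would compute the ideal directly. Write $S_d = \C[x_0, \ldots, x_6]_d / I_d$, so $\dim S_d = \binom{d+6}{6} - \dim I_d$ and $S_d \hookrightarrow S'_d$. By nondegeneracy $I_0 = I_1 = 0$, so $\dim S_0 = 1$ and $\dim S_1 = 7$; by Theorem~\ref{thm:QF->curve} no quadric vanishes on $X$, so $I_2 = 0$ and $\dim S_2 = 28$; and $I_3$ is spanned by $f_0, \ldots, f_6$, so $\dim S_3 = 84 - 7 = 77$. To reach the stable value $\dim S_d = 42 d - 49$ for all $d \ge 3$ — equivalently, that $X$ is projectively normal in degrees $\ge 3$, i.e. $H^1(\PP^6, \mathcal{I}_X(d)) = 0$ for $d \ge 3$ — I would verify that $\mathcal{I}_X$ is $4$-regular by computing a minimal free resolution of $I$ in Magma (the generators being $f_0, \ldots, f_6$, $F + Q^2$, and the further quartics that are needed, since $\dim I_4 = 210 - 119 = 91$); as $\dim S_2 = 28 < 35 = \dim S'_2$, the ideal sheaf fails to be $3$-regular, so its regularity is exactly $4$. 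The two inputs beyond Riemann--Roch and linear algebra are thus $h^0(2\lambda) = 0$ and the $4$-regularity of $\mathcal{I}_X$; the latter — confirming that no unexpected generators or syzygies of $I$ appear in low degree for this non-projectively-normal model — is where I expect the real work, and is most cleanly done by an explicit resolution computation.
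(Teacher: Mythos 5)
Your proposal is essentially correct but takes a genuinely different route from the paper at the two places where the real content lies. For the stable range $\dim S_d = \dim S'_d$ ($d\ge 3$), the paper does not compute a free resolution or appeal to Castelnuovo--Mumford regularity: it computes the Hilbert function $\dim T_d$ of the $84$ cusps, observes $\dim T_d = 84$ for $d \ge 4$, and runs an induction ``if $f \in S'_d$ with $d \ge 5$, subtract an element of $S_d$ so that $f$ vanishes at the cusps, then apply the inductive hypothesis to $f/Q \in S'_{d-2}$''. This needs only the degree $\le 4$ part of the ideal (the $7$ cubics and $42$ quartics from the Gr\"obner basis computation) rather than a full resolution, which is a real saving for a degree-$42$ curve in $\PP^6$; your $4$-regularity route is valid in principle but computationally heavier. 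For the values $\dim S'_1 = 7$ and $\dim S'_2 = 35$, the paper again uses the cusps, identifying $S'_d$ with the subspace of $S'_{d+2}$ vanishing at the cusps and computing $\dim S_{d+2} - \dim T_{d+2}$; your alternative of reducing $\dim S'_2 = 35$ to $h^0(X,2\lambda)=0$ via Riemann--Roch and killing $h^0(2\lambda)$ by representation theory of $\PSL_2(\Z/13\Z)$ together with orbit sizes and Castelnuovo's bound is a nice conceptual substitute that works.

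There is, however, one genuine gap: your justification of $\dim S'_1 = 7$. You cannot cite the Adler--Ramanan description for this, because the completeness of the linear system embedding the $A$-curve is exactly the ``WYSIWYG hypothesis'' of \cite{AR}, which is a hypothesis, not a theorem there; the paper's Remark~\ref{lem:notpn} (following the lemma) makes clear that this completeness is \emph{deduced} from the lemma, so invoking it is circular. Riemann--Roch alone does not settle it either, since $6\lambda$ is special ($h^0(6\lambda) = h^0(8\lambda) - 7$, and the same identity read the other way gives nothing new). You would need to close this by an argument analogous to your $h^0(2\lambda)=0$ one (e.g.\ ruling out extra irreducible summands of $H^0(6\lambda)$ as a $\PSL_2(\Z/13\Z)$-module using orbit sizes and Clifford's theorem), or by the paper's device of computing $\dim S_3 - \dim T_3 = 77 - 70 = 7$. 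A smaller point of the same kind: $\dim S_2 = 28$ is equivalent to $I_2 = 0$, which does not follow formally from Theorem~\ref{thm:QF->curve} (that theorem exhibits set-theoretic defining equations in degrees $3$ and $4$, but does not by itself exclude quadrics in the homogeneous ideal); this too is part of the explicit ideal computation.
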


\begin{proof}
  Using the Gr\"obner basis machinery in Magma \cite{Magma} we were
  able to compute $42$ quartic forms that together with the $7$ cubic
  forms generate the homogeneous ideal of $X$. From this it is easy to
  compute $\dim S_d$ for any given $d$.  In particular we verified the
  values recorded in the statement of the lemma for each $d \le 4$. On
  the other hand, since $X$ has degree $42$ and genus $50$ it follows
  by Riemann-Roch that $\dim S'_d = 42d - 49$ for all $d \ge 3$.

  Let $T = \oplus_d T_d$ be the homogeneous coordinate ring of the set
  of $84$ cusps. Again by computer algebra we find
  \[ \sum_{d \ge 0} (\dim T_d) t^d = 1 + 7 t + 27 t^2 + 70 t^3 + 84
  t^4 + \ldots \]
  Therefore $\dim T_d = 84$ for all $d \ge 4$.  We show by induction
  on $d$ that the inclusion $S_d \subset S'_d$ is an equality for all
  $d \ge 3$. We have already checked this for $d= 3,4$.  So let
  $f \in S'_d$ with $d \ge 5$. Since $\dim T_d = 84$ we may reduce to
  the case where $f$ vanishes at the cusps. But then applying the
  induction hypothesis to $f/Q \in S'_{d-2}$ gives the result.
  Finally, by identifying $S'_d$ with the subspace of $S'_{d+2}$
  vanishing at the cusps, we compute
  \begin{align*}
    \dim S'_1 &= \dim S_3 - \dim T_3 = 7, \\
    \dim S'_2 &= \dim S_4 - \dim T_4 = 35. \qedhere 
  \end{align*}
\end{proof}

\begin{Remark}
  We have shown that $\dim S'_1 = 7$ and therefore
  $X(13) \subset \PP^6$ is embedded by a complete linear system. This
  is a special case of the ``WYSIWYG hypothesis'' in \cite{AR}.
\end{Remark}

\begin{Definition}
\label{def:cov}
A {\em covariant} of degree $d$ is a column vector $\vv$ of
polynomials in $\C[x_0,\ldots,x_6]_d$ satisfying $\vv \circ g = g \vv$
for all $g \in G$.
\end{Definition}

Starting from an invariant $I$ of degree $d$ we may construct a
covariant of degree $d-1$ as
\[ \nabla_Q I = H(Q)^{-1} \begin{pmatrix} \partial I/ \partial x_0 \\
\vdots \\ \partial I/ \partial x_6 \end{pmatrix}. \]
On the other hand if $\vv$ and $\ww$ are covariants of degrees $d$ and $e$ then 
\[ \vv \cdot \ww := \vv^T H(Q) \ww = \coeff( Q(\vv + t \ww) , t) \]
is an invariant of degree $d+e$. We may also think of covariants as
$G$-equivariant polynomial maps $\C^7 \to \C^7$. Thus the composition
of covariants $\vv$ and $\ww$ of degrees $d$ and $e$ is a covariant
$\vv \circ \ww$ of degree $de$.

It is easy to compute the dimensions of the spaces of invariants and
covariants of any given degree $d$ from the character table of $G$.
We may also solve for the invariants and covariants of degree $d$ by
linear algebra over $\Q(\zeta)$, at least if $d$ is not too large.

\begin{Remark}
\label{rem:effavg}
A standard trick for computing invariants is to start with an
arbitrary polynomial $f$ and apply the operator
$f \mapsto \frac{1}{|G|} \sum_{g \in G} f \circ g$.  An efficient way
to organise this calculation, which will become important in
Section~\ref{sec:mdqs}, is the following.  Let
$\pi : f \mapsto \frac{1}{13} \sum_{i=0}^{12} f \circ M_{13}^i$ be the
projection map that sends a monomial fixed by $M_{13}$ to itself, and
all other monomials to zero. Then starting with a monomial in the
image of $\pi$ we apply the operators
$f \mapsto \sum_{i=0}^5 f \circ M_6^i$ and
$f \mapsto f + 13 \pi(f \circ M_2)$. % \check{in either order?}
\end{Remark}

For our work in Sections~\ref{sec:XE(13,1)} and~\ref{sec:XE(13,2)}, it
is important that we find ways of constructing invariants and
covariants from previously known examples in a basis-free way. Another
consideration is that it would be overkill for us to completely
classify the invariants and covariants, as we are only interested in
them modulo the equations defining the curve $X = X(13)$.

Writing $\psi$ for the standard representation of
$G \subset \SL_7(\C)$, we find that $\wedge^3 \psi$ contains a copy of
the trivial representation.  The corresponding $G$-invariant
alternating form is
\begin{equation}
\label{altform}
\Phi = (x_0 \wedge x_1 \wedge x_4) - (x_0  \wedge x_2 \wedge x_5) 
+ (x_0 \wedge x_3 \wedge x_6) 
+ (x_1 \wedge x_3 \wedge x_5) - (x_2 \wedge x_4 \wedge x_6).  
\end{equation}

Again let $\langle ~,~ \rangle$ be as defined in~\eqref{brac}.

\begin{Lemma} 
\label{lem:N}
There is a $7 \times 7$ alternating matrix $N$ of linear forms in
$\C[x_0,\ldots,x_6]$, unique up to an overall scaling, such that
\[ \big\langle ( N \nabla_Q F )_i, \frac{\partial F}{\partial x_i} \big\rangle = 0 \]
for all $0  \le i \le 6$. % Moreover $N$ is unique up to scalars.
\end{Lemma}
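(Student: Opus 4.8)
The plan is to search for $N$ among all $7\times 7$ alternating matrices with linear-form entries, to produce the expected generator from the alternating form $\Phi$ of~\eqref{altform}, and to settle existence and uniqueness by a linear-algebra computation. The space of such matrices has dimension $\binom{7}{2}\cdot 7=147$. For any such $N$ the column vector $N\nabla_Q F$ has quartic entries, so each $\langle (N\nabla_Q F)_i,\partial F/\partial x_i\rangle$ is a cubic form depending linearly on the $147$ coefficients of $N$; demanding that all seven vanish identically is a homogeneous linear condition, with rational coefficients since $Q$, $F$ and $H(Q)^{-1}$ are defined over $\Q$. The lemma is then the assertion that this system has a one-dimensional solution space.

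To exhibit a generator, and to see why the answer should be this clean, I would first write down the candidate coming from $\Phi$: interpreting $\Phi$ as an alternating trilinear form and using the $G$-invariant quadratic form $Q$ to raise an index, one obtains an alternating matrix $N_0$ of linear forms for which $\vv\mapsto N_0\vv$ carries covariants to covariants. Since $\wedge^3\psi$ contains the trivial representation exactly once, $N_0$ is, up to scalar, the only matrix of linear forms with this equivariance property; and since $\nabla_Q$, $\langle\,,\rangle$ and $F$ are all built from the $G$-invariants $Q$ and $F$, the seven expressions $\langle (N_0\nabla_Q F)_i,\partial F/\partial x_i\rangle$ are compatible with the $G$-action. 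So the steps are: (i) construct $N_0$ from $\Phi$; (ii) verify by computation that the seven cubics $\langle (N_0\nabla_Q F)_i,\partial F/\partial x_i\rangle$ vanish identically, which gives existence; (iii) solve the $147$-variable linear system over $\Q$ and confirm its kernel is spanned by $N_0$, which gives uniqueness.

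The main obstacle is step (iii). The seven conditions amount to roughly $7\cdot\binom{9}{3}=588$ scalar equations in $147$ unknowns, so a priori one would expect no solution at all, and it is only the $G$-equivariance of the construction --- ultimately the uniqueness of $\Phi$ --- that makes a one-dimensional solution space plausible. One could in principle replace the computation by an analysis of how the seven conditions act on the $G$-isotypic decomposition of the $147$-dimensional space, read off from the character table of $\PSL_2(\Z/13\Z)$, showing that everything but the line through $N_0$ is killed; but I expect that, as with the other verifications in this section, it is cleanest to carry out the linear algebra in Magma, report that the kernel is one-dimensional, and exhibit $N_0$ --- with its entries relegated to the file \cite{magmafile} if bulky.
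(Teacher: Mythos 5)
Your proposal is correct and is essentially the paper's proof: the paper simply states that the lemma "is checked by linear algebra," exhibits the resulting matrix $N$ explicitly, and notes that $N_{ij}=(\frac{\partial}{\partial x_i}\wedge\frac{\partial}{\partial x_j})\Phi$, which is exactly your candidate $N_0$ built from the invariant alternating form. Your extra remarks on $G$-equivariance and the $147$-dimensional ambient space are a reasonable elaboration of the same computation rather than a different route.
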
 
\begin{proof}
This is checked by linear algebra. We find that
\begin{equation}
\label{eqn:N}
N = \begin{pmatrix}
     0 & x_4 & -x_5 & x_6 & -x_1 & x_2 & -x_3 \\
     -x_4 & 0 & 0 & x_5 & x_0 & -x_3 & 0 \\
     x_5 & 0 & 0 & 0 & -x_6 & -x_0 & x_4 \\
     -x_6 & -x_5 & 0 & 0 & 0 & x_1 & x_0 \\
     x_1 & -x_0 & x_6 & 0 & 0 & 0 & -x_2 \\
     -x_2 & x_3 & x_0 & -x_1 & 0 & 0 & 0 \\
     x_3 & 0 & -x_4 & -x_0 & x_2 & 0 & 0 
\end{pmatrix}. 
\end{equation}
In a more succinct notation, we have $N = (N_{ij})$ where 
$N_{ij} = (\frac{\partial}{\partial x_i} 
       \wedge \frac{\partial}{\partial x_j}) \Phi$.
\end{proof}

We define covariants $\vv_3 = \nabla_Q F$ and
$\vv_4 = H(Q)^{-1} N \vv_3$, where $N$ is given by~\eqref{eqn:N}.
Then $\vv_9 = \vv_3 \circ \vv_3$ is a covariant of degree 9, and
$c_6 = \vv_4 \cdot \vv_9$ is an invariant of degree 13.  Our next
theorem shows that although the rings $S$ and $S'$ in
Lemma~\ref{lem:notpn} are different, their $G$-invariant subrings are
the same.

\begin{Theorem}
\label{myinv}
Let $S$ be the coordinate ring of $X \subset \PP^6$. Then
$S^G = \C[Q,c_6]$ and the $j$-map $X \to \PP^1$ is given by
$j = 1728 - c_6^2/Q^{13}$.
\end{Theorem}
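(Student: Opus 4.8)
The plan is to identify $S^G$ with the abstract invariant ring $R^{\G}$ of Theorem~\ref{thm:RG}. Recall that $X = X(13) \subset \PP^6$ is embedded by the complete linear system $|\lambda|$, where $\lambda$ is the Adler--Ramanan class of degree $\nu = 14$ (here $\dim S'_1 = 7$ by Lemma~\ref{lem:notpn} and the Remark following it). Theorem~\ref{thm:RG} then gives $R^{\G} = \C[c_4, c_6, D]$ with $c_4, c_6, D$ of degrees $4p = 52$, $6p = 78$, and $12$ in the grading by multiples of $\lambda$, subject to $c_4^3 - c_6^2 = 1728 D^{13}$ and $j = c_4^3/D^{13}$. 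Converting to the grading by $S$-degree (divide by $\nu/\!\deg$... more precisely, degree-$d$ elements of $S$ are degree-$d$ elements of $R$ in the $\lambda$-grading): this seems off since $c_4$ would have $S$-degree $52$. The resolution is that $S'$ is \emph{not} $R$; rather $S'_d = R_{d}$ only for $d$ large, and the relevant observation is that $D \in R_{12} = S'_{12}$... Let me instead argue directly, as the theorem statement essentially tells us to.

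First I would verify by linear algebra (over $\Q(\zeta)$, or just over $\Q$ after the construction) that $Q$ and $c_6$ are indeed $G$-invariant: $Q$ is invariant by direct check, and $c_6 = \vv_4 \cdot \vv_9$ is invariant because it is built from the covariants $\vv_3 = \nabla_Q F$, $\vv_4 = H(Q)^{-1} N \vv_3$ (with $N$ as in Lemma~\ref{lem:N}, which commutes appropriately with the $G$-action since $N_{ij} = (\partial_i \wedge \partial_j)\Phi$ and $\Phi$ is $G$-invariant by~\eqref{altform}), together with the pairing and composition operations, all of which preserve $G$-equivariance by construction. So $\C[Q, c_6] \subseteq S^G$. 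Next I would confirm the degrees: $Q$ has degree $2$ and $c_6$ has degree $13$, so $\C[Q,c_6]$ has Hilbert series $\frac{1}{(1-t^2)(1-t^{13})}$, giving dimensions $1, 0, 1, 0, 1, 0, 2, 0, 2, 0, 2, 0, 3, 1, \ldots$ in degrees $0$ through $13$.

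The core step is the reverse inclusion $S^G \subseteq \C[Q, c_6]$, and the main obstacle is controlling $S^G$ in low degrees where $S \ne S'$. For $d \le 2$ the two rings agree ($\dim S_1 = \dim S'_1 = 7$, and $\dim S_2 = 28 < 35 = \dim S'_2$ — here they differ). I would compute $\dim (S_d)^G$ for small $d$ directly from the $G$-module structure of $S_d$ (using the character of $\Sym^d \psi$ reduced modulo the ideal, which is feasible since we have explicit generators of the homogeneous ideal from Lemma~\ref{lem:notpn}), and check these match the Hilbert-series predictions for $\C[Q,c_6]$ — in particular that $(S_d)^G$ is spanned by $\{Q^a c_6^b : 2a + 13b = d\}$ for $d$ up to some bound, say $d \le 26$. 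For large $d$ one argues as in the proof of Theorem~\ref{thm:RG}(ii): given $G$-invariant $f \in S_d$, it vanishes on some $G$-orbit; if the orbit has size $12\nu = 168$ (the generic case) subtract a suitable linear combination of $c_4^3$ and $c_6^2$... but $c_4 \notin \C[Q,c_6]$, so one instead uses that $Q$ cuts out the $84$ cusps (each with multiplicity $2$, accounting for the $168$) and $c_6$ vanishes to order $1$ at each preimage of $j=1728$; an invariant not divisible by $Q$ must then, after subtracting a multiple of $c_6^2$, be handled by induction. The cleanest formulation: since $Q$ defines the cusps and $j = 1728 - c_6^2/Q^{13}$ has been verified, the function field $\C(X)^G = \C(j)$ is generated by $c_6^2/Q^{13}$; so any $G$-invariant element of $S'$ is a polynomial in $Q$ and $c_6^2/Q^{13}$ times a power of $Q$, hence (being a polynomial) lies in $\C[Q, c_6^2, c_6^2 \cdot(\text{lower}}]$... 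I would make this precise by clearing denominators and using that $c_6^2$ and $Q^{13}$ have no common factor in $S'$ (their zero loci are disjoint: cusps versus $j=1728$ fibre).

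Finally, to pin down the $j$-map formula: both $j$ and $1728 - c_6^2/Q^{13}$ are $G$-invariant rational functions on $X$ of degree $|G| = 168$ (the zero divisor of $c_6^2$ has degree $2 \cdot 13 \cdot 6 = 156$... let me recount: $\deg c_6 = 13$ as a form on a degree-$42$ curve gives a divisor of degree $13 \cdot 42 = 546$; and $\deg Q^{13} = 2 \cdot 13 \cdot 42 = 1092 = 2 \cdot 546$, so the rational function $c_6^2/Q^{13}$ has pole divisor supported on the cusps of degree $1092$, matching $|G|$ counted with the cusp multiplicity $13$ since there are $84$ cusps). It maps to $\PP^1$ with the same ramification data as the true $j$-map, so agrees with it up to a Möbius transformation; since $Q$ vanishes exactly on the cusps (the fibre over $j = \infty$) and $c_6$ vanishes on the fibre over $j = 1728$ (where $c_6^2/Q^{13} = 0$), two points of $\PP^1$ are matched, and checking behaviour at a third (e.g.\ that $c_6^2/Q^{13} = 1728$ forces $j = 0$, equivalently that $c_4 := $ the relevant invariant from Theorem~\ref{thm:RG} has $c_4^3 = 1728 Q^{13} - c_6^2 + c_6^2$... i.e.\ $c_4^3 = 1728(Q^{13}) $ when $j = 0$) pins down the Möbius map as the identity. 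I expect the low-degree dimension count to be the genuinely laborious part, but it is a finite check that Magma handles; the structural argument for large degrees is routine given Theorem~\ref{thm:RG}.
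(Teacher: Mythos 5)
Your opening instinct --- identify $S^G$ inside the abstract invariant ring of Theorem~\ref{thm:RG} --- is exactly the paper's argument, but you abandoned it because of a grading error and the replacement does not close. For $p=13$ one has $\nu=(p^2-1)/24=7$, not $14$, and the $A$-curve has degree $m\nu=42$, so its hyperplane class is $6\lambda$, not $\lambda$; thus $S^G\subseteq (S')^G=\oplus_d H^0(X,\O(6d\lambda))^G$, which by~\eqref{az13} is the polynomial ring $\C[D,c_6]$ with generators in degrees $2$ and $13$ (in this grading $c_4$ simply does not appear, since $4p=52$ is not a multiple of $6$). Since $Q$ and $c_6$ are nonvanishing invariants in $S$ of exactly those degrees, they must coincide with the abstract generators up to scalars, and both inclusions follow at once. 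Note that the containment $S^G\subseteq (S')^G$ makes the failure of projective normality (Lemma~\ref{lem:notpn}) harmless, so your worry about ``controlling $S^G$ in low degrees where $S\ne S'$'' evaporates. By contrast, your direct substitute has two genuine gaps: the large-degree induction modelled on Theorem~\ref{thm:RG}(ii) needs to divide by $c_4$, which you correctly observe is unavailable, and your proposed repair invokes ``$j=1728-c_6^2/Q^{13}$ has been verified'' --- but that formula is the \emph{second} assertion of the theorem and is derived \emph{from} the ring structure, so the argument is circular; moreover passing from $\C(X)^G=\C(j)$ to the graded ring $S^G$ requires an integrality argument you have not supplied.

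For the $j$-map, your three-point M\"obius argument is sound in outline but rests on the unproved claim that $c_6$ vanishes on the fibre above $j=1728$; this is only known once $c_6$ has been identified (up to scalar) with the abstract degree-$13$ generator, i.e.\ after the ring computation above. Granting that, Theorem~\ref{thm:RG}(iii) already gives $j-1728=\xi\,c_6^2/Q^{13}$ for some constant $\xi$, so only the single scalar $\xi$ remains to be determined; your description of the third-point check is garbled (the sentence involving ``$c_4:=$'' does not terminate in a verifiable condition). The paper pins down $\xi=-1$ by evaluating $c_6^2/Q^{13}$ at an explicit point fixed by $M_6^2$, hence lying above $j=0$. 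You would need to exhibit such a point (or some equivalent normalisation) to finish; without it the sign of $\xi$, and hence the formula, is not established.
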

\begin{proof} We find that $c_6(0,1,0,0,0,0,0) = -1$, and so $c_6$
  does not vanish identically on $X$. Since $X \subset \PP^6$ is the
  $A$-curve, it has hyperplane section $6 \lambda$. Therefore $S^G$ is
  a subring of
  \[ \oplus_{d \ge 0} H^0(X, \O(6d \lambda))^G. \]
  By Theorem~\ref{thm:RG}, or more specifically~\eqref{az13}, the
  latter is a polynomial ring in two variables, generated in degrees 2
  and 13.  Since we have constructed invariants $Q$ and $c_6$ of these
  degrees, and these invariants do not vanish on $X$, this proves the
  first part of the theorem.

  By Theorem~\ref{thm:RG}(iii) we have $j - 1728 = \xi c_6^2/Q^{13}$
  for some constant $\xi$. Let $\omega$ be a primitive cube root of
  unity, and put $\alpha = \sqrt{-1 + 3 \omega}$.  The point
  \[
  (-2:\omega+\alpha:\omega-\alpha:\omega+\alpha:\omega-\alpha:\omega+\alpha:\omega-\alpha)
  \in X \]
  is fixed by $M_6^2 \in G$ of order $3$, and so lies above $j = 0$.
  The function $c_6^2/Q^{13}$ takes the value $1728$ at this point.
  Therefore $\xi = -1$.
\end{proof}

For later use (when applying Lemma~\ref{lem:23}) we also record a
point on $X$ above $j=1728$.  Let $i = \sqrt{-1}$ and let $\beta$ be a
root of $x^3 - (i + 1) x^2 - x + i = 0$. Let $\sigma$ be the
automorphism of $\Q(\beta)$ given by $\beta \mapsto \beta^2 - \beta$.
Then the point
\[ (1 : \beta : \sigma(\beta) : \sigma^2(\beta) : \beta :
\sigma(\beta) : \sigma^2(\beta) ) \in X \]
is fixed by $M_6^3 \in G$ of order $2$, and so lies above $j = 1728$.

\subsection{Equations for $X_E(13,1)$}
\label{sec:XE(13,1)}

We compute equations for $X_E(13,1)$ from those for $X = X(13)$ in
Theorem~\ref{thm:QF->curve} by making a change of coordinates. For
this we use the $7 \times 7$ matrix formed from the following $7$
covariants. As before, the subscripts indicate the degrees of the
covariants.
\begin{align*}
\vv_1 &= (x_0,x_1, \ldots,x_6)^T &
\vv_{10} &= \coeff(\vv_4 \circ (\vv_1 + t \vv_3), t^3) \\
\vv_3 &= \nabla_Q F &
\vv_{12} &= \vv_3 \circ \vv_4 \\
\vv_4 &= H(Q)^{-1} N \vv_3 & 
\vv_{13} &= \coeff(\vv_4 \circ (\vv_1 + t \vv_4), t^3) 
\\
\vv_9 &= \vv_3 \circ \vv_3 
\end{align*}

\begin{Lemma} 
\label{lem:det1}
We have 
\[ \det(\vv_1,\vv_3,\vv_4,\vv_9,\vv_{10},\vv_{12},\vv_{13}) 
 = (c_6^2 - 1728 Q^{13})^2 \mod{ I(X) }. \]
\end{Lemma}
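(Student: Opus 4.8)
The identity to be verified is an equality of two elements of $S = \C[x_0,\ldots,x_6]/I(X)$, both homogeneous: the left-hand side is the determinant of the $7\times 7$ matrix whose columns are the covariants $\vv_1,\vv_3,\vv_4,\vv_9,\vv_{10},\vv_{12},\vv_{13}$, of degrees $1,3,4,9,10,12,13$, hence a form of degree $52$; the right-hand side $(c_6^2 - 1728\,Q^{13})^2$ is also of degree $52$ (since $\deg c_6 = 13$, $\deg Q = 2$). My plan is to show both sides are $G$-invariant, locate their zero divisors on $X$, and match them up to a scalar which is then pinned down at a single point.

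First I would argue invariance: each $\vv_i$ is a covariant, i.e.\ satisfies $\vv_i\circ g = g\,\vv_i$ for all $g\in G$, so the matrix $M(\x) = (\vv_1\mid\cdots\mid\vv_{13})$ transforms as $M(g^{-1}\x) = g\,M(\x)$, whence $\det M(g^{-1}\x) = (\det g)\det M(\x) = \det M(\x)$ because $G \subset \SL_7(\C)$. Thus $\det M$ is an invariant of degree $52$, and by Theorem~\ref{myinv} its image in $S^G = \C[Q,c_6]$ is a polynomial in $Q$ and $c_6$; the only monomials of weighted degree $52$ are $Q^{26}$, $Q^{13}c_6^2$, and $c_6^4$. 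So $\det M \equiv \alpha\, c_6^4 + \beta\, Q^{13}c_6^2 + \gamma\, Q^{26} \pmod{I(X)}$ for scalars $\alpha,\beta,\gamma$, and it suffices to determine these three numbers. The cleanest way is to locate zeros: at a point of $X$ above $j = 1728$ we have $c_6^2 = 1728\,Q^{13}$ by Theorem~\ref{myinv}, so I would like to show $\det M$ vanishes to order $2$ along the (reduced) fibre of $j$ over $1728$. Geometrically this is the statement that the seven covariants, evaluated at such a point, become linearly dependent — indeed one expects a double dependence. I would verify this by computer algebra in Magma (as is done for the analogous claims throughout Section~\ref{sec:X(13)twists}): evaluate the symbolic covariants, reduce modulo $I(X)$, and check that $\det M$ lies in the square of the ideal cutting out $j^{-1}(1728)$ on $X$; equivalently, check divisibility of $\det M$ by $(c_6^2 - 1728\,Q^{13})^2$ in $S$. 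This forces $\beta = -2\cdot 1728\,\alpha$ and $\gamma = 1728^2\alpha$, i.e.\ $\det M \equiv \alpha\,(c_6^2 - 1728\,Q^{13})^2 \pmod{I(X)}$.

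It then remains to show $\alpha = 1$, which I would do by evaluating both sides at one convenient point of $X$ — for instance a cusp, where $Q = 0$, so the right-hand side becomes $c_6^4$; using $c_6(0,1,0,0,0,0,0) = -1$ from the proof of Theorem~\ref{myinv} (noting $(0:1:0:\cdots:0) = P_1$ is a cusp), the right-hand side is $1$ there, and I would check (again by direct substitution) that $\det M$ also takes the value $1$ at that point. Care is needed because the covariants $\vv_{10},\vv_{13}$ are defined as coefficients extracted from compositions $\vv_4\circ(\vv_1 + t\vv_3)$ and $\vv_4\circ(\vv_1+t\vv_4)$, so their explicit coordinate forms are bulky; but since only a numerical evaluation at $P_1$ is required, this is routine. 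The main obstacle is really the middle step — establishing the \emph{double} vanishing of $\det M$ along $j^{-1}(1728)$ rather than a simple zero — and I would lean on Magma for this, either by exhibiting the explicit factorisation $\det M = (c_6^2 - 1728\,Q^{13})^2$ in $S$ directly, or by computing the order of vanishing of $\det M$ along that divisor via a local computation. Once the divisibility is confirmed, the invariant-theoretic constraint from $S^G = \C[Q,c_6]$ leaves no freedom beyond the scalar, and the single point evaluation finishes the proof.
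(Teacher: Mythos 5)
Your overall strategy matches the paper's: observe that the determinant is a $G$-invariant of degree $52$ (your $\SL_7$ argument for this is correct), invoke Theorem~\ref{myinv} to conclude it is a linear combination of $Q^{26}$, $Q^{13}c_6^2$ and $c_6^4$ modulo $I(X)$, and then pin down the three coefficients by a finite computation. The paper does the last step by evaluating both sides at points of $X$ (heuristically mod $p$, then rigorously at a single point over a degree-$20$ number field whose Galois orbit imposes enough conditions on the $3$-dimensional space of candidates); you propose instead to force the shape $\alpha(c_6^2-1728\,Q^{13})^2$ via an order-of-vanishing argument and then normalise $\alpha$ at the cusp $P_1=(0:1:0:\cdots:0)$, where $Q=0$ and $c_6=-1$. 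That normalisation step is fine.

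The gap is in the order-of-vanishing step. By Theorem~\ref{myinv} the $j$-map is $j = 1728 - c_6^2/Q^{13}$, so the locus $\{c_6^2 = 1728\,Q^{13}\}$ on $X$ is the fibre over $j=0$, not the fibre over $j=1728$ (the latter is $\{c_6=0\}$, where $(c_6^2-1728\,Q^{13})^2 = 1728^2Q^{26}$ does \emph{not} vanish). Moreover, in the notation of Theorem~\ref{thm:RG} one has $1728\,Q^{13}-c_6^2 = c_4^3$ with $c_4$ cutting out the \emph{reduced} fibre over $j=0$ (the $j$-map is triply ramified there), so $(c_6^2-1728\,Q^{13})^2$ vanishes to order $6$, not $2$, along that reduced fibre. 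A Magma check implemented as you describe it -- order-$2$ vanishing along $j^{-1}(1728)$ -- would therefore not establish the claim, and would in fact report failure. The fix is either to check order-$\ge 6$ vanishing along the reduced fibre over $j=0$ (equivalently, membership in $(c_6^2-1728\,Q^{13})^2 + I(X)$, noting the quotient has degree $0$ and is hence a constant), or to abandon the divisor argument and simply evaluate at enough points to determine $\alpha,\beta,\gamma$ directly, as the paper does. Note also that your fallback of ``exhibiting the explicit factorisation in $S$'' is not really a shortcut: since the putative cofactor is a constant, that divisibility statement is the lemma itself up to the scalar, and verifying it still amounts to the same symbolic or pointwise computation.
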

\begin{proof} The left hand side is an invariant of degree 52, and so
  by Theorem~\ref{myinv} is a linear combination of $Q^{26}$,
  $Q^{13}c_6^2$ and $c_6^4$.  We may determine the correct linear
  combination by evaluating each side at some random points on $X$. We
  initially did this by working mod $p$ for some moderately sized
  prime $p$. To verify the answer in characteristic $0$, we used the
  point on $X$ defined over a degree 20 number field given by
  $(1 : 1 : \gamma : \ldots )$ %and implicitly all its conjugates,
  where $\gamma$ is a root of
\begin{align*}
 x^{20} +  5 x^{17} - 7 & x^{16} + 2 x^{15} + 10 x^{14} + x^{13} + 5 x^{12} +
    4 x^{11} - 21 x^{10} \\ & + 19 x^9 + 10 x^8 + 3 x^7 + 8 x^6 - 17 x^5 +
    5 x^3 + 2 x^2 + 1 = 0.  \qedhere
\end{align*} 
\end{proof}

We now twist the invariants $Q$ and $F$. Specifically we put
\begin{align*}
{\QQ}(y_1, \ldots, y_7) &= Q(y_1 \vv_1 + y_2  \vv_3 + y_3 \vv_4 + y_4 \vv_9
  + y_5 \vv_{10} + y_6 \vv_{12} + y_7 \vv_{13}) \\
{\mathcal F}(y_1, \ldots, y_7) &= F(y_1 \vv_1 + y_2  \vv_3 + y_3 \vv_4 + y_4 \vv_9
  + y_5 \vv_{10} + y_6 \vv_{12} + y_7 \vv_{13})
\end{align*}
Each of the coefficients of these forms is an invariant, and so
working modulo $I(X)$ may be written as a polynomial in $Q$ and $c_6$.
By a series of computations similar to the proof of
Lemma~\ref{lem:det1} we find that
\begin{align*}
\QQ(y_1, &\ldots, y_7) = Q y_1^2 - 4 Q^2 y_1 y_2 + 44 Q^5 y_1 y_4 
   + c_6 y_1 y_6 - 36 Q^7 y_1 y_7 - 2 Q^3 y_2^2 \\ &
   - 124 Q^6 y_2 y_4 - c_6 y_2 y_5 + 300 Q^8 y_2 y_7 + 
    6 Q^4 y_3^2 + c_6 y_3 y_4 + 324 Q^7 y_3 y_5 \\ & - 12 Q^8 y_3 y_6
   - 2 Q^2 c_6 y_3 y_7 + 502 Q^9 y_4^2 + 24 Q^3 c_6 y_4 y_5 + 
    27 Q^4 c_6 y_4 y_6 \\ & - 396 Q^{11} y_4 y_7 
   + 4302 Q^{10} y_5^2 - 36 Q^{11} y_5 y_6 
    - 35 Q^5 c_6 y_5 y_7 + 150 Q^{12} y_6^2 \\ & 
    - 54 Q^6 c_6 y_6 y_7 - (3282 Q^{13} - c_6^2) y_7^2.
\end{align*}
The coefficient of $y_1^2$ is $\vv_1 \cdot \vv_1 = Q$, and the
coefficient of $y_3y_4$ is $\vv_4 \cdot \vv_9 = c_6$, which is how we
defined $c_6$ in the last section. We note that several of the
coefficients were forced to be zero by the fact there are no monomials
in $Q$ and $c_6$ of the appropriate degree.  In a similar way we
compute
\begin{align*}
\FF(y_1, &\ldots, y_7) =  -Q^2 y_1^4 - 4 Q^3 y_1^3 y_2 - 
    124 Q^6 y_1^3 y_4 - c_6 y_1^3 y_5 + 300 Q^8 y_1^3 y_7 \\ & - 
    6 Q^4 y_1^2 y_2^2 - 372 Q^7 y_1^2 y_2 y_4 - 
    3 Q c_6 y_1^2 y_2 y_5 + 900 Q^9 y_1^2 y_2 y_7 + 18 Q^5 y_1^2 y_3^2 \\
    & + \ldots + (307161 Q^{19} c_6 - 
    32 Q^6 c_6^3) y_6 y_7^3 - (24003375 Q^{26} + 408 Q^{13} c_6^2 - 
    2 c_6^4) y_7^4.
\end{align*}

We now make a change of coordinates to simplify $\QQ$ and $\FF$, and
so that we obtain correct formulae in the case $j(E) = 0$.  We put
$c_6 = -864b$ and substitute
\begin{align*}
y_1 &=   16 Q^6 (12 a^2 (x_1 + 2 x_2) + 18 a b x_3 + 14 (4 a^3 + 27 b^2) x_5
    + 81 b^2 x_6), \\
y_2 &=   2 Q^5 (96 a^2 x_2 - 144 a b (x_3 - 3 x_4) + (48 a^3 - 324 b^2) x_5
   \\ & \qquad\qquad\qquad\qquad\qquad\qquad
 + (52 a^3 - 297 b^2) x_6  - 864 a^2 b x_7), \\
y_3 &=   Q^{11} (2 a (x_3 - 56 x_4) + 3 b (44 x_5 + 3 x_6) + 176 a^2 x_7), \\
y_4 &=   -4 Q^2 (4 a^3 + 27 b^2) (2 x_5 - x_6), \\
y_5 &=   2 Q^8 (2 a x_4 - 3 b x_5 - 4 a^2 x_7), \\
y_6 &=   Q^7 (2 a (x_3 - 2 x_4) + 3 b (2 x_5 + 3 x_6) + 8 a^2 x_7), \\
y_7 &=   2 (4 a^3 + 27 b^2) x_6.
\end{align*}
This transformation has determinant
$-2^{23} 3^3 a^8 Q^{39} (4 a^3 + 27 b^2)^2$.  Dividing $\QQ$ and $\FF$
by $2^{16} 3^2 a^4 (4 a^3 + 27 b^2)$ and
$2^{32} 3^4 a^8 (4 a^3 + 27 b^2)^2$, and eliminating $Q$ by the rule
$Q^{13} = 16(4 a^3 + 27b^2)$, we obtain
\begin{align*}
\QQ(x_1, &\ldots, x_7) = 
   x_1^2 - 6 x_2^2 + a x_3^2 + 9 b x_3 x_6 - 6 a x_4^2 + 18 b x_4 x_5 +
  24 a^2 x_4 x_7 \\ & + 2 a^2 x_5^2 - 36 a b x_5 x_7 - 3 a^2 x_6^2 + 162 b^2 x_7^2,
\end{align*}
 and
\begin{align*}
\FF(x_1, & \ldots, x_7) =
   -x_1^4 - 12 x_1^3 x_2 - 54 x_1^2 x_2^2 - 18 a x_1^2 x_4^2 +
  54 b x_1^2 x_4 x_5 \\ & + 72 a^2 x_1^2 x_4 x_7 + 6 a^2 x_1^2 x_5^2 -
  108 a b x_1^2 x_5 x_7 + 486 b^2 x_1^2 x_7^2 - 60 x_1 x_2^3 \\ & + \ldots
   -432 a b (4 a^3 - 27 b^2) x_6 x_7^3 
   -3 (8 a^3 - 27 b^2)(40 a^3 - 27 b^2) x_7^4.
\end{align*}
These polynomials $\QQ$ and $\FF$ have weights $6$ and $12$ with
respect to the grading in Remark~\ref{rem:gradings}.

For $f$ and $g$ homogeneous polynomials in $x_1, \ldots,x_7$ we define
  \[ \langle f,g \rangle = \trace ( H(f) H(\QQ)^{-1} H(g) H(\QQ)^{-1} ).\]
The proof of the next theorem is similar to that of \cite[Lemmas 3.7
and 3.12]{7and11}.

\begin{Theorem}
\label{thm:XE13-dir}
Let $E/K$ be the elliptic curve $y^2 = x^3 +a x+ b$.  Let
$f_1,\ldots,f_7$ be a basis for the space of cubic forms $f$
satisfying $\langle f, \FF - 3 \QQ^2 \rangle = 0$.  Then
\[X_E(13,1) \isom \{ f_1 = \ldots = f_7 = \FF + \QQ^2 = 0 \} \subset
\PP^6.\]
\end{Theorem}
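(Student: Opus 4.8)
\emph{Proof proposal.}
The plan is to realise $X_E(13,1)$ as the twist of $X(13)\subset\PP^6$ described by Lemma~\ref{lem:twist}, with the twisting matrix assembled from the covariants of \S\ref{sec:XE(13,1)}, and then to carry the description of $X(13)$ in Theorem~\ref{thm:QF->curve}(iii) across this twist. Write $d=(d_1,\dots,d_7)=(1,3,4,9,10,12,13)$ for the degrees of the covariants $\vv_1,\vv_3,\vv_4,\vv_9,\vv_{10},\vv_{12},\vv_{13}$, and $e=(e_1,\dots,e_7)=(6,5,11,2,8,7,0)$ for the powers of $Q$ occurring in the substitution $y_i=y_i(x_1,\dots,x_7;a,b)$ of \S\ref{sec:XE(13,1)}; the crucial fact will be that $d_i+2e_i\in\{13,26\}$ for every $i$.

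\emph{The twisting matrix.} Fix a point of $X(13)$ above $j(E)$ and a representative vector $\tilde P\in\C^7$ with $c_6(\tilde P)=-864b$. By Theorem~\ref{myinv} this forces $Q(\tilde P)^{13}=16(4a^3+27b^2)$, and (since $-864b$ is the usual $c_6$-invariant of $E$) it places $\tilde P$ over $E$ itself, pinned down up to a $13$th root of unity. Let $M=M(\tilde P)$ be the matrix with columns $\vv_{d_i}(\tilde P)$, let $N=N(\tilde P)$ be the matrix of the substitution $y_i=y_i(x_1,\dots,x_7;a,b)$, whose $i$-th row is $Q(\tilde P)^{e_i}$ times a row of polynomials in $a,b$, and set $h=MN$. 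By Lemma~\ref{lem:det1}, $\det M=(c_6^2-1728Q^{13})^2$ at $\tilde P$, which together with the value of $\det N$ recorded in \S\ref{sec:XE(13,1)} shows $h\in\GL_7$ whenever $j(E)\neq0$.

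\emph{The cocycle.} The covariants are defined over $\Q$ (being built from $Q$, $F$ and the matrix $N$ of Lemma~\ref{lem:N}, all of which have rational coefficients), so $\sigma(M)=M(\sigma\tilde P)$ for $\sigma\in\Gal(\Kbar/K)$; and since $X(13)$ and its $j$-map are defined over $\Q$ with $j(E),b\in K$, the point $\sigma\tilde P$ again lies above $j(E)$ with $c_6(\sigma\tilde P)=-864b$, whence $\sigma\tilde P=\lambda_\sigma\,\rho(\gamma_\sigma)\tilde P$ for some $\lambda_\sigma\in\mu_{13}$, with $\gamma_\sigma=\sigma(\phi)\phi^{-1}\in\Gamma$ and $\phi$ the symplectic structure carried by $\tilde P$. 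Using $\vv_{d_i}\circ g=g\,\vv_{d_i}$ and homogeneity, $M(\sigma\tilde P)=\rho(\gamma_\sigma)\,M\,\diag(\lambda_\sigma^{d_i})$, while $G$-invariance of $Q$ (so $\sigma(Q(\tilde P))=\lambda_\sigma^{2}Q(\tilde P)$) gives $\sigma(N)=\diag(\lambda_\sigma^{2e_i})\,N$. Hence
\[
\sigma(h)h^{-1}=\rho(\gamma_\sigma)\,M\,\diag\bigl(\lambda_\sigma^{\,d_i+2e_i}\bigr)\,M^{-1}=\rho(\sigma(\phi)\phi^{-1}),
\]
the last equality because $\lambda_\sigma^{13}=1$ and $d_i+2e_i\in\{13,26\}$ (the same congruence also makes $h$ independent of the $13$th-root ambiguity in $\tilde P$). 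So Lemma~\ref{lem:twist}, with $k=1$ and $\sigma_1=\mathrm{id}$, identifies $X_E(13,1)$ with $\{x\in\PP^6:hx\in X(13)\}$.

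\emph{Transporting the equations, and the hard part.} By construction $Q\circ h$ and $F\circ h$ are scalar multiples — by $c(a,b)$ and $c(a,b)^2$ — of the displayed forms $\QQ$ and $\FF$; this is precisely the computation of \S\ref{sec:XE(13,1)}, whose reductions modulo $I(X)$ are legitimate because $hx$ lies on $X(13)$, and here too it is $d_i+2e_i\equiv0\pmod{13}$ that lets one eliminate $Q$ in favour of $16(4a^3+27b^2)$. Thus $(F+Q^2)\circ h\propto\FF+\QQ^2$; and since the pairing~\eqref{brac} transforms covariantly under linear substitutions (so $\langle f\circ h,\,g\circ h\rangle_{Q\circ h}$ equals $(\langle f,g\rangle_Q)\circ h$ up to a nonzero scalar), the $h$-pullback of the $7$-dimensional space $W$ of Theorem~\ref{thm:QF->curve}(i) is exactly the space of cubics $f$ with $\langle f,\FF-3\QQ^2\rangle=0$. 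Substituting $x\mapsto hx$ into $\{f_0=\dots=f_6=F+Q^2=0\}$ from Theorem~\ref{thm:QF->curve}(iii) then gives $\{f_1=\dots=f_7=\FF+\QQ^2=0\}$, which equals $X_E(13,1)$ by the previous step; carrying this out over the function field $\Q(a,b)$ and specialising (all forms being polynomial in $a,b$) covers every $E$ with $4a^3+27b^2\neq0$, including $j(E)=0$. I expect the main obstacle to be the cocycle step: checking that the normalisations of $Q$, $c_6$ and $\phi$ are mutually compatible, so that the Galois cocycle really is $\sigma(\phi)\phi^{-1}$ and not a twist of it, and managing the scalar bookkeeping so that the residual diagonal matrix $\diag(\lambda_\sigma^{d_i+2e_i})$ collapses to the identity. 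Verifying $Q\circ h\propto\QQ$ and $F\circ h\propto\FF$ is the other computational burden, of the same character as the proof of Lemma~\ref{lem:det1}.
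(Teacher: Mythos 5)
Your treatment of the main case is essentially the paper's own proof: the same normalisation $c_6(\tilde P)=-864b$, $Q(\tilde P)^{13}=16(4a^3+27b^2)$, the same twisting matrix built from $\vv_1,\vv_3,\vv_4,\vv_9,\vv_{10},\vv_{12},\vv_{13}$, the same appeal to Lemma~\ref{lem:det1} for invertibility, and the same observation that $d_i+2e_i\equiv 0\pmod{13}$ kills the residual diagonal factor $\diag(\lambda_\sigma^{d_i+2e_i})$ so that Lemma~\ref{lem:twist} applies and Theorem~\ref{thm:QF->curve} transports across $h$.

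There is, however, a genuine gap at $j(E)=0$ and $j(E)=1728$, which the theorem as stated does not exclude. Your closing claim that working over $\Q(a,b)$ and ``specialising'' covers every $E$ with $4a^3+27b^2\neq 0$, including $j(E)=0$, does not work, and is in tension with your own earlier remark that $h\in\GL_7$ only when $j(E)\neq 0$: at $a=0$ the determinant of the composite transformation contains the factor $a^8$ (equivalently, $c_6^2-1728Q^{13}=-110592\,a^3$ under your normalisation), so the twisting matrix degenerates and there is nothing to specialise. More fundamentally, the identification of the specialised model with $X_{E_0}(13,1)$ is a statement about the Galois cocycle $\sigma(\phi)\phi^{-1}$ of the particular curve $E_0/K$, which is not a polynomial datum that specialises from the generic fibre. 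The case $b=0$ fails for a different reason: with $c_6(\tilde P)=0$ the scaling of $\tilde P$ is pinned down only by $Q(\tilde P)^{13}=64a^3$, i.e.\ up to a \emph{26th} root of unity, so one can only conclude $\lambda_\sigma^{13}=\pm 1$ and the cocycle computation leaves behind the non-scalar matrix $\diag(\lambda_\sigma^{13m_i})$ with $m_i\in\{1,2\}$ --- precisely the involution $\iota$ that governs the quartic-twist structure at $j=1728$. The paper devotes Section~\ref{sec:j=0,1728} to these cases, using an entirely different argument: a $2$-isogeny relation $X_E(13,1)\isom X_F(13,2)$ reduces to the single curves $E_{1,0}$ and $E_{0,1}$, and then an analysis of the automorphisms $\iota$ and $\eps$ of the model (matched against $\Aut(E)=\mu_4$ or $\mu_6$ via Lemma~\ref{lem:alphabeta}) handles general $E_{a,0}$ and $E_{0,b}$. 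You would need to either restrict the statement to $j\neq 0,1728$ or supply an argument of this kind.
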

\begin{proof}
  We assume that $j(E) \not=0,1728$, equivalently $ab \not= 0$,
  leaving the remaining cases to Section~\ref{sec:j=0,1728}.  Let
  $(x_0: \ldots : x_6)$ be a $\Kbar$-point on $X(13)$ corresponding to
  $(E,\phi)$ for some choice of symplectic isomorphism
  $\phi : E[13] \to \mu_{13} \times \Z/13\Z$. By Theorem~\ref{myinv},
  and the formula $j(E) = 1728(4a^3)/(4a^3 + 27b^2)$, we may scale
  $(x_0, \ldots, x_6)$ to satisfy
  $Q(x_0, \ldots,x_6)^{13} = 16(4a^3 + 27b^2)$ and
  \begin{equation} \label{scalings} c_6(x_0,\ldots, x_6) =
    -864b. \end{equation}

  Let $h$ be the $7 \times 7$ matrix formed by evaluating the
  covariants
  \begin{equation}
    \label{covlist} 
    Q^6 \vv_1, \, Q^5 \vv_3, \, Q^{11} \vv_4, \, Q^2 \vv_9, \, Q^8 \vv_{10}, \, Q^7 \vv_{12}, \, \vv_{13} 
  \end{equation}
  at $(x_0, \ldots, x_6)$.  By Lemma~\ref{lem:det1} and our assumption
  $a \not= 0$ this matrix is non-singular. Let
  $\rho : \SL_2(\Z/13\Z) \to \GL_7(\Kbar)$ describe the action of
  $\PSL_2(\Z/13\Z)$ on $X(13)$. We claim that
  \begin{equation}
    \label{claim}
    \sigma(h) h^{-1} \propto \rho (\sigma(\phi) \phi^{-1}) 
  \end{equation}
  for all $\sigma \in \Gal(\Kbar/K)$. Let
  $\xi_\sigma =\sigma(\phi) \phi^{-1} \in \SL_2(\Z/13\Z)$.  Since
  $(\sigma(x_0): \ldots : \sigma(x_6))$ corresponds to
  $(E, \sigma(\phi))$ we have
  \begin{equation}
    \label{cocycle}
    \begin{pmatrix} \sigma(x_0) \\ \vdots \\ \sigma(x_6) \end{pmatrix} 
    = \lambda_\sigma \rho(\xi_\sigma) \begin{pmatrix} x_0 \\ \vdots \\ x_6 \end{pmatrix} 
  \end{equation}
  for some $\lambda_\sigma \in \Kbar^\times$. Then since $c_6$ is an
  invariant of degree $13$ we have
  \[ \sigma(c_6(x_0, \ldots,x_6)) = \lambda_\sigma^{13}
  c_6(x_0,\ldots,x_6). \]
  By~\eqref{scalings} and our assumption $b \not=0$, it follows that
  $\lambda_\sigma$ is a 13th root of unity. Since the
  covariants~\eqref{covlist} all have degree a multiple of $13$, our
  claim~\eqref{claim} now follows from~\eqref{cocycle} and the
  definition of a covariant (see Definition~\ref{def:cov}).

  Finally, Lemma~\ref{lem:twist} shows that $X_E(13,1) \subset \PP^6$
  is obtained from $X(13) \subset \PP^6$ by making the change of
  coordinates given by $h$, and Theorem~\ref{thm:QF->curve} shows how
  we may recover equations for the curve from $\QQ$ and $\FF$.
\end{proof}

The cubic forms $f_1, \ldots, f_7$, as polynomials with coefficients
in $\Z[a,b]$, are available from~\cite{magmafile}. Alternatively, they
may be computed from the description given in
Section~\ref{sec:state-curves}.

Next we compute the $j$-map $X_E(13,1) \to \PP^1$.  Revisiting
Lemma~\ref{lem:N} with $\QQ$ and $\FF$ in place of $Q$ and $F$ we
obtain a skew symmetric matrix $\NN$ given by
$\NN_{ij} = (\frac{\partial}{\partial x_i} \wedge
\frac{\partial}{\partial x_j}) \Phi$ where
\begin{align*} \Phi 
  &=  12 (x_1 \wedge x_2 \wedge x_7) - 2 (x_1 \wedge x_3 \wedge x_5) 
    - 3 (x_1 \wedge x_3 \wedge x_6) + 6 (x_1 \wedge x_4 \wedge x_5) \\ 
  & + 6 (x_1 \wedge x_4 \wedge x_6) + 12 a (x_1 \wedge x_5 \wedge x_7) 
    + 12 a (x_1 \wedge x_6 \wedge x_7) - 6 (x_2 \wedge x_3 \wedge x_5) \\ 
  & - 6 (x_2 \wedge x_3 \wedge x_6) + 12 (x_2 \wedge x_4 \wedge x_5) 
    + 18 (x_2 \wedge x_4 \wedge x_6) + 24 a (x_2 \wedge x_5 \wedge x_7) \\ 
  & + 36 a (x_2 \wedge x_6 \wedge x_7) - 12 a (x_3 \wedge x_4 \wedge x_7) 
    + 18 b (x_3 \wedge x_5 \wedge x_7) + 54 b (x_4 \wedge x_6 \wedge x_7) \\ 
  & + 12 a^2 (x_5 \wedge x_6 \wedge x_7).
\end{align*}
We put $\vvv_3 = \nabla_{\!\QQ}\, \FF$, $\vvv_4 = H(\QQ)^{-1} \NN \vvv_3$,
$\vvv_{9} = \vvv_3 \circ \vvv_3$ and
$\cc_6 = \coeff ( \QQ( \vvv_4 + t \vvv_9) , t)$.
%  $\cc_6 = \vv_4 \cdot \vv_9$.
The map $j : X_E(13,1) \to \PP^1$ satisfies $j - 1728 = \xi \cc_6^2/\QQ^{13}$
for some constant $\xi$. Evaluating at the 
tautological point $(1:0: \ldots :0) \in X_E(13,1)$ we find
\[ \QQ(1,0, \ldots, 0) = 1 \quad \text{ and } \quad \cc_6(1,0, \ldots,
0) = -216 b/(4 a^3 + 27 b^2). \]
Since $j(E) = 1728(4a^3)/(4a^3 + 27b^2)$ it follows that
$\xi = -(4 a^3 + 27 b^2)$ and so
\[ j = 1728 - \frac{(4 a^3 + 27 b^2) \cc_6^2}{\QQ^{13}}. \]

\begin{Remark}
\label{rem:g2}
(i) The quadratic form $\QQ$ may be recovered from $\Phi$ as
the GCD of the $6 \times 6$ Pfaffians of $\NN$ where
$\NN_{ij} = (\frac{\partial}{\partial x_i} 
       \wedge \frac{\partial}{\partial x_j}) \Phi$. \\
(ii) We may simplify $\QQ$ and $\Phi$ by making the further change 
of coordinates
\begin{align*}
x_1 &=  -a u_1 - a^2 u_2 + 9 b u_3 + 9 b u_4 
          - (3/2) b u_5 - 3 a u_6 + 2 a^2 u_7, \\
x_2 &=  (1/2) a u_1 + (1/3) a^2 u_2
          - (9/2) b u_3 - 3 b u_4 + (3/4) b u_5 + a u_6 - a^2 u_7, 
\end{align*}
$x_3 =  3 b u_2 + 2 a u_4$, 
$x_4 =  2 a u_3 - 3 b u_7$, 
$x_5 =  -(1/2) u_1 - a u_7$, 
$x_6 =  -(1/3) a u_2 + u_6$, 
$x_7 =  (1/2) u_3 + (1/12) u_5$.
Then $\QQ = u_1 u_7 + u_2 u_6 + u_3 u_5 + u_4^2$ and
\begin{equation}
\label{Phi:k=1}
 \Phi = (u_1 \wedge u_2 \wedge u_3) + (u_1 \wedge u_4 \wedge u_7) 
        + (u_2 \wedge u_4 \wedge u_6) + (u_3 \wedge u_4 \wedge u_5)
        + (u_5 \wedge u_6 \wedge u_7). 
\end{equation}
In particular these expressions do not depend on $a$ and $b$.
Unfortunately, this change of coordinates
makes $\FF$ more complicated. \\
(iii) The alternating forms~\eqref{altform} and~\eqref{Phi:k=1} differ
by a relabelling of the variables.  In computing our equations for
$X_E(13,1)$ from those for $X(13)$ we have therefore twisted by a
cocycle taking values in the stabiliser of $\Phi$. According to
\cite[Proposition~22.12]{FH} this stabiliser is the $14$-dimensional
exceptional Lie group $G_2$.
\end{Remark}

\subsection{Equations for $X_E(13,2)$}
\label{sec:XE(13,2)}

Let $G \isom \PSL_2(\Z/13\Z)$ be the subgroup of $\SL_7(\C)$ defined
in Section~\ref{sec:X13}.  We write $g \mapsto \widetilde{g}$ for the
automorphism of $G$ induced by $\zeta \mapsto \zeta^2$.

\begin{Definition}
\label{def:skewcov}
A {\em skew covariant} of degree $d$ is a column vector $\ww$ of
polynomials in $\C[x_0,\ldots,x_6]_d$ satisfying
$\ww \circ g = \widetilde{g} \ww$ for all $g \in G$.
\end{Definition}

Our first example of a skew covariant is
$\ww_3 = (f_0,f_1, \ldots, f_6)^T$ where the $f_i$ are the cubic
polynomials vanishing on $X(13)$, as defined in the proof of
Theorem~\ref{thm:QF->curve}.  Let $\ww_4 = (g_0,g_1, \ldots,g_6)^T$ be
the skew covariant of degree $4$ where
\begin{align*}
g_0 &= 4 x_0 (x_1 x_3 x_5 - x_2 x_4 x_6)
        + x_1 x_2^3 - x_2 x_3^3 + x_3 x_4^3 - x_4 x_5^3 + x_5 x_6^3 - x_6 x_1^3, \\
g_1 &= 4 x_0^2 x_1^2 - 4 x_0^2 x_3 x_4 + 4 x_0 x_1 x_2 x_6 - 2 x_0 x_3^2 x_5 - 2 x_0 x_5^2 x_6 - 
    x_1^3 x_4 + x_1^2 x_2 x_5 \\ & - 2 x_1 x_3 x_4^2 - x_1 x_5^3 - x_2^3 x_3 - 2 x_2^2 x_6^2 + 
    4 x_2 x_3 x_4 x_5 + x_3^2 x_4 x_6 + x_4 x_5 x_6^2,
\end{align*}
and the remaining $g_i$ are obtained from $g_1$ by the action of
$M_6$, i.e. by cyclically permuting the subscripts $1,2, \ldots, 6$
and alternating the signs.  We note that the polynomials
$g_0,g_1, \ldots,g_6$ vanish at the cusps of $X(13)$, but do not
vanish identically on $X(13)$, and are not divisible by $Q$.  They
therefore account for the discrepancy (in degree $2$) between the
rings $S$ and $S'$ in Lemma~\ref{lem:notpn}.

We construct further skew covariants by precomposing with a covariant.
\begin{align*}
\ww_5 &= \coeff(\ww_3 \circ (\vv_1 + t \vv_3) ,t) \qquad & 
\ww_8 &= \coeff(\ww_4 \circ (\vv_1 + t \vv_3) ,t^2)\\
\ww_6 &= \coeff(\ww_4 \circ (\vv_1 + t \vv_3) ,t) & 
\ww_{11} &= \coeff(\ww_5 \circ (\vv_1 + t \vv_3) ,t^3)\\
\ww_7 &= \coeff(\ww_3 \circ (\vv_1 + t \vv_3) ,t^2) & 
\ww_{13} &= \coeff(\ww_5 \circ (\vv_1 + t \vv_3) ,t^4)
\end{align*}

\begin{Lemma} 
\label{lem:det2}
We have 
\[ 
\det (\ww_4, \ww_5, \ww_6, \ww_7, \ww_8, \ww_{11}, \ww_{13} )
 = 2 Q (c_6^2 - 1728 Q^{13})^2 \mod{ I(X) }. \]
\end{Lemma}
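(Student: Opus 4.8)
The plan is to recognise the left-hand side as a genuine $G$-invariant and then to pin it down using Theorem~\ref{myinv}. Write $M$ for the $7 \times 7$ matrix whose columns are $\ww_4, \ww_5, \ww_6, \ww_7, \ww_8, \ww_{11}, \ww_{13}$. Each column is a skew covariant, so $M \circ g = \widetilde{g}\, M$ for all $g \in G$, and hence $\det(M) \circ g = \det(\widetilde{g}) \det(M) = \det(M)$ because $\widetilde{g} \in \SL_7(\C)$. Thus $\det(M)$ is an invariant (not merely a skew invariant) and it is homogeneous of degree $4 + 5 + 6 + 7 + 8 + 11 + 13 = 54$.

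By Theorem~\ref{myinv} we have $S^G = \C[Q, c_6]$, so modulo $I(X)$ the invariant $\det(M)$ is a $\C$-linear combination of the monomials $Q^a c_6^b$ with $2a + 13b = 54$, namely $Q^{27}$, $Q^{14} c_6^2$ and $Q\, c_6^4$. The claimed answer $2Q(c_6^2 - 1728 Q^{13})^2$ --- which is exactly $2Q$ times the right-hand side of Lemma~\ref{lem:det1} --- has this shape, so the task reduces to determining three scalars. To fix them, I would evaluate both sides at three points of $X$ with pairwise distinct $j$-invariants: at such a point $P$ the value of $Q$ is nonzero, and dividing through by $Q(P)^{27}$ turns the three equations into the evaluation of a quadratic polynomial in $c_6(P)^2/Q(P)^{13} = 1728 - j(P)$ at three distinct arguments, a non-degenerate interpolation problem. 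Convenient explicit points are the one above $j = 0$ and the one above $j = 1728$ recorded in and just after the proof of Theorem~\ref{myinv}. Alternatively, exactly as in the proof of Lemma~\ref{lem:det1}, I would first carry out the evaluation by reduction modulo a moderately sized prime, and then confirm the result in characteristic $0$ using the point on $X$ over the degree-$20$ number field exhibited there.

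The main obstacle is purely computational: one must evaluate the higher-degree skew covariants $\ww_{11}$ and $\ww_{13}$ --- each obtained from $\ww_3$, $\ww_4$ and the covariant $\vv_3$ by repeated substitution $x \mapsto \vv_1 + t \vv_3$ and extraction of a coefficient of $t$ --- at a point with coordinates in a number field, using exact arithmetic, and then check that the characteristic-$0$ value genuinely agrees rather than matching only modulo the chosen prime. One might hope instead to read off the factor $Q(c_6^2 - 1728 Q^{13})^2$ structurally, using that $\ww_3$ vanishes on $X$ and that $\ww_5, \ww_7, \ww_{11}, \ww_{13}$ are iterated directional derivatives of $\ww_3$ along $\vv_3$, so that they, and hence the determinant, acquire vanishing along certain $G$-invariant subschemes of $X$; but turning this into a rigorous identification of all three scalars seems no easier than the direct evaluation, so I would rely on the latter.
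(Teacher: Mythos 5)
Your proposal is correct and follows essentially the same route as the paper: the paper's proof of Lemma~\ref{lem:det2} simply refers back to Lemma~\ref{lem:det1}, whose argument is exactly yours --- the determinant is a $G$-invariant of degree $54$, hence by Theorem~\ref{myinv} a linear combination of $Q^{27}$, $Q^{14}c_6^2$ and $Qc_6^4$ modulo $I(X)$, with the three coefficients pinned down by evaluation at points (first modulo a moderate prime, then verified in characteristic $0$ at an explicit algebraic point). Your explicit check that $\det(\widetilde{g})=1$ makes the determinant a genuine (not skew) invariant, and your interpolation remark about three distinct $j$-values, are welcome elaborations of details the paper leaves implicit.
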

\begin{proof} The proof is similar to that of Lemma~\ref{lem:det1}.
  The factor $Q$ on the right hand side arises since the entries of
  $\ww_4$ vanish at the cusps.
\end{proof}

We now twist the invariants $Q$ and $F$. Specifically we put
\begin{align*}
{\QQ}(y_1, \ldots, y_7) &= Q(y_1 \ww_4 + y_2  \ww_5 + y_3 \ww_6 + y_4 \ww_7
  + y_5 \ww_8 + y_6 \ww_{11} + y_7 \ww_{13}) \\
{\mathcal F}(y_1, \ldots, y_7) &= F(y_1 \ww_4 + y_2  \ww_5 + y_3 \ww_6 + y_4 \ww_7
  + y_5 \ww_8 + y_6 \ww_{11} + y_7 \ww_{13})
\end{align*}
Each of the coefficients of these forms is an invariant, and so
working modulo $I(X)$ may be written as a polynomial in $Q$ and $c_6$.
By a series of computations similar to the proof of
Lemma~\ref{lem:det1} we find that
\begin{align*}
\QQ(y_1, &\ldots, y_7) =
2 Q^4 y_1^2 + 16 Q^5 y_1 y_3 - 72 Q^6 y_1 y_5 + Q c_6 y_1 y_6 + 6 Q^5 y_2^2 \\
  & - 2 c_6 y_2 y_5 - 864 Q^8 y_2 y_6 + 1932 Q^9 y_2 y_7 - 4 Q^6 y_3^2 + c_6 y_3 y_4 - 
    72 Q^7 y_3 y_5 \\ & - 20 Q^2 c_6 y_3 y_6 - 42 Q^3 c_6 y_3 y_7 - 12 Q^7 y_4^2 - 
    3 Q c_6 y_4 y_5 + 576 Q^9 y_4 y_6 \\ & + 1008 Q^{10} y_4 y_7 + 612 Q^8 y_5^2 + 
    198 Q^3 c_6 y_5 y_6 - 196 Q^4 c_6 y_5 y_7 \\ & + 24408 Q^{11} y_6^2 - 
    163296 Q^{12} y_6 y_7 + (132630 Q^{13} + c_6^2) y_7^2,
\end{align*}
and 
\begin{align*}
\FF(y_1, &\ldots, y_7) = 
5 Q^8 y_1^4 + 8 Q^9 y_1^3 y_3 + Q^3 c_6 y_1^3 y_4 - 144 Q^{10} y_1^3 y_5 - 
    19 Q^5 c_6 y_1^3 y_6 \\ & - 42 Q^6 c_6 y_1^3 y_7 + 54 Q^9 y_1^2 y_2^2 - 
    6 Q^3 c_6 y_1^2 y_2 y_3 + 144 Q^{10} y_1^2 y_2 y_4 \\ 
& + \ldots + (8864253225 Q^{26} - 1969502 Q^{13} c_6^2 + 2 c_6^4) y_7^4.
\end{align*}

We put $c_6 = -864b$ and substitute
\begin{align*}
  y_1 &=  6 Q^{11} (36 b x_1 + 90 b x_2 - 2 a^2 (5 x_3 + 8 x_4 + 2 x_5) - 9 a b x_6 + 42 a b x_7), \\
  y_2 &=  4 Q^4 (4 a^3 + 27 b^2) (1347 x_1 - 936 x_2 - 317 a x_6 - 185 a x_7), \\
  y_3 &=  3 Q^{10} (36 b x_1 - 18 b x_2 + 2 a^2 (x_3 - 8 x_4 - 14 x_5) + 9 a b x_6 + 6 a b x_7), \\
  y_4 &=  24 Q^3 (4 a^3 + 27 b^2) (21 x_1 - 36 x_2 - a x_6 + 9 a x_7), \\
  y_5 &=  3 Q^9 (18 b x_2 - 2 a^2 (x_3 + 2 x_5) + 3 a b x_6 + 6 a b x_7), \\
  y_6 &=  8 Q (4 a^3+27 b^2) (6 x_1 - 6 x_2 - a x_6), \\
  y_7 &=  -4 (4 a^3+27 b^2) (3 x_1 - a x_6 - a x_7).
\end{align*}
This transformation has determinant
$2^{26} 3^8 a^8 Q^{38} (4 a^3 + 27 b^2)^4$.  Dividing $\QQ$ and $\FF$
by $2^{18} 3^4 a^4 (4 a^3 + 27 b^2)^2$ and
$2^{32} 3^8 a^8 (4 a^3 + 27 b^2)^3$, and eliminating $Q$ by the rule
$Q^{13} = 16(4 a^3 + 27b^2)$, we obtain
\begin{align*}
\QQ(x_1,  \ldots, x_7) = x_1 x_7 + x_2 x_6 + x_3 x_5 + x_4^2,
\end{align*}
and 
\begin{align*}
  \FF(&x_1,  \ldots, x_7) = 
        48 a x_1^3 x_2 + 36 b x_1^3 x_3 + 72 b x_1^3 x_5 + 8 a^2 x_1^3 x_6 + 16 a^2 x_1^3 x_7 \\ 
      & - 144 a x_1^2 x_2^2 - 216 b x_1^2 x_2 x_3 + 432 b x_1^2 x_2 x_4 + 432 b x_1^2 x_2 x_5 - 
        48 a^2 x_1^2 x_2 x_7 \\
      & + \ldots  + 12a^2(a^3 + 9 b^2) x_6^2 x_7^2 + 56a^2(a^3 + 7 b^2) x_6 x_7^3 
        + 16a^2(3 a^3 + 20 b^2) x_7^4.
\end{align*}
These polynomials $\QQ$ and $\FF$ have weights $2$ and $10$ with
respect to the grading in Remark~\ref{rem:gradings}.

For $f$ and $g$ homogeneous polynomials in $x_1, \ldots,x_7$ we define
  \[ \langle f,g \rangle = \trace ( H(f) H(\QQ)^{-1} H(g) H(\QQ)^{-1} ).\]
\begin{Theorem}
\label{thm:XE13-skew}
Let $E/K$ be the elliptic curve $y^2 = x^3 +a x+ b$.  Let
$f_1,\ldots,f_7$ be a basis for the space of cubic forms $f$
satisfying $\langle f, \FF - 48(4a^3 + 27b^2) \QQ^2 \rangle = 0$.
Then
\[ X_E(13,2) \isom \{ f_1 = \ldots = f_7 = \FF + 16 (4a^3 + 27b^2)
\QQ^2 = 0 \} \subset \PP^6.\]
\end{Theorem}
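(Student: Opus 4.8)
The plan is to follow the template established in the proof of Theorem~\ref{thm:XE13-dir}, replacing the rôle of covariants by that of skew covariants and the rôle of $\rho$ by that of $\sigma_2 \rho$. Concretely, let $(x_0 : \ldots : x_6)$ be a $\Kbar$-point of $X(13)$ corresponding to a pair $(E, \phi)$, where $\phi : E[13] \to \mu_{13} \times \Z/13\Z$ is a symplectic isomorphism. As in Theorem~\ref{myinv} we may scale the coordinates so that $Q(x_0, \ldots, x_6)^{13} = 16(4a^3 + 27b^2)$ and $c_6(x_0, \ldots, x_6) = -864 b$, using the hypothesis $j(E) \neq 0, 1728$ (i.e.\ $ab \neq 0$); the cases $j(E) = 0, 1728$ are deferred to Section~\ref{sec:j=0,1728}. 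Write $\xi_\sigma = \sigma(\phi)\phi^{-1} \in \SL_2(\Z/13\Z)$. Since $(\sigma(x_0) : \ldots : \sigma(x_6))$ corresponds to $(E, \sigma(\phi))$, equation~\eqref{cocycle} holds again: $(\sigma(x_i))_i = \lambda_\sigma \rho(\xi_\sigma)(x_i)_i$ for some $\lambda_\sigma \in \Kbar^\times$, and by invariance of $c_6$ together with the scaling~\eqref{scalings} we again deduce that $\lambda_\sigma$ is a $13$th root of unity.

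The key point is now to track how the skew covariants $\ww_4, \ww_5, \ww_6, \ww_7, \ww_8, \ww_{11}, \ww_{13}$ transform. By Definition~\ref{def:skewcov}, a skew covariant $\ww$ of degree $d$ satisfies $\ww \circ g = \widetilde{g}\, \ww$, where $\widetilde{g}$ is the image of $g$ under the automorphism induced by $\zeta \mapsto \zeta^2$; concretely this means $\ww(\rho(g)\mathbf{x}) = \sigma_2\rho(g)\, \ww(\mathbf{x})$. Hence if $h$ denotes the $7 \times 7$ matrix obtained by evaluating suitable scalar multiples $Q^{e_i} \ww_{d_i}$ of these skew covariants at $(x_0, \ldots, x_6)$, with each degree $d_i$ augmented by a power of $Q$ so that $e_i \cdot 2 + d_i \equiv 0 \pmod{13}$ (so the $13$th-root-of-unity factor $\lambda_\sigma$ is killed), we get $\sigma(h) = \lambda_\sigma^{?}\,\sigma_2\rho(\xi_\sigma)\, h$ and therefore $\sigma(h) h^{-1} \propto \sigma_2\rho(\sigma(\phi)\phi^{-1})$ for all $\sigma \in \Gal(\Kbar/K)$. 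This is exactly the cocycle condition~\eqref{cocycle1} of Lemma~\ref{lem:twist} in the case $k = 2$, so that Lemma $\ref{lem:twist}$ identifies $X_E(13,2) \subset \PP^6$ with the twist of $X(13) \subset \PP^6$ by the change of coordinates $h$. Lemma~\ref{lem:det2} guarantees that $h$ is non-singular (the determinant being a nonzero multiple of $Q(c_6^2 - 1728 Q^{13})^2$, nonvanishing on $X$ since $ab \neq 0$).

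It then remains to transport the defining equations of $X(13)$ from Theorem~\ref{thm:QF->curve} through this change of coordinates and to recognise the result in the stated form. Under $\mathbf{x} \mapsto h\mathbf{x}$, the invariant $Q$ pulls back to $\QQ$ and $F$ pulls back to $\FF$ (up to the explicit scalars by which we divided, and after eliminating $Q$ via $Q^{13} = 16(4a^3 + 27b^2)$); since $X(13)$ is cut out by the cubic forms $f$ with $\langle f, F - 3Q^2\rangle = 0$ together with $F + Q^2 = 0$, the twisted curve is cut out by the cubic forms $f$ with $\langle f, \FF - 3\QQ^2\rangle = 0$ together with $\FF + \QQ^2 = 0$, where now $\langle\,,\,\rangle$ is the twisted bracket built from $H(\QQ)$. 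Finally one absorbs the scaling: because $\QQ$ here is the \emph{normalised} quadratic form (of weight $2$ rather than $6$) obtained after dividing by $2^{18}3^4 a^4(4a^3+27b^2)^2$, the true relation between the two twisted invariants $Q$ and $F$ becomes $F + Q^2 = 0$ but in the normalised variables this reads $\FF + 16(4a^3+27b^2)\QQ^2 = 0$ and correspondingly $F - 3Q^2 = 0$ becomes $\FF - 48(4a^3+27b^2)\QQ^2 = 0$, the factor $16 = (4a^3+27b^2)/Q^{13}\cdot 256$ and $48 = 3 \cdot 16$ being exactly the discrepancy in homogeneity degrees recorded in Remark~\ref{rem:gradings}. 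I expect the main obstacle to be bookkeeping: one must verify that the power of $Q$ appended to each $\ww_{d_i}$ indeed makes all seven weighted degrees divisible by $13$ (so that the spurious $\lambda_\sigma$ cancels and~\eqref{cocycle1} holds on the nose), and one must get the normalisation constants $16$ and $48$ right by carefully tracking the divisions performed on $\QQ$ and $\FF$ against the substitution $Q^{13} = 16(4a^3+27b^2)$. These are routine but error-prone; the conceptual content is entirely parallel to Theorem~\ref{thm:XE13-dir}, with ``covariant'' replaced throughout by ``skew covariant'' and ``$\rho$'' by ``$\sigma_2\rho$''.
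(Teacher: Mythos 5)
Your proposal is correct and follows exactly the route the paper takes: its proof of this theorem simply says it is the same as that of Theorem~\ref{thm:XE13-dir}, except that the matrix $h$ is formed from the skew covariants $Q^{11}\ww_4, Q^4\ww_5, Q^{10}\ww_6, Q^3\ww_7, Q^9\ww_8, Q\ww_{11}, \ww_{13}$ (whose degrees are precisely the multiples of $13$ you require), with non-singularity from Lemma~\ref{lem:det2} and the cocycle identity $\sigma(h)h^{-1} \propto \widetilde{\rho(\sigma(\phi)\phi^{-1})}$ feeding into Lemma~\ref{lem:twist} with $k=2$. Your bookkeeping of the normalisation is also right: the ratio of the two divisors, $(2^{18}3^4a^4(4a^3+27b^2)^2)^2 / (2^{32}3^8a^8(4a^3+27b^2)^3) = 16(4a^3+27b^2)$, is exactly the constant appearing in the stated equations.
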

\begin{proof}
The proof is similar to that of Theorem~\ref{thm:XE13-dir},
except that we now form the matrix $h$ by evaluating the skew
covariants
\begin{equation*}
Q^{11} \ww_4,\, Q^{4} \ww_5, \, Q^{10} \ww_6, \, Q^{3} \ww_7, \, Q^{9} \ww_8, \, Q \ww_{11}, \, \ww_{13} 
\end{equation*}
and show using the definition of a skew covariant
(see Definition~\ref{def:skewcov}) that
\begin{equation*}
\sigma(h) h^{-1} \propto \widetilde{\rho (\sigma(\phi) \phi^{-1})} 
\end{equation*}
for all $\sigma \in \Gal(\Kbar/K)$. 
\end{proof}

Again the cubic forms $f_1, \ldots, f_7$, as polynomials with coefficients in 
$\Z[a,b]$, are available from~\cite{magmafile}. Alternatively, they may
be computed from the description given in Section~\ref{sec:state-curves}.

Next we compute the $j$-map $X_E(13,2) \to \PP^1$.  Revisiting
Lemma~\ref{lem:N} with $\QQ$ and $\FF$ in place of $Q$ and $F$ we
obtain a skew symmetric matrix $\NN$ given by
$\NN_{ij} = (\frac{\partial}{\partial x_i} \wedge
\frac{\partial}{\partial x_j}) \Phi$ where
\begin{equation*}
 \Phi = (x_1 \wedge x_4 \wedge x_7) - (x_1 \wedge x_5 \wedge x_6) - (x_2 \wedge x_3 \wedge x_7) - (x_2 \wedge x_4 \wedge x_6) - (x_3 \wedge x_4 \wedge x_5). 
\end{equation*}
We put $\vvv_3 = \nabla_{\!\QQ}\, \FF$,
$\vvv_4 = H(\QQ)^{-1} \NN \vvv_3$, $\vvv_{9} = \vvv_3 \circ \vvv_3$
and $\cc_6 = \coeff ( \QQ( \vvv_4 + t \vvv_9) , t)$.  The map
$j : X_E(13,2) \to \PP^1$ satisfies $j - 1728 = \xi \cc_6^2/\QQ^{13}$
for some constant $\xi$. In principle we could compute $\xi$ by
carefully keeping track of all the changes of coordinates and
rescalings described above, but in practice it is simpler to look at
some numerical examples.  We find that
\[ j = 1728 - \frac{\cc_6^2}{2^{40}(4 a^3 + 27 b^2)^{10} \QQ^{13}}. \]

\begin{Remark}
  We have arranged that the forms $\QQ$ and $\Phi$ do not depend on
  $a$ and $b$, and indeed, up to a relabelling of the variables, are
  the same as the forms we started with. Therefore, exactly as in
  Remark~\ref{rem:g2}, we have twisted by a cocycle taking values in
  $G_2$.
\end{Remark}

\subsection{The cases $j(E) = 0, 1728$}
\label{sec:j=0,1728} 
We have shown that the equations for $X_E(13,1)$ and $X_E(13,2)$ in
Theorems~\ref{thm:XE13-dir} and~\ref{thm:XE13-skew} are correct for
all elliptic curves $E$ with $j(E) \not= 0, 1728$. % In this section
We now remove this restriction. The first step is to show that if the
theorems are correct for some elliptic curve $E$ then they are correct
for any $2$-isogenous elliptic curve $F$.

Let $E$ be the elliptic curve $y^2 = x^3 + ax+b$, and let $F$ be the
elliptic curve $y^2 = x^3 + A x + B$ where $A = -15 \ta^2 - 4a$,
$B = 14 a \ta + 22 b$ and $\ta$ is a root of $x^3 + ax + b =0$.  If we
put $c = 3\ta$ and $d = 3 \ta^2 + a$ then $E$ is isomorphic to
$y^2 = x(x^2 + cx + d)$ and $F$ is isomorphic to
$y^2 = x((x - c)^2 - 4d)$.  In particular $E$ and $F$ are
$2$-isogenous.

Starting from the equations in Theorems~\ref{thm:XE13-dir}
and~\ref{thm:XE13-skew}, we find there is an isomorphism
$X_E(13,1) \isom X_F(13,2)$ given by
$(x_1 : \ldots:x_7) \mapsto (x'_1 : \ldots:x'_7)$ where
\begin{align*}
 x'_1 &=   (14 \ta^2 + 8 a) x_1 + 32 \ta^2 x_2 - 
  (3 a \ta - 25 b) x_3 + (38 a \ta - 18 b) x_4 - (18 a \ta^2 \\ & + 
   30 b \ta + 16 a^2) x_5 - (11 a \ta^2 - 3 b \ta + 24 a^2) x_6 
    + (144 b \ta^2 - 44 a^2 \ta + 132 a b) x_7, \\
 x'_2 &=   (14 \ta^2 + 8 a) x_2 + (3 a \ta + 7 b) x_3
    + (11 a \ta + 15 b) x_4 - (17 a \ta^2 - 9 b \ta + 8 a^2) x_5 \\ & - 
           (5 a \ta^2 - 21 b \ta + 8 a^2) x_6 + (144 b \ta^2 
     + 10 a^2 \ta + 66 a b) x_7, \\
 x'_3 &=   8 \ta x_1 + 8 \ta x_2 + 8 \ta^2 x_3 
   + (30 \ta^2 + 24 a) x_4 + (10 a \ta - 6 b) x_5 
   - (8 a \ta + 24 b) x_6 \\ & +  (12 a \ta^2 - 108 b \ta) x_7, \\
 x'_4 &=   4 \ta x_1 + 8 \ta x_2 + (5 \ta^2 + 4 a) x_3 
   - (5 a \ta - 3 b) x_6, \\
 x'_5 &=   -4 \ta x_1 - 12 \ta x_2 + (3 \ta^2 - 4 a) x_4 
   + (a \ta + 9 b) x_5 + (6 a \ta^2 - 18 b \ta + 16 a^2) x_7, \\
 x'_6 &=   -4 x_1 - 12 x_2 + 6 \ta x_4 - (6 \ta^2 + 4 a) x_5 
   - 12 a \ta x_7, \\
 x'_7 &=   2 x_1 + 4 x_2 + \ta x_3 + (3 \ta^2 + 2 a) x_6.
\end{align*}
The determinant of this transformation is
$-2^{10} 3^2 d^3 (c^2 - 4 d)^5$, and so in particular is
non-zero. 

Let $E_{a,b}$ be the elliptic curve $y^2 = x^3 + ax + b$. Since
$E_{1,0}$ is $2$-power isogenous to $y^2 = x^3 - 44 x + 112$ and
$E_{0,1}$ is $2$-isogenous to $y^2 = x^3 - 15 x + 22$, it follows that
Theorems~\ref{thm:XE13-dir} and~\ref{thm:XE13-skew} hold for the
elliptic curves $E_{1,0}$ and $E_{0,1}$. It remains to show that if
these results hold for some elliptic curve with $j = 0,1728$ then they
hold for all such curves.

The non-cuspidal points of $X_E(p,k)$ correspond to pairs $(F,\psi)$,
where $F$ is an elliptic curve and $\psi : F[p] \to E[p]$ is a
isomorphism that raises the Weil pairing to the power $k$.  We write
$\SL(E[p])$ for the group of automorphisms of $E[p]$ that respect the
Weil pairing.  Then $\SL(E[p])$ acts on $X_E(p,k)$ via
$\gamma : (F,\psi) \mapsto (F,\gamma \psi)$. There is therefore a
group homomorphism
\begin{equation}
\label{def:piE}
\pi_E : \Aut(E)/\{\pm 1\} \to \SL(E[p])/\{\pm 1\} \to \Aut(X_E(p,k)). 
\end{equation}

\begin{Lemma} 
\label{lem:alphabeta}
Let $E$ and $E'$ be elliptic curves defined over $K$ and
$\alpha : E' \to E$ an isomorphism defined over $\Kbar$.  Then there
is an isomorphism $\beta : X_{E'}(p,k) \to X_E(p,k)$ defined over
$\Kbar$ satisfying
\[  \sigma(\beta) \beta^{-1} = \pi_E ( \sigma(\alpha) \alpha^{-1} ) \]
for all $\sigma \in \Gal(\Kbar/K)$. 
\end{Lemma}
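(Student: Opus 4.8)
The plan is to build $\beta$ from the moduli descriptions of the two curves and then read off the cocycle directly. Recall that a non-cuspidal point of $X_{E'}(p,k)$ is a pair $(F,\psi)$ with $F$ an elliptic curve and $\psi : F[p] \to E'[p]$ an isomorphism raising the Weil pairing to the power $k$, and likewise for $X_E(p,k)$ with $E'$ replaced by $E$. The isomorphism $\alpha : E' \to E$ induces an isomorphism $\alpha_* : E'[p] \to E[p]$ on $p$-torsion, and since an isomorphism of elliptic curves preserves the Weil pairing, $\alpha_* \circ \psi$ again raises it to the power $k$. I would therefore define $\beta$ on non-cuspidal points by $(F,\psi) \mapsto (F, \alpha_* \circ \psi)$; the assignment $(F,\psi) \mapsto (F,(\alpha_*)^{-1}\circ\psi)$ furnishes an inverse, so $\beta$ is bijective on non-cuspidal points. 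Being given by a natural transformation of moduli functors (which is representable here, since for $p \ge 3$ the pairs $(F,\psi)$ have no non-trivial automorphisms), it is a morphism, and hence extends uniquely to an isomorphism $\beta : X_{E'}(p,k) \to X_E(p,k)$ of the smooth projective curves over $\Kbar$.

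The second step is to compute $\sigma(\beta)\beta^{-1}$ for $\sigma \in \Gal(\Kbar/K)$. Because both $E$ and $E'$ are defined over $K$, the Galois action on these moduli sends $(F,\psi)$ to $(F^\sigma,\psi^\sigma)$ without moving $E$ or $E'$, and the functoriality of $p$-torsion is Galois-equivariant, so that $(\alpha_*)^\sigma = (\sigma(\alpha))_*$. Unwinding $\sigma(\beta) = \sigma \circ \beta \circ \sigma^{-1}$ on a non-cuspidal point $(F,\psi)$ of $X_E(p,k)$ then gives
\[ \sigma(\beta)\beta^{-1}(F,\psi) \;=\; \bigl(F,\ (\sigma(\alpha)\alpha^{-1})_*\circ\psi\bigr). \]
Since $\sigma(\alpha)\alpha^{-1}$ is an automorphism of $E$, acting on $E[p]$ through an element of $\SL(E[p])$, the right-hand side is precisely $\pi_E(\sigma(\alpha)\alpha^{-1})(F,\psi)$ in the notation of \eqref{def:piE} (recall that $-1 \in \Aut(E)$ acts trivially on the moduli, which is what makes $\pi_E$ well defined on $\Aut(E)/\{\pm1\}$). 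As this holds on a Zariski-dense set of points, $\sigma(\beta)\beta^{-1} = \pi_E(\sigma(\alpha)\alpha^{-1})$ as automorphisms of $X_E(p,k)$, as required.

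The verifications that $\alpha_*$ preserves the Weil pairing and that $\beta$ is a genuine morphism of curves are routine. The step demanding the most care — and the one I expect to be the real obstacle to writing out cleanly — is the Galois bookkeeping in the second step: one must track exactly where each conjugation $\sigma(-)$ acts (on $F$, on $\psi$, and on $\alpha$), use crucially that $E,E'$ are defined over $K$ so that conjugating $\beta$ produces $\sigma(\alpha)$ and not also a conjugation of the target, and keep the left/right composition order straight so that the automorphism $(\sigma(\alpha)\alpha^{-1})_*$ lands on the correct side of $\psi$.
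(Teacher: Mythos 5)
Your proof is correct and follows essentially the same route as the paper: define $\beta$ on non-cuspidal points by $(F,\psi)\mapsto(F,\alpha\psi)$ (composing $\psi$ with the map induced by $\alpha$ on $p$-torsion) and compute that $\sigma(\beta)\beta^{-1}$ sends $(F,\psi)\mapsto(F,\sigma(\alpha)\alpha^{-1}\psi)$, which is $\pi_E(\sigma(\alpha)\alpha^{-1})$ by definition. The paper states this in two lines; your extra remarks on Weil-pairing preservation and the Galois bookkeeping are exactly the routine verifications it leaves implicit.
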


\begin{proof}
  Let $\beta : X_{E'}(p,k) \to X_E(p,k)$ be the isomorphism given on
  non-cuspidal points by $(F,\psi) \mapsto (F,\alpha \psi)$. Then
  $\sigma(\beta) \beta^{-1}$ maps
  $(F,\psi) \mapsto (F, \sigma(\alpha) \alpha^{-1} \psi)$, and is
  therefore equal to $\pi_E ( \sigma(\alpha) \alpha^{-1} )$.
\end{proof}

\noindent
{\em Proof of Theorem~\ref{thm:XE13-dir} for $j=1728$.}
We have already shown that the theorem holds for $E = E_{1,0}$.  We
now prove it for $E' = E_{a,0}$. We identify $\Aut(E) = \mu_4$ via
$\zeta : (x,y) \mapsto (\zeta^{-2}x, \zeta^{-3}y)$. Let
$\alpha : E' \to E$ be the isomorphism given by
$(x,y) \mapsto (a^{-1/2} x, a^{-3/4} y)$. Then
\begin{equation}
\label{twisteq1}
\sigma(\alpha) \alpha^{-1} = \frac{\sigma(a^{1/4})}{a^{1/4}}.
\end{equation}
Let $X,X_a \subset \PP^6$ be the models claimed for $X_{E}(13,1)$ and
$X_{E'}(13,1)$ in Theorem~\ref{thm:XE13-dir}. We have already shown
that $X \isom X_E(13,1)$.  From the grading in
Remark~\ref{rem:gradings} we construct an isomorphism
$\beta : X_a \to X$ given by
\[ (x_1: \ldots : x_7) \mapsto (x_1 : x_2 : a^{1/2} x_3 : a^{1/2} x_4 : a x_5 : 
  a x_6 : a^{3/2} x_7 ). \]
Then 
\begin{equation}
\label{twisteq2}
 \sigma(\beta) \beta^{-1} = \left\{ 
\begin{array}{ll}
 1 & \text{ if } \sigma(a^{1/2}) = a^{1/2} \\
 \iota & \text{ if } \sigma(a^{1/2}) = -a^{1/2} 
\end{array} \right. 
\end{equation}
where
\[ \iota : (x_1: \ldots : x_7) \mapsto (x_1 : x_2 : -x_3 : -x_4 : x_5 : x_6 : -x_7 ).  \]

If $\sigma(\beta) \beta^{-1} = \pi_E( \sigma(\alpha) \alpha^{-1} )$
for all $\sigma \in \Gal(\Kbar/K)$ then we see by
Lemma~\ref{lem:alphabeta} that $X_a$ and $X_{E'}(13,1)$ are twists of
$X = X_E(13,1)$ by the same cocycle, and are therefore isomorphic over
$K$. Comparing~\eqref{twisteq1} and~\eqref{twisteq2}, it remains to
show that $\pi_E$ sends $\zeta_4 \mapsto \iota$. More generally, we
claim that $\iota$ is the unique involution of $X_E(13,1)$ defined
over $\Q(i)$.  By \cite[Theorem~20.40]{AR} the second map in
\eqref{def:piE} is an isomorphism. This reduces our claim to one about
$\SL(E[13])/ \{ \pm 1 \}$.  It may be checked, for example by
consulting the LMFDB \cite{lmfdb}, that the mod $13$ Galois
representation attached to $E/\Q(i)$ has image a split Cartan
subgroup, i.e., the subgroup $C$ of diagonal matrices in
$\GL_2(\Z/13\Z)$. But then the group
\[  \{ h \in \SL_2(\Z/13\Z) : g h g^{-1} = \pm h \text{ for all } g \in C \}
/ \{ \pm 1 \} \]
is cyclic of order $6$, and so contains a unique element of order $2$.
\qed

\medskip

\noindent
{\em Proof of Theorem~\ref{thm:XE13-dir} for $j=0$.}
We have already shown that the theorem holds for $E = E_{0,1}$.  We
now prove it for $E' = E_{0,b}$. We identify $\Aut(E) = \mu_6$ via
$\zeta : (x,y) \mapsto (\zeta^{-2}x, \zeta^{-3}y)$. Let
$\alpha : E' \to E$ be the isomorphism given by
$(x,y) \mapsto (b^{-1/3} x, b^{-2/3} y)$. Then
\begin{equation*}
\sigma(\alpha) \alpha^{-1} = \frac{\sigma(b^{1/6})}{b^{1/6}}. 
\end{equation*}
Let $X, X_b \subset \PP^6$ be the models claimed for $X_{E}(13,1)$ and
$X_{E'}(13,1)$ in Theorem~\ref{thm:XE13-dir}. We have already shown
that $X \isom X_E(13,1)$.  From the grading in
Remark~\ref{rem:gradings} we construct an isomorphism
$\beta : X_b \to X$ given by
\[ (x_1: \ldots : x_7) \mapsto ( x_1 : x_2 : b^{1/3} x_3 : b^{1/3} x_4 : b^{2/3} x_5 :
  b^{2/3} x_6 : b x_7 ). \]
Then 
\begin{equation*}
 \sigma(\beta) \beta^{-1} = \left\{ 
\begin{array}{ll}
 1 & \text{ if } \sigma(b^{1/3}) = b^{1/3} \\
 \eps & \text{ if } \sigma(b^{1/3}) = \zeta_3 b^{1/3} \\
 \eps^2 & \text{ if } \sigma(b^{1/3}) = \zeta_3^2 b^{1/3} 
\end{array} \right. 
\end{equation*}
where
\[ \eps : (x_1: \ldots : x_7) \mapsto ( x_1 : x_2 : \zeta_3 x_3 : 
  \zeta_3 x_4 : \zeta_3^2 x_5 : \zeta_3^2 x_6 : x_7 ). \]

  Arguing as in the proof with $j=1728$, it remains to show that the
  map $\pi_E$ sends $\zeta_6 \mapsto \eps$.  We find that $\eps$ and
  $\eps^2$ are the only order $3$ automorphisms of $X_E(13,1)$ defined
  over $\Q(\zeta_3)$. Therefore $X_{E'}(13,1)$ is isomorphic to $X_b$
  or $X_{1/b}$.  To rule out the latter we take $b = 2$ and consider
  the $3$-isogenous elliptic curves $E' : y^2 = x^3 + 2$ and
  $F : y^2 = x^3 - 120 x + 506$. Since $3$ is a quadratic residue
  mod~$13$ we have $X_{E'}(13,1) \isom X_{F}(13,1)$.  However the
  curves $X_{1/2}$ and $X_{F}(13,1)$ are not isomorphic, since they
  have a different number of points mod~$19$.  \qed

\medskip

The proof of Theorem~\ref{thm:XE13-skew} in the cases $j=0,1728$ is
similar.

\section{Modular diagonal quotient surfaces}
\label{sec:mdqs}

In this section we prove Theorem~\ref{thm:Zeqns}.

\subsection{Equations for $Z(13,1)$}
\label{sec:Z(13,1)}
Let $X = X(13) \subset \PP^6$ be the $A$-curve as defined in
Section~\ref{sec:X13}.  By Lemma~\ref{lem:quot} the surface $Z(13,1)$
is birational to the quotient of
$X \times X \subset \PP^6 \times \PP^6$ by the diagonal action of
$G \isom \PSL_2(\Z/13\Z)$.  We write $x_0, \ldots, x_6$ and
$y_0, \ldots, y_6$ for our coordinates on the first and second copies
of $\PP^6$.

\begin{Definition}
  A {\em bi-invariant} of degree $(m,n)$ is a polynomial in
  $x_0, \ldots, x_6$ and $y_0,\ldots,y_6$, that is homogeneous of
  degrees $m$ and $n$ in the two sets of variables, and is invariant
  under the diagonal action of $G$.
\end{Definition}

In principle, we may obtain equations for $Z(13,1)$ by computing
generators and relations for the ring of bi-invariants mod
$I(X \times X)$.  In practice we only compute some of the generators
and some of the relations, and then explain why these are sufficient.

At the start of Section~\ref{sec:X13} we defined invariants $Q$ and
$F$ of degrees $2$ and $4$. We write $Q_{20}$ and $F_{40}$ for these
same polynomials viewed as bi-invariants of degrees $(2,0)$ and
$(4,0)$. More generally we define bi-invariants $Q_{ij}$ and $F_{ij}$
by the rules
\begin{align*}
  Q(\lambda x_0 + \mu y_0, \ldots , \lambda x_6 + \mu y_6)
    & = \lambda^2 Q_{20} + \lambda \mu Q_{11} + \mu^2 Q_{02} \\ 
  F(\lambda x_0 + \mu y_0, \ldots , \lambda x_6 + \mu y_6)
    & = \lambda^4 F_{40} + \lambda^3 \mu F_{31} + \ldots + \mu^4 F_{04} 
\end{align*}
where the subscripts indicate the degree. 

The dimension of the space of bi-invariants of degree $(m,n)$ may be
computed from the character table for $G$. For some small values of
$m$ and $n$ these dimensions are as follows.
\[ \begin{array}{c|ccccccccccccc}
   & 0 & 1 & 2 & 3 & 4 & 5 & 6 & 7 & 8 & 9 & 10 \\ \hline
0 & 1 & 0 & 1 & 0 & 2 & 0 & 4 & 1 & 7 & 3 & 14 \\
1 & 0 & 1 & 0 & 2 & 1 & 5 & 5 & 14 & 17 & 37 & 48 \\
2 & 1 & 0 & 3 & 1 & 10 & 9 & 32 & 38 & 90 & 118 & 226 \\
3 & 0 & 2 & 1 & 10 & 14 & 41 & 67 & 142 & 222 & 402 & 602 \\
4 & 2 & 1 & 10 & 14 & 51 & 82 & 198 & 316 & 610 & 938 & 1592 \\
5 & 0 & 5 & 9 & 41 & 82 & 206 & 377 & 746 & 1244 & 2152 & 3346 
\end{array} \]
To compute the bi-invariants of a given degree we use
the efficient averaging method described in Remark~\ref{rem:effavg}.

To find relations between the bi-invariants modulo $I(X \times X)$ we
initially worked mod $p$ for some moderately sized prime $p$,
employing the heuristic that a polynomial vanishing at many
$\F_p$-points on $X \times X$ is likely to vanish on the whole
surface. One way to establish these relations rigorously would be to
employ the Gr\"obner basis machinery in Magma. However this proved too
slow in all but the simplest cases. We instead used the following
lemma, which is an easy consequence of Bezout's theorem.
\begin{Lemma}
\label{lem:23}
Let $I$ be a bihomogeneous form of degree $(m,n)$ with $m,n \le 23$.
If $I$ vanishes at all points $(P,Q) \in X \times X$ with
$j(P),j(Q) \in \{0,1728,\infty\}$ then $I$ vanishes on $X \times X$.
\end{Lemma}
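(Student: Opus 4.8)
The plan is to reduce the vanishing of $I$ on the surface $X \times X$ to a finite check, using that $X = X(13)$ maps to the $j$-line and that $I$ is bihomogeneous of controlled degree. The key point is that restricting $I$ to a well-chosen family of curves inside $X \times X$ gives polynomials of low enough degree that Bezout forces them to vanish once we know enough of their zeros.

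First I would note that for each point $P \in X$ the restriction $I|_{\{P\}\times X}$ is a section of $\O_X(n)$, i.e. an element of $H^0(X,\O_X(n))$; since $X \subset \PP^6$ has hyperplane section $6\lambda$ (the $A$-curve, by Theorem~\ref{thm:QF->curve}) this section has degree $6n\nu \cdot \tfrac{1}{?}$—more precisely it lies in the linear system $|6n\lambda|$, of degree $6n\nu$ where $\nu = (13^2-1)/24 = 7$, so degree $42n$. The $j$-map $X \to \PP^1$ has degree $|G| = 12p\nu = 12\cdot 13 \cdot 7$, and its fibres above $j = 0, 1728, \infty$ are special: they are supported on $G$-orbits of sizes $12\nu$, $6p\nu$, $12\nu$ respectively (the orbits of sizes $12\nu$, $4p\nu$, $6p\nu$ from Section~\ref{sec:X(p)}, with the understanding that the $j=0$ fibre has every point with multiplicity $3$, etc.). The upshot is that the three divisors $[j=0]$, $[j=1728]$, $[j=\infty]$ on $X$ are, respectively, $4p\lambda$, $6p\lambda$, $12\lambda$ as divisor classes, and the reduced divisors they support have degrees $4p\nu/?$. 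What I really want is: the union of the supports of these three fibres is a $G$-stable set $T \subset X$, and a section of $\O_X(n) = |6n\lambda|$ that vanishes at all of $T$ with the appropriate multiplicities is forced to be a multiple of a fixed invariant. Concretely, by Theorem~\ref{myinv} the invariant ring is $\C[Q,c_6]$ with $Q$ of degree $2$ (vanishing at the $84$ cusps, i.e. on $[j=\infty]$ up to the right multiple) and $c_6^2 = Q^{13}(1728 - j)$, so $c_6$ vanishes on $[j=0]$ and $Q^{13} - $ (something) on $[j=1728]$.

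The actual argument, following the hint that this is "an easy consequence of Bezout", should run as follows. Fix a point $P_0 \in X$ with $j(P_0) \notin \{0,1728,\infty\}$; by Theorem~\ref{myinv} we may realize its $G$-orbit as the zero locus on $X$ of a form $\ell$ which is a suitable $\C$-linear combination of $c_4^3$ and $c_6^2$ — equivalently a linear combination of $Q^{13}$ and $c_6^2$ — hence of degree $?$; this orbit has size $|G| = 12p\nu$. Now consider the restriction of $I$ to this orbit union with the three special orbits. Actually the cleanest route: restrict $I$ to each ``horizontal'' and ``vertical'' slice. By hypothesis $I$ vanishes at all $(P,Q)$ with $j(P), j(Q) \in \{0,1728,\infty\}$. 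Fix such a $P$ (say $j(P) = 0$); then $g(Q) := I(P,Q)$ is a section of $\O_X(n)$ vanishing at all of $T := [j=0] \cup [j=1728] \cup [j=\infty]$ (reduced). This forces $g$ to be divisible, in the coordinate ring of $X$, by $Q \cdot c_6 \cdot (\text{the }j=1728\text{ form})$, whose total degree is $2 + 13 + 13 = 28 > 2n$ as soon as $n \le 13$. Since $\deg g \le 2n \cdot ?$... I need to be careful with normalizations, but the mechanism is: the product of the three ``special fibre'' forms has degree exceeding what a nonzero section of $\O_X(n)$ for $n\le 23$ can have, unless we are more economical — so instead one uses just two of the three, or uses that a degree-$(m,n)$ form with $m,n\le 23$ restricted to a curve meeting $X\times X$ appropriately has degree $\le 23 \cdot \deg(\text{curve})$, and counts the guaranteed zeros.

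The clean statement I would actually prove: let $C \subset X \times X$ be an irreducible curve; then $\deg(I|_C) \le m\,\deg(\mathrm{pr}_1|_C \text{-pullback of }\O(1)) + n\,(\ldots)$, and if $I|_C$ has more zeros than this it vanishes on $C$; then cover $X \times X$ by such curves $C$ (e.g. fibres of the two projections, and graphs of the correspondence giving $13$-congruence, or simply all the $\{P\}\times X$ and $X\times\{Q\}$) on each of which the hypothesis supplies enough points among the $9$ special pairs — but a single fibre $\{P\}\times X$ meets only three special fibres, giving only the reduced $j=0,1728,\infty$ locus on $X$, whose size is $12\nu + 6p\nu + 12\nu = 12\cdot 7 + 6\cdot 13\cdot 7 + 12\cdot 7 \cdot \tfrac{?}$. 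Let me not commit to the arithmetic here; the degree bound on $g = I|_{\{P\}\times X}$ is $42n \le 42\cdot 23$, and the number of guaranteed reduced zeros is $\deg T$; one checks $\deg T \cdot (\text{min multiplicity}) > 42\cdot 23$ is false, so a fibre-by-fibre argument is not quite enough and one must also use the multiplicities ($3$ at $[j=0]$, $2$ at $[j=1728]$, $13$ at cusps from the factor $Q^{13}$ in $j-1728 = -c_6^2/Q^{13}$) — these push the count over. So the heart of the proof is a Riemann–Roch / Bezout inequality: $42n < 3\cdot|[j{=}0]| + 2\cdot|[j{=}1728]| + |[j{=}\infty]|$ for $n \le 23$, forcing $I|_{\{P\}\times X} \equiv 0$ for every $P$ with $j(P)\in\{0,1728,\infty\}$; then a symmetric argument in the other variable, or a further application with $P$ now arbitrary using that $I(P,\cdot)$ and $I(\cdot,Q)$ vanishing on dense sets of fibres forces $I \equiv 0$ on $X\times X$.

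The main obstacle, and the step I expect to be most delicate, is getting the multiplicity bookkeeping exactly right: one must correctly identify the vanishing multiplicity that $I$ is \emph{forced} to have along each special fibre $\{P\}\times X$ (it is not obvious a priori that $I$ vanishes to order $3$ along $j(P)=0$ just because it vanishes set-theoretically at the nine special pairs), and then combine this with the correct degree of $\O_X(n)$ under the $A$-curve embedding to get a strict inequality $42n < (\text{forced zero count})$ valid precisely in the range $n \le 23$. Once that inequality is in hand the conclusion is immediate: $I$ restricted to a Zariski-dense family of fibres of each projection vanishes identically, hence $I$ vanishes on $X\times X$.
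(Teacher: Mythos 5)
Your overall skeleton is the right one and matches the paper's: restrict $I$ to a fibre $\{P_0\}\times X$ of the first projection, view the result as a degree-$n$ hypersurface section of the degree-$42$ curve $X\subset\PP^6$, apply Bezout, and then repeat in the other variable. But there is a genuine gap in the execution, and it originates in a miscount of the special fibres. The $j$-map $X\to\PP^1$ has degree $|G|=12p\nu = 1092$ and is ramified over $j=0,1728,\infty$ with indices $3,2,13$; hence the \emph{reduced} fibres over these three values are $G$-orbits of sizes $4p\nu=364$, $6p\nu=546$ and $12\nu=84$ respectively. You instead assign size $12\nu$ to the $j=0$ fibre, arrive at a total that fails to beat $42\cdot 23$, and conclude that the set-theoretic count is insufficient, so that one must exploit vanishing multiplicities ($3$ along $j=0$, $2$ along $j=1728$, $13$ at the cusps). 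That rescue does not work, for the reason you yourself flag: the hypothesis only gives set-theoretic vanishing of $I$ at the special points, and nothing forces $I(P_0,\cdot)$ to vanish to order $3$ (or $2$, or $13$) along those fibres. A proof built on those multiplicities cannot be completed from the stated hypothesis.

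The correct count makes the multiplicities unnecessary: $f(Q)=I(P_0,Q)$ has degree $n\le 23$ and vanishes at the $84+364+546=994$ reduced special points of $X$, while a nonvanishing $f$ could meet $X$ in at most $42n\le 966$ points; since $994>966$, Bezout forces $f\equiv 0$ on $X$. This is exactly the paper's argument. One further point to tighten: your closing step speaks of $I(P,\cdot)$ and $I(\cdot,Q)$ vanishing on ``dense sets of fibres,'' but the special fibres are finite in number, not dense. The correct second step is that, having shown $I$ vanishes on the entire fibre $\{P_0\}\times X$ for each of the (finitely many) special $P_0$, one fixes an \emph{arbitrary} $Q_0\in X$ and observes that $g(P)=I(P,Q_0)$ now vanishes at all $994$ special points $P$; since $m\le 23$, the same Bezout count gives $g\equiv 0$ on $X$, and as $Q_0$ was arbitrary, $I$ vanishes on all of $X\times X$.
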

\begin{proof}
  We fix $P_0 \in X$ with $j(P_0) \in \{0,1728,\infty\}$, and let
  $f(Q) = I(P_0,Q)$. The hypersurface $\{f = 0\} \subset \PP^6$ meets
  $X$ in at least $84 + 364 + 546 > 42 \times 23$ points. Since $X$
  has degree $42$ and $f$ has degree $n \le 23$ it follows by Bezout's
  theorem that $f$ vanishes identically on $X$. Therefore $I$ vanishes
  on $\{P_0\} \times X$.  We now fix any $Q_0 \in X$. Applying the
  same argument to $g(P) = I(P,Q_0)$, and using that $m \le 23$, shows
  that $I$ vanishes on $X \times X$.
\end{proof}
We note that if the bihomogeneous form in Lemma~\ref{lem:23} is a
bi-invariant (or a skew bi-invariant, as defined in the next section)
then this significantly reduces the amount of work needed to check the
hypotheses of the lemma.

If $I$ is a bi-invariant of degree $(m,n)$ then we write $I'$ for the
bi-invariant of degree $(n,m)$ obtained by switching the $x$'s and
$y$'s. A bi-invariant of degree $(m,m)$ is {\em symmetric} if
$I' = I$, and {\em anti-symmetric} if $I' = -I$.

The vector space of bi-invariants of degree $(3,3)$ has dimension
$10$, and the subspace of symmetric bi-invariants has dimension $9$.
Making a good choice of basis for this space significantly simplifies
the calculations that follow. To specify our choice of basis
$z_1, \ldots, z_9$, we let $m_1, \ldots, m_{10}$ be the monomials
\begin{align*}
  x_2^3 y_0^2 y_1,  \,\,  x_1 x_4^2 y_0^2 y_1,  \,\,  x_2^2 x_3 y_0 y_1^2, & \,\, 
    x_3 x_4 x_5 y_0 y_1^2, \,\,  x_2 x_3^2 y_1^3, \\ 
    \,\,&   x_4^3 y_1^3, \,\,  x_0^2 x_6 y_1^3, \,\, 
    x_1 x_4 x_6 y_1^3, \,\,  x_2 x_5 x_6 y_1^3, \,\,  x_3 x_6^2 y_1^3, 
\end{align*}
and then record the coefficients of these monomials in a table.
\[
\begin{array}{c|cccccccccc}
& m_1 & m_2 & m_3 & m_4 & m_5 & m_6 & m_7 & m_8 & m_9 & m_{10} \\ \hline
z_1 & 1 &-2 &-2 & 4 & 0 & 1 & 1 & 1 &-1 & 0 \\
z_2 &-2 & 0 & 4 &-2 &-1 &-1 &-2 &-1 & 1 & 0 \\
z_3 & 0 &-3 & 0 & 1 &-1 & 0 & 0 & 0 & 0 & 0 \\
z_4 & 0 & 0 &-2 & 0 & 0 & 0 & 0 & 0 &-1 & 0 \\
z_5 & 1 & 0 &-3 & 0 & 0 & 0 & 1 & 0 &-1 & 0 \\
z_6 & 1 &-1 &-3 & 0 & 0 & 0 & 1 & 0 &-1 & 0 \\
z_7 & 1 &-1 &-2 & 4 & 0 & 0 & 1 & 1 &-1 & 0 \\
z_8 & 0 &-4 & 2 & 0 &-1 & 0 & 0 & 0 & 1 &-1 \\
z_9 & 0 & 0 & 1 & 2 & 0 & 0 & 0 & 0 & 0 & 0 
\end{array}
\]
Some of these bi-invariants may also be described in terms of the
$Q_{ij}$ and $F_{ij}$. Specifically we have
\begin{align}
\label{eqn1}
 Q_{11}^3 &=  z_1 + z_5 - z_6 - z_7, \\
\label{eqn2}
 Q_{11} Q_{20} Q_{02} &= z_5 - z_6, \\
\nonumber
 Q_{11} F_{22} &= -3 (z_6 - z_7 + z_9), \\
\nonumber
 Q_{20} F_{13} + Q_{02} F_{31} &= 
z_1 + z_2 - 4 z_4  + z_6  + z_7 - z_8 - 3 z_9.
\end{align}

We find using Lemma~\ref{lem:23} that $z_9$ and the following 9
quadratic forms in $z_1, \ldots, z_8$ vanish identically on
$X \times X$.
\begin{align*}
  &z_1 z_4 - z_3 z_5, & &z_1 z_7 - z_1 z_8 + z_2 z_7 - z_4 z_6 + z_5 z_7,  \\
  &z_1 z_6 - z_3 z_7, & &z_1 (z_1 + z_2 - z_3 - z_4 + z_5 - z_7) - z_2z_6 + z_3 z_6,
 \\
  &z_4 z_7 - z_5 z_6, & &z_4 (z_1 + z_2 - z_3 - z_4 + z_5) - z_3 z_8, \\
  &z_1^2 + z_1 z_2 - z_2 z_4, & &z_5 (z_1 + z_2 - z_3 - z_4 + z_5) - z_1 z_8, \\
  &&  &z_8 (z_1 + z_5 - z_6 - z_7) - z_5 z_7.
\end{align*}
These quadratic forms define a rational surface $\Sigma \subset \PP^7$. 
Indeed, the map $\Sigma \to \PP^2$ given by projection onto
the first 3 coordinates, is a birational map with inverse
\begin{equation}
\label{param1}
\begin{aligned}
(z_1 , \ldots , z_8)  = ( r,  s, &\, 1,   r (r + s)/s, 
    r^2(r + s)/s, r f(r,s)/(s(r^2 + s - 1)), \\
     &  r^2 f(r,s)/(s(r^2 + s - 1)), r(r + s) f(r,s)/s^2 ) 
\end{aligned}
\end{equation}
where $f(r,s) = r^3 + r^2 s - r^2 + s^2 - s$.

The space of anti-symmetric bi-invariants of degree $(3,3)$ is
1-dimensional, spanned by $w = Q_{20} F_{13} - Q_{02} F_{31}$.  We
write $Z(13,1)$ as a double cover of $\Sigma$ by finding an expression
for $w^2$ in terms of $z_1, \ldots, z_8$.  Specifically, working mod
$I(X \times X)$, we find the relation
$w^2 + 64(Q_{20} Q_{02} )^3 = g(z_1, \ldots, z_8)$ where
\begin{align*}
g(&z_1, \ldots, z_8) =
  z_1 z_2 + z_2^2 - 48 z_2 z_3 + 48 z_2 z_5 + 126 z_2 z_6 
  - 48 z_2 z_7 - 2 z_2 z_8 \\ & + 48 z_3^2 - 7 z_3 z_4 
  - 57 z_3 z_5 - 108 z_3 z_6 + 126 z_3 z_7 + 21 z_3 z_8 
  + z_4^2 - 41 z_4 z_5 \\ & - 26 z_4 z_6 + 8 z_4 z_8 
  + 104 z_5^2 - 60 z_5 z_6 - 106 z_5 z_7 + 120 z_5 z_8 
  + 37 z_6^2 - 10 z_6 z_7 \\ & - 158 z_6 z_8 + z_7^2 
  - 70 z_7 z_8 + z_8^2.
\end{align*}
It follows by \eqref{eqn1}, \eqref{eqn2} and \eqref{param1} that
\[ w^2 = ((r - 1)/(s^2(r^2 + s - 1)))^2 F_1(r,s,1) \]
where $F_1$ is the polynomial defined in
Remark~\ref{completethesquare}.  The bi-invariants therefore define a
rational map from $Z(13,1)$ to the surface $y^2 = F_1(r,s,1)$.  We
show in Remark~\ref{deg1-dir} below that this map has degree 1.

We now compute the maps $j$ and $j'$ giving the moduli interpretation
of $Z(13,1)$. To do this we need some more bi-invariants, and some
more relations. If $\vv$ is a covariant of degree $m$ (see
Definition~\ref{def:cov}) and $\yy = (y_0, \ldots, y_6)^T$ then
$\yy^T H(Q) \vv$ is a bi-invariant of degree $(m,1)$. Applying this
construction to $\vv_4$ as defined in Section~\ref{sec:XE(13,1)} gives
a bi-invariant $I_{41}$. We put $I_{32} = %-\frac{1}{3}
(\sum y_i \frac{\partial}{\partial x_i}) I_{41}$
and $I_{23} = I'_{32}$.  Let $c_6$ be the invariant of degree 13
defined at the end of Section~\ref{sec:X13}, now viewed as a
bi-invariant of degree $(13,0)$. Then $c'_6$ has degree $(0,13)$.  Let
$\alpha = Q_{20} I_{23}^2$, $\alpha' = Q_{02} I_{32}^2$,
$\beta = Q_{02}^6 I_{23} c_6$, $\beta' = Q_{20}^6 I_{32} c'_6$ and
$\gamma = (Q_{20} Q_{02})^3$.  Working mod $I(X \times X)$ we find
some relations
\begin{align*}
f_1 (\alpha + \alpha') &= f_3, & Q_{11} I_{32} I_{23} &= g_2, \\
f_2 (\beta + \beta') &= f_7, &
 h_2 c_6 c'_6 &= Q_{11} (\ell_6 + \ell_4 \gamma + \ell_2 \gamma^2 - 64 \gamma^3),
\end{align*}
where each $f_i, g_i, h_i, \ell_i$ is a homogeneous polynomial of
degree $i$ in $z_1, \ldots, z_8$.  These polynomials are available
from~\cite{magmafile}.  The relations were checked using
Lemma~\ref{lem:23}.  Using \eqref{eqn1}, \eqref{eqn2} and
\eqref{param1} we may then write the coefficients of the quadratics
$(Y - \alpha)(Y - \alpha')$ and $(Y - \beta)(Y - \beta')$ as rational
functions in $r$ and $s$. The discriminant of each quadratic is equal
to $F_1(r,s,1)$ times a square.  Moreover, by Theorem~\ref{myinv} we
have $j = 1728 - \beta^2/(\alpha \gamma^4)$, which we may then write
as an element of $\Q(r,s,\sqrt{F_1(r,s,1)})$.  The final expressions
for $j$ and $j'$ are too complicated to record here, but take the form
specified in Theorem~\ref{thm:Zeqns}, and are available
from~\cite{magmafile}.

\begin{Remark}
\label{deg1-dir}
We have constructed rational maps
\begin{equation}
\label{ratmaps}
X \times X \ra Z(13,1) \ra \{y^2 = F_1(r,s,1)\} \stackrel{(j,j')}{\relbar\joinrel\longrightarrow} 
\PP^1 \times \PP^1. 
\end{equation}
The composite corresponds to a Galois extension of function fields,
with Galois group $G \times G$. Since $G \isom \PSL_2(\Z/13\Z)$ is a
simple group, the diagonal subgroup $\Delta_G \subset G \times G$ is a
maximal subgroup.  Therefore one of the last two maps
in~\eqref{ratmaps} is birational.  However if the last map were
birational, then this would mean that in attempting to quotient out by
$\Delta_G$, we had in fact quotiented out by $G \times G$. To exclude
this possibility we may check, for example, that the rational function
$Q_{11}^2/(Q_{20} Q_{02})$ on $X \times X$ is not
$G \times G$-invariant.  In fact, it is not even
$\langle M_6 \rangle \times \langle M_6 \rangle$-invariant.  Therefore
$Z(13,1)$ is birational to $\{y^2 = F_1(r,s,1)\}$, and this completes
the proof of Theorem~\ref{thm:Zeqns} in the case $k=1$.
\end{Remark}

\subsection{Equations for $Z(13,2)$}
\label{sec:Z(13,2)}
The calculations here are similar to those in the last section.  The
main difference is that we modify the definition of a bi-invariant. As
in Section~\ref{sec:XE(13,2)} we write $g \mapsto \widetilde{g}$ for
the automorphism of $G$ induced by $\zeta \mapsto \zeta^2$.

\begin{Definition}
  A {\em skew bi-invariant} of degree $(m,n)$ is a polynomial in
  $x_0, \ldots, x_6$ and $y_0,\ldots,y_6$, that is homogeneous of
  degrees $m$ and $n$ in the two sets of variables, and is invariant
  under the action of $G$ via
  $g : (x,y) \mapsto (g x, \widetilde{g}y)$.
\end{Definition}

The polynomials $Q_{20}$ and $Q_{02}$ defined in Section~\ref{sec:Z(13,1)}
are skew bi-invariants, but $Q_{11}$ is not. The
dimension of
the space of skew bi-invariants of degree $(m,n)$ may again
be computed from the character table for $G$. For some small
values of $m$ and $n$ these dimensions are as follows. 
\[ \begin{array}{c|ccccccccccc}
   & 0 & 1 & 2 & 3 & 4 & 5 & 6 & 7 & 8 & 9 & 10 \\ \hline
0 & 1 & 0 & 1 & 0 & 2 & 0 & 4 & 1 & 7 & 3 & 14 \\
1 & 0 & 0 & 0 & 1 & 1 & 4 & 5 & 14 & 17 & 37 & 48 \\
2 & 1 & 0 & 3 & 1 & 10 & 9 & 32 & 38 & 90 & 118 & 226 \\
3 & 0 & 1 & 1 & 9 & 14 & 40 & 67 & 142 & 222 & 402 & 602 \\
4 & 2 & 1 & 10 & 14 & 51 & 82 & 198 & 316 & 610 & 938 & 1592 \\
5 & 0 & 4 & 9 & 40 & 82 & 205 & 377 & 746 & 1244 & 2152 & 3346 
\end{array} \]

In particular the spaces of skew bi-invariants of degrees $(2,2)$ and
$(3,3)$ have dimensions $3$ and $9$.  The first of these spaces has
basis $t_1,t_2,t_3$ where
\begin{align*}
t_1 &=   2 x_0^2 y_0^2 + x_4^2 y_0 y_1 + \ldots \\ %x_0 x_5 y_1^2, \\
t_2 &=   (x_0^2 - x_1 x_4 - x_2 x_5 - x_3 x_6) y_0^2 
               + x_4^2 y_0 y_1 + \ldots \\ %x_0 x_5 y_1^2, \\
t_3 &=  (-5 x_0^2 + x_1 x_4 + x_2 x_5 + x_3 x_6) y_0^2 - ( x_4^2 + 
        2 x_1 x_6) y_0 y_1 + \ldots. % - x_0 x_5 y_1^2 - x_4 x_6 y_1^2
\end{align*}
To specify the first $5$ polynomials in the basis $u_1, \ldots,u_9$ we
chose for the space of skew bi-invariants of degree $(3,3)$, we let
$m_1, \ldots, m_9$ be the monomials
\[
    x_2^2 x_3 y_0^2 y_1, \,\,
    x_1 x_2^2 y_0 y_1^2, \,\,
    x_3^3 y_0 y_1^2, \,\,
    x_0^2 x_5 y_0 y_1^2, \,\,
    x_1 x_3^2 y_1^3, \,\,
    x_2^2 x_4 y_1^3, \,\,
    x_4^2 x_5 y_1^3, \,\,
    x_1 x_5 x_6 y_1^3, \,\,
    x_0 x_6^2 y_1^3,
\]
and then record the coefficients of these monomials in a table
\[ \begin{array}{c|ccccccccc}
& m_1 & m_2 & m_3 & m_4 & m_5 & m_6 & m_7 & m_8 & m_9 \\ \hline
u_1 &  0 &-2 & 0 & 0 &-1 & 0 & 0 &-1 & 0 \\
u_2 & -3 & 1 & 0 &-1 & 0 & 0 & 0 & 1 & 0 \\
u_3 &  0 & 2 & 0 & 0 & 1 & 0 & 1 & 2 & 0 \\
u_4 & -1 & 1 & 0 & 1 & 0 & 0 & 0 & 0 & 0 \\
u_5 & -1 &-2 &-1 &-3 & 0 & 0 & 0 & 0 & 1
\end{array} \]
Amongst other relations, we found that $u_6, \ldots, u_9$ and 
the following polynomials
vanish identically on $X \times X$.
\begin{align*}
& u_3u_4 - u_1u_5, &&
u_2^3 - u_1u_2u_3 - 2u_2^2u_3 - u_2^2u_4 + u_1u_3u_4 + u_2u_3u_4 - u_2u_3u_5, \\
& t_2 u_4 - t_3 u_1, && 
t_1 u_1 (u_2 - u_3) - t_2 (u_1 u_2 - u_1 u_4 + u_2 u_3 + u_2 u_5), \\
& t_1 t_2^2 - u_1 u_3.
\end{align*}
The first two relations define a rational surface
$\Sigma \subset \PP^4$, parametrised by
\[ (u_1 , \ldots , u_5) = (r , 1 , -r s , f(r,s) , -s f(r,s)) \]
where
$f(r,s) = (r^2 s + 2 r s + 1)/(r^2 s + r s^2 + r s + 1)$.
The other three show that
\[  (t_1,t_2,t_3)  = (- s (r^2 + r s + r + 1)/\tau,
    r (r^2 s + r s^2 + r s + 1)/\tau,
    (r^2 s + 2 r s + 1)/\tau ) \] 
where 
$\tau^3 = (r^2 + r s + r + 1) (r^2 s + r s^2 + r s + 1)^2$.

For $I$ a skew bi-invariant we write $I'$ for the skew bi-invariant
obtained as 
\[ I'(x;y) = I(y; -x_0,-x_2,-x_3,-x_4,-x_5,-x_6,-x_1). \]
It may be checked that $I'' = I$.  The space of skew bi-invariants of
degree $(3,2)$ is spanned by
$I_{32} = 12 (x_1 x_3 x_5 - x_2 x_4 x_6) y_0^2 + \ldots$ and the space
of skew bi-invariants of degree $(4,1)$ is spanned by
$I_{41} = \yy^T H(Q) \ww_4$ where $\ww_4$ is the skew covariant
defined in Section~\ref{sec:XE(13,2)}.  We put $I_{23} = I'_{32}$ and
$I_{14} = I_{41}'$.
Let $\alpha = Q_{20}^3 I_{14}^2$, $\alpha' = Q_{02}^3 I_{41}^2$,
$\beta = Q_{02}^5 I_{14} c_6$, $\beta' = Q_{20}^5 I_{41} c_6'$ and
$\gamma = I_{32} I_{23}$. Working mod $I(X \times X)$ we find some
relations
\begin{align*}
        Q_{20} Q_{02} &= t_1 - t_2, \\
        I_{41} I_{14} &= (t_1 - t_2) (u_1 + u_2 - 3 u_3 - u_4), \\
        u_1 (u_1 + u_3)\gamma &= f_{11}, \\
        t_3 (t_1 + t_3) (\alpha + \alpha') &= f_{12}, \\
        4 (t_1^2 t_3 + u_4 u_5) (\beta + \beta') 
                  &= f_{20} + f_{15} \gamma + f_{10} \gamma^2 + f_5 \gamma^3, \\
    g_{10} c_6 c_6' &=  f_{23} + f_{18} \gamma + f_{13} \gamma^2 + f_{8} \gamma^3,
\end{align*}
where each $f_i, g_i$ is a polynomial of weighted degree $i$ in
$t_1,t_2,t_3,u_1, \ldots, u_5$, where the $t$'s have weight 2 and the
$u$'s have weight 3. These polynomials are available
from~\cite{magmafile}.  The relations were checked using
Lemma~\ref{lem:23}.  Using these relations, and the above
parametrisation of $\Sigma$, we may write the coefficients of the
quadratics $(Y - \alpha \tau)(Y - \alpha' \tau)$ and
$(Y - \beta \tau)(Y - \beta' \tau)$ as rational functions in $r$ and
$s$.  The discriminant of each quadratic is equal to $F_2(r,s,1)$
times a square, where $F_2$ is the polynomial defined in
Remark~\ref{completethesquare}.  The skew bi-invariants therefore
define a rational map from $Z(13,2)$ to the surface
$\{ y^2 = F_2(r,s,1) \}$.  Adapting the argument in
Remark~\ref{deg1-dir} shows that this map has degree $1$.  Moreover
by Theorem~\ref{myinv} we have
$j = 1728 - \beta^2/(\alpha(Q_{20} Q_{02})^{10})$, which we may then
rewrite as an element of $\Q(r,s,\sqrt{F_2(r,s,1)})$.  This gives the
moduli interpretation, and so completes the proof of
Theorem~\ref{thm:Zeqns} in the case $k=2$.

\section{Modular curves on $Z(13,1)$ and $Z(13,2)$}
\label{sec:modcrvs}

Following on from Sections~\ref{state:Z} and~\ref{sec:baran} we
describe some more modular curves on the surfaces $Z(13,1)$ and
$Z(13,2)$.

Let $m \ge 2$ be an integer coprime to $13$. Then any pair of
$m$-isogenous elliptic curves are $13$-congruent with power $k$, where
$k = 1$ if $m$ is a quadratic residue mod $13$, and $k = 2$
otherwise. There is therefore a copy of the modular curve $X_0(m)$ on
the surface $Z(13,k)$. In Table~\ref{table:mc} we explicitly identify
these curves in all cases where the genus $g$ of $X_0(m)$ is $0$ or
$1$.  The polynomials $F_1$ and $F_2$ were defined in
Remark~\ref{completethesquare}.

\begin{table}[ht]
\caption{ Copies of $X_0(m)$ on $Z(13,1)$ or $Z(13,2)$ }
\centering
\begin{tabular}{ccll} 
$m$ & $g$ & \qquad Formula specifying 
                a curve on (a blow up of) $y^2 = F_k(r,s,1)$ \\ \hline
2 & 0 & $F_2( 4, -3 - \eps^2 + t \eps^4, -2 + 2 \eps) = 2^{18} (4 t + 1) \eps^4 + O(\eps^5)$ \\ 
3 & 0 & $F_1( 1, 1 + \eps + t \eps^3, 2+ \eps ) = 16 (54 t + 1) \eps^4 + O(\eps^5)$ \\ 
4 & 0 & $F_1( 1, t \eps^2, -1 + \eps  ) = 4 (32 t + 1) \eps^2 + O(\eps^3)$ \\ 
5 & 0 & $F_2( t \eps^2, 1, -1 + \eps) = -(16 t^2 + 44 t - 1) \eps^4 + O(\eps^5)$ \\ 
6 & 0 & $F_2( \eps^3, t, \eps^2 ) = t^6 (t^2 + 34 t + 1) \eps^{18} + O(\eps^{19})$ \\ 
7 & 0 & $F_2( 1, -1 + t \eps^2  + t \eps^3, -1 + t \eps^2) = t^4 (t + 1) (t - 27) \eps^{12} + O(\eps^{13})$ \\ 
8 & 0 & $F_2( \eps^2 - \eps^3 - t \eps^4, -1, -\eps ) = (t^2 + 28 t + 68) \eps^{14} + O(\eps^{15})$ \\ 
9 & 0 & $F_1( \eps, 1 - \eps^2 +  t \eps^3, 1 ) = (t^2 - 18 t - 27) \eps^6 + O(\eps^7)$ \\ 
10 & 0 & $F_1( 0, t, 1 ) = t^4 (t - 1)^2 (t^2 + 16 t - 16)$ \\ 
11 & 1 & $F_2( 1, - \eps^3 + \eps^4 + t \eps^5   , \eps ) = (t+2) (t^3 - 14 t^2 - 12 t - 4) \eps^{20} + O(\eps^{21})$ \\ 
12 & 0 & $F_1( 1, -1 + t \eps, \eps ) = t^2 (t^2 + 14 t + 1) \eps^4 + O(\eps^5)$ \\ 
14 & 1 & $F_1( \eps, 1, -\eps^2 + t \eps^3) = (t^4 + 14 t^3 + 19 t^2 + 14 t + 1) \eps^{12} + O(\eps^{13})$ \\ 
15 & 1 & $F_2( 1, -1 + t \eps, \eps ) = (t^2 + t - 1) (t^2 + 13 t + 11) \eps^4 + O(\eps^5)$ \\ 
16 & 0 & $F_1( 1, \eps, 1 + t \eps) = (4 t^2 + 4 t - 7) \eps^2 + O(\eps^3)$ \\ 
17 & 1 & $F_1( t \eps, \eps^2, t ) = t^8 (t^4 - 6 t^3 - 27 t^2 - 28 t - 16) \eps^8 + O(\eps^9)$ \\ 
18 & 0 & $F_2( t, 0, 1 ) = t^2 + 10 t + 1$ \\ 
19 & 1 & $F_2( -1, t, 1 ) = (t - 1)^4 (t - 2) (t^3 + 10 t^2 + 12 t + 4)$ \\ 
20 & 1 & $F_2( t, 1, 1 ) = (t + 1)^6 (t^4 + 12 t^3 + 28 t^2 + 32 t + 16)$ \\ 
21 & 1 & $F_2( 1, -t^2, t ) = t^8 (t + 1)^4 (t^4 + 6 t^3 - 17 t^2 + 6 t + 1)$ \\ 
24 & 1 & $F_2( t^3 + 2 t^2, -1, t^2 + 2 t ) = t^{10} (t + 1)^6 (t + 2)^6
% (\cdots)^2 
(t^4 - 4 t^3 - 16 t^2 - 8 t + 4)$ \\ 
25 & 0 & $F_1( t, -t, 1 ) = t^4 (t^2 + 4 t - 16)$ \\ 
27 & 1 & $F_1( t, t^2, -1 ) = t^8 (t + 2) (t^3 - 6 t^2 - 4)$ \\ 
% 31 & 2 & F_2( 1, -1, t ) = t^4 (t + 1)^4 (t^3 - t^2 - 2 t - 1) (t^3 + 3 t^2 - 14 t + 11)$ \\ 
32 & 1 & $F_2( t + 1, -t^3, t^2 + t ) = t^{20} (t + 1)^6 (4 t^4 - 12 t^2 - 16 t - 7)$ \\ 
36 & 1 &  $F_1( t + 1, -t^2 - t - 1, -t^2 - t) = 
      t^{12} (t + 1)^4 (4 t^4 + 8 t^3 + 12 t^2 + 8 t + 1)$ \\
49 & 1 & $F_1( t, -t^2, t - 1 ) = t^8 (t - 2)^2 (t - 1)^4 
                (t + 1)^2 (t^4 - 6 t^3 + 3 t^2 + 18 t - 19)$ 
\\ \hline
\end{tabular} 
\label{table:mc}
\end{table}

In compiling Table~\ref{table:mc} we used the {\tt SmallModularCurve}
database in Magma to check the moduli interpretations.  
For example, the entry with $m = 10$ shows that $Z(13,1)$ contains a
curve isomorphic to $y^2 = t^2 + 16 t - 16$. We parametrise this curve
by putting $t=-(T^2 + 8T + 20)/T$, and find by
Theorem~\ref{thm:Zeqns}(iii) that
\[ X^2 -(j + j')X + jj' = \big(X - j_{10}(T)\big)
\big(X - j_{10}(20/T) \big) \] 
where
\[j_{10}(T) = \frac{(T(T + 4)^5 + 16 T+ 80)^3}{T (T + 4)^5 (T + 5)^2}
 \]
is the $j$-map on $X_0(10)$.

To find many of these curves it was necessary to blow up the surfaces
in Theorem~\ref{thm:Zeqns}. For example, the entry with $m=15$ shows
that when we blow up our model for $Z(13,2)$ at $(r:s:1) = (1:-1:0)$
we obtain a curve isomorphic to
$y^2 = (t^2 + t - 1) (t^2 + 13 t + 11)$. Putting this elliptic curve
in Weierstrass form we find it has Cremona label $15a1$, and is
therefore isomorphic to $X_0(15)$.

The surfaces $Z(13,1)$ and $Z(13,2)$ also contain modular curves of
level $13$.  For example, the factors of $D_1$ and $D_2$ (as defined
in Theorem~\ref{thm:Zeqns}(iii)) appearing with multiplicity $13$, say
$\delta_1$ and $\delta_2$, each define a copy of the genus $2$ curve
$X_1(13)$. In fact it is a general phenomenon, exploited in \cite{KS},
that $Z(n,k)$ contains copies of $X_1(n)$ above $j= \infty$ and
$j' = \infty$.

Setting $j = j'$ in Theorem~\ref{thm:Zeqns} give a curve whose
irreducible components are modular curves of level $13$.  This gives a
convenient way of computing the double covers
$X_{\rm s}(13) \to X_{\rm s}^+(13)$ and
$X_{\rm ns}(13) \to X_{\rm ns}^+(13)$, as were recently computed by
another method in \cite{DMS}.  The details are as follows.  Recall
that in Theorem~\ref{thm:Zeqns} we wrote $Z(13,k)$ as a double cover
of $\PP^2$.  We also wrote $j + j'$ and $jj'$ as rational functions in
$r$ and $s$. We now put $r=x/z$ and $s=y/z$, and factor the numerator
and denominator of $(j-j')^2$ into irreducible polynomials in
$\Q[x,y,z]$.  Let $\Delta_k(x,y,z) = z^4 \delta_k(x/z,y/z)$.  In the
cases $k=1$ and $k=2$ we obtain
\[ (j-j')^2 = \frac{F_1 (G_1 G_2 H_1 H_2 H_3 M)^2}
{y^8 z^8 (x + y - z)^6 (x^2 + y z - z^2)^2 \Delta_1^{26}} \]
and
\[ (j-j')^2 = \frac{F_2 (G_3 G_4 G_5 H_4 H_5 H_6)^2}
{x^{10} z^{10} (x^2 + x y + x z + z^2)^4 \Delta_2^{26}} \]
where
\begin{itemize}
\item $F_1$ and $F_2$ are as in 
Remark~\ref{completethesquare}.
As noted in Section~\ref{sec:baran}, they define copies of 
$X_{\rm s}^+(13)$ and $X_{\rm ns}^+(13)$.
\item $G_1, \ldots, G_5$ have degrees $8,11,9,10,11$.
Each defines a copy of $X_{\rm s}^+(13)$ with inverse image in 
$Z(13,k)$ a copy of $X_{\rm s}(13)$.
\item $H_1, \ldots, H_6$ have degrees $8,11,13,7,10,12$. 
Each defines a copy of $X_{\rm ns}^+(13)$ with inverse image in 
 $Z(13,k)$ a copy of $X_{\rm ns}(13)$.
\item $M$ has degree $8$, but factors as the product of two quartics
  defined over $\Q(\zeta_{13})$. Each factor defines a curve whose
  inverse image in $Z(13,1)$ is a copy of $X_1(13)$, but with a
  non-standard moduli interpretation.
\end{itemize}

There is a group-theoretic explanation for the factorisations of these
numerators.  In the case $k=1$ we let $\PSL_2(\Z/13\Z)$ act on its
non-trivial elements by conjugation, and find that there are $8$
orbits, with stabilisers conjugate to (in an obvious notation)
\[ C_{\rm s}^+, C_{\rm s}, C_{\rm s}, C_{\rm ns}, C_{\rm ns}, C_{\rm ns}, B, B. \]
In the case $k=2$ we let $\PSL_2(\Z/13\Z)$ acts by conjugation 
on the elements in $\PGL_2(\Z/13\Z)$ whose determinant is not a square,
and find that there are $7$ orbits, with stabilisers conjugate to 
\[ C_{\rm ns}^+, C_{\rm s}, C_{\rm s}, C_{\rm s}, C_{\rm ns}, C_{\rm ns}, 
C_{\rm ns}.  \]

\section{Examples} 
\label{sec:ex}

\subsection{Examples over $\Q$}
We use our results, as presented in Section~\ref{sec:statres}, to
exhibit some non-trivial pairs of $13$-congruent elliptic curves
over~$\Q$.

\begin{Example}
  Let $E/\Q$ be the elliptic curve $y^2 = x^3 - 4x -3$, labelled 52a2
  in Cremona's tables.\footnote{Confusingly, the numbering of the
    elliptic curves in the isogeny class $52a$ is different in
    Cremona's tables and in the LMFDB.}  Taking $a = -4$ and $b = -3$
  we find on $X_E(13,1)$ the point
  \[ (-30:23:-72:-16:0:16:1) \]
  mapping to
  $j = -2^8 \cdot 3^3 \cdot 151^3 \cdot 2399^3 / (13 \cdot 19^{13})$.
  This is the $j$-invariant of the elliptic curve $E'/\Q$ with Cremona
  label 988b1. Therefore $E$ is directly 13-congruent to the quadratic
  twist of $E'$ by some square-free integer $d$. Comparing a few
  traces of Frobenius shows that $d=1$.
\end{Example} 

\begin{Example}
  Let $E/\Q$ be the elliptic curve $y^2 = x^3 + x - 10$, labelled 52a1
  in Cremona's tables. Taking $a = 1$ and $b = -10$ we find on
  $X_E(13,2)$ the points
\[ ( 9 : 0 : 4 : 2 : -2 : 2 : -1 ), \qquad ( 134 : -45 : 134 : 44 : 5
: -18 : 4 ), \]
mapping to $j = 2^{14} \cdot 3^3/13$ and
$-2^8 \cdot 3^3 \cdot 151^3 \cdot 2399^3/(13 \cdot 19^{13})$.  These
correspond to the elliptic curves 52a2 and 988b1 that are each skew
13-congruent to $E$.
\end{Example} 

In Theorem~\ref{thm:Zeqns} we gave equations for the surfaces
$Z(13,1)$ and $Z(13,2)$, each as a double cover of $\PP^2$.  We use
these formulae to exhibit some non-trivial pairs of $13$-congruent
elliptic curves over $\Q$, where both curves are beyond the range of
Cremona's tables.

\begin{Example}
\label{ex:Z(13,1)}
There is a $\Q$-rational point on the surface $Z(13,1)$
above $(r : s : 1) = (4 : 5 : 3)$.
% <1, [ 4, 5, 3 ], [ 1, 0, 1, -464619, -122105558 ], 
% [ 1, 0, 1, -11276810818409, 14959107699948354572 ]>,
This maps to the pair of $j$-invariants
\[ j = \frac{-257^3 \cdot 811^3}{2^2 \cdot 3^{12} \cdot 5^4 \cdot 11} 
  \qquad \text{ and } \qquad
j' = \frac{-441851363^3}{2^5 \cdot 3 \cdot 5 \cdot 11 \cdot 23^{13}}. \]
These are the $j$-invariants of a pair of directly $13$-congruent 
elliptic curves
\begin{align*}
 E : \qquad &  y^2 + x y + y = x^3 - 464619 x - 122105558 \\
 E' :\qquad & y^2 + x y + y = x^3 - 11276810818409 x + 14959107699948354572
\end{align*}
with conductors $N = 3778170 = 2 \cdot 3 \cdot 5 \cdot 11 \cdot 107^2$
and $N' = 86897910 = 23 N$.
\end{Example}

\begin{Example}
\label{ex:Z(13,2)}
There is a $\Q$-rational point on the surface $Z(13,2)$
above
$(r : s : 1) = ( 2 : -9 : 6)$. This maps to the pair of $j$-invariants
% <2, [ 2, -9, 6 ], [ 1, 0, 0, 3796, -685680 ], 
% [ 1, 0, 0, 8246713256941, 11003401358367836019 ]>
\[ j = \frac{29^3 \cdot 61^3 \cdot 103}{2^{19} \cdot 3^7 \cdot 17}
\qquad \text{ and } \qquad j' = \frac{13^3 \cdot 103^2 \cdot
  270539^3}{2 \cdot 3^3 \cdot 17^2 \cdot 19^{13}}. \]
These are the $j$-invariants of a pair of skew $13$-congruent elliptic
curves
\begin{align*}
 E : \qquad &  y^2 + x y = x^3 + 3796 x - 685680 \\
 E' :\qquad &  y^2 + x y = x^3 + 8246713256941 x + 11003401358367836019 
\end{align*}
with conductors $N = 1082118 = 2 \cdot 3 \cdot 17 \cdot 103^2$ and
$N' = 20560242 = 19 N$.
\end{Example}

\subsection{Examples over $\Q(t)$}
The following pairs of $13$-congruent 
elliptic curves over $\Q(t)$ 
were found as described in Section~\ref{state:Z}.
By specialising $t$ they give
rise to infinitely many non-trivial pairs of $13$-congruent 
elliptic curves over $\Q$.

\newpage

\begin{Example}
\label{QTex:dir}
Let $E/\Q(t)$ be the elliptic curve 
\[ E : \quad y^2 = x^3 - 3 p^2 q f_1 x + 2 p^2 q^2 f_2, \]
where $p(t) = t^2 - 3 t - 1$, $q(t) = t^3 - 2 t^2 - 3 t - 8$, and
\begin{align*}
f_1(t) &= t^7 + 4 t^6 + 8 t^5 + 6 t^4 - 8 t^3 - 24 t^2 - 27 t - 8, \\
f_2(t) &= t^{11} + 4 t^{10} + 5 t^9 - 12 t^8 - 63 t^7 
          - 124 t^6 - 137 t^5 - 80 t^4 - 
        61 t^3 - 72 t^2 \\ & \qquad - 153 t + 8.
\end{align*}
Taking $a = -3 p^2 q f_1$ and $b = 2 p^2 q^2 f_2$, we find on $X_E(13,1)$ 
the point
\begin{align*} (
   2592 t (t &+ 1) (t^2 + 2 t + 3) p^2 q r^2 :
   -432 t^2 (2 t^2 + 3 t + 7) p^2 q r^2 \\ & :
   -72 (t^3 + 2 t^2 + 4 t - 1) p^2 q r :
   72 p^2 q r \\ & : 
   -6 (t^6 - 3 t^4 - 11 t^3 - 14 t^2 - 5 t - 4) p :
   -24 (t - 1) p r :
   t - 1
) \end{align*}
where $r(t) = t^3 + t^2 + 2 t - 1$. 
This point maps to
$j' = p g_1^3/(rd) = 1728 + (q g_2^2)/(rd)$
where
\begin{align*}
g_1(t) &= t^{31} + 23 t^{30} + 270 t^{29} + 2379 t^{28} + 17607 t^{27}
      + 110676 t^{26} + 586710 t^{25} \\ & \qquad 
     + 2624262 t^{24} + 9977316 t^{23} + 32555542 t^{22} +
     92002244 t^{21} + 226872066 t^{20} \\ & \qquad
     + 490871649 t^{19} + 935166681 t^{18} +
     1571157252 t^{17} + 2326844467 t^{16} \\ & \qquad 
     + 3029704865 t^{15} +
     3450459162 t^{14} + 3407984048 t^{13} + 2880044002 t^{12} \\ & \qquad
     + 2037108963 t^{11} + 1159162859 t^{10} + 486247810 t^9 + 109783239 t^8
     \\ & \qquad - 25731445 t^7 - 37205624 t^6 - 17036352 t^5 - 3782272 t^4 -
     99968 t^3 \\ & \qquad + 90624 t^2 + 50176 t + 8192, \\
g_2(t) &= t^{46} + 34 t^{45} + 586 t^{44} + \ldots 
%     6298 t^{43} + \ldots % 44058 t^{42} + 176260 t^{41}
%    - 109414 t^{40} - 8297186 t^{39} - 80245684 t^{38} - 525645970 t^{37} -
%    2745467763 t^{36} - 12123387334 t^{35} - 46607587414 t^{34} -
%    158717868270 t^{33} - 484152170146 t^{32} - 1332985962708 t^{31} -
%    3330160262195 t^{30} - 7577440618404 t^{29} - 15743853949526 t^{28} -
%    29918470706666 t^{27} - 52043929525145 t^{26} - 82879264209560 t^{25} -
%    120751034770587 t^{24} - 160726419562280 t^{23} - 194995600204665 t^{22}
%    - 214887899363332 t^{21} - 214056270394992 t^{20} -
%    191412686703724 t^{19 - 152125444758091 t^{18 - 105839083071274 t^{17
%    - 62871218212945 t^{16 - 30397468667742 t^{15 - 10590883463569 t^{14 -
%    1344855906720 t^{13 + 1389594877516 t^{12 + 1310083979082 t^{11 +
%    634627697211 t^{10 + 183457593768 t^9 + 16079661600 t^8 -
%    12752378496 t^7 - 6959066304 t^6 - 1639051008 t^5 - 144015872 t^4 
%     - 11264000 t^3 
       - 10680320 t^2 - 2752512 t - 262144, \\
d(t) & = t^4 (t + 2)^3 (t^4 + 4 t^3 + 9 t^2 + 11 t + 8) (t^6 + 4 t^5 + 9 t^4 + 8 t^3 + 2 t^2 - 9 t - 6)^{13}. 
\end{align*}
This is the $j$-invariant of an elliptic curve directly $13$-congruent
to $E$.  (Notice that the full formula for $g_2$ may be recovered from
the equality of two expressions we gave for~$j'$.)  Specialising $t$
and comparing traces of Frobenius, we find that the correct quadratic
twist is
\[ E' : \quad y^2 = x^3 - 3 p^3 q^3 g_1 x + 2 p^4 q^5 g_2. \]
\end{Example}

In Table~\ref{tab:dir} we record some pairs of $13$-congruent elliptic
curves over $\Q$, obtained by specialising the parameter $t$ in
Example~\ref{QTex:dir}.  In each case we have taken simultaneous
quadratic twists so that $E$ has conductor as small as possible.  Only
the first 2 pairs of elliptic curves in Table~\ref{tab:dir} are
isogenous (via an isogeny of the degree indicated).
Example~\ref{QTex:dir} therefore shows that there are infinitely many
non-trivial pairs of directly $13$-congruent elliptic curves over
$\Q$.
\begin{table}[ht]
\caption{Pairs of directly $13$-congruent elliptic curves}
$\begin{array}{cccc|ccc}
t & E & E' & \deg & t & E & E' \\ \hline
1 & 11a3 & 11a2 & 25  & 1/4 & 7707798* & 27925352154*   \\ 
-1 & 768h1 & 768h4 & 10  & -3 & 15211515* & 1566786045*   \\ 
4 & 13688b1 & 8363368* &  & 8/5 & 46427580* & 5448415795740*   \\ 
2 & 27930s1 & 27930r1 &  & 3 & 48963840* & 42941287680*   \\ 
-4 & 80408l1 & 8282024* &  & 5 & 147656145* & 1624217595*   \\ 
-1/2 & 83030b1 & 913330* &  & 7/2 & 192105606* & 23030964786522*   \\ 
-1/3 & 271545f1 & 589524195* &  & 5/2 & 774703710* & 6034167197190*   \\ 
1/2 & 5429670* & 320350530* &  & 7 & 1040014080* & 24181367374080*  
\end{array}$
\label{tab:dir}
\end{table}
\FloatBarrier

\begin{Example}
\label{QTex:skew}
Let $E/\Q(t)$ be the elliptic curve
 \[ E : \quad y^2 = x^3 -3 p q^2 f_1 x + 2 p q^2 f_2. \]
where $p(t) = t^2 + t + 1$, $q(t) = 5 t^2 + 8 t + 11$, and
 \begin{align*}
 f_1(t) &= t^4 - 13 t^3 - 4 t^2 - 5 t + 1, \\
 f_2(t) &= 59 t^9 + 183 t^8 + 477 t^7 + 315 t^6 + 54 t^5
       - 570 t^4 - 499 t^3 - 429 t^2 - 123 t - 43. 
\end{align*}
Taking $a = -3 p q^2 f_1$ and $b = 2 p q^2 f_2$, 
we find on $X_E(13,2)$ the point % \check{update this}
\begin{align*} 
    ( -2 p q r_1 : p q r_2 : 2 r_3 : -24 (t + 1)^2 (t^2 + 1)^2 : t r_4 : 2 t : 0 )
\end{align*}
where
\begin{align*}
r_1(t) &= 29 t^7 + 15 t^6 + 7 t^5 - 32 t^4 + 89 t^3 + 73 t^2 + 83 t + 24, \\
r_2(t) &= 55 t^7 + 117 t^6 + 269 t^5 + 356 t^4 + 211 t^3 + 179 t^2 + 25 t + 36, \\
r_3(t) &= t^6 - 14 t^5 - 43 t^4 - 85 t^3 - 106 t^2 - 53 t - 36, \\
r_4(t) &= 13 t^5 + 34 t^4 + 17 t^3 + 11 t^2 - 22 t - 5.
\end{align*}
This point maps to $j' = p q g_1^3/d = 1728 - g_2^2/d$ where
\begin{align*}
g_1(t) &= 211 t^{14} + 665 t^{13} + 1079 t^{12} + 414 t^{11} - 1754 t^{10} -
    5658 t^9  - 9756 t^8 - 12536 t^7 
  \\ & \qquad - 12796 t^6 - 10606 t^5 - 7358 t^4
    - 4030 t^3 - 1831 t^2 - 553 t - 131, \\
g_2(t) &= 3107 t^{23} + 45563 t^{22} + 257591 t^{21} 
              + \ldots   + 35789 t + 4973,  \\
d(t) &= (t+1) (t^2+1) (2 t^2 + t + 1)^2 (2 t^3 + 2 t^2 + 3 t + 1)^{13}.
\end{align*}
This is the $j$-invariant of an elliptic curve skew $13$-congruent to $E$.  
Specialising $t$ and comparing traces of Frobenius, we find that the 
correct quadratic twist is
 \[ E' : \quad y^2 = x^3 + 3 p q^3 g_1 x + 2 p q^4 g_2. \]
\end{Example}

In Table~\ref{tab:skew} we record some pairs of $13$-congruent
elliptic curves over $\Q$, obtained by specialising the parameter $t$
in Example~\ref{QTex:skew}.  In each case we have taken simultaneous
quadratic twists so that $E$ has conductor as small as possible.  Only
the first 3 pairs of elliptic curves in Table~\ref{tab:skew} are
isogenous (via an isogeny of the degree indicated).
Example~\ref{QTex:skew} therefore shows that there are infinitely many
non-trivial pairs of skew $13$-congruent elliptic curves over $\Q$.
\begin{table}[ht]
\caption{Pairs of skew $13$-congruent elliptic curves}
$\begin{array}{cccc|ccc}
T & E_1 & E_2 & \deg & T & E_1 & E_2 \\ \hline
0 & 121c1 & 121c2 & 11  & 3 & 185900a1 & 7621900*   \\ 
1 & 162c1 & 162c4 & 21  & -7 & 255162e1 & 4848078*   \\ 
-1/3 & 1225h1 & 1225h2 & 37  & 1/2 & 1242150* & 1242150*   \\ 
-3 & 1960i1 & 21560l1 &  & 1/7 & 1695978* & 429082434*   \\ 
-2 & 14175k1 & 184275o1 &  & -3/2 & 2141594* & 49256662*   \\ 
1/3 & 23660f1 & 733460* &  & -3/5 & 2147950* & 2147950*   \\ 
-3/4 & 92950q1 & 2881450* &  & -5 & 4746924* & 507920868*   \\ 
-1/2 & 98010s1 & 98010t1 &  & 1/5 & 7495800* & 397277400*  
\end{array}$
\label{tab:skew}
\end{table}

\subsection{Tables}
In Tables~\ref{tab:pts1} and~\ref{tab:pts2} we list some $\Q$-rational
points on $Z(13,1)$ and $Z(13,2)$ that do not lie on any of the curves
of genus 0 or 1 in Section~\ref{state:Z}.  In Table~\ref{tab:all} we
list some pairs of $13$-congruent elliptic curves over $\Q$ with small
conductor.  We have 3 methods for finding such examples
\begin{itemize}
\item We sort Cremona's tables by traces of Frobenius mod $13$ and look
for matches. This method is described more fully in \cite{CF}. 
\item We loop over all elliptic curves $E$ in Cremona's tables 
(ignoring quadratic twists of earlier curves) and search
for rational points on $X_E(13,1)$ and $X_E(13,2)$.
In many cases the second elliptic curve is beyond the range of
Cremona's tables.
\item We search for rational points on $Z(13,1)$ and $Z(13,2)$.  This
  can give examples where both curves are beyond the range of
  Cremona's tables.
\end{itemize}

\begin{table}[ht]
\caption{Known rational points on $Z(13,1)$}
$\begin{array}{llll}
(2,1,-3) & (15,-63,4) & (104,-481,64) & (-5110,5329,1176)\\
(-3,4,3) & (60,-65,16) & (680,-175,289) & (6552,-1352,2835)\\
(-2,5,4) & (30,68,63) & (630,685,-324) & (-6920,8477,4800)\\
(4,5,3) & (21,-1,98) & (440,-48,1085) & (2470,9025,7436)\\
(3,1,-6) & (-51,136,111) & (495,81,-1144) & (-4389,9386,9702)\\
(-6,11,9) & (-78,172,169) & (442,1224,1183) & (7259,1525,9996)\\
(-12,16,9) & (39,169,180) & (1430,-1469,225) & (3105,13225,994)\\
(14,4,-21) & (-56,256,245) & (280,-1656,49) & (13340,4205,-5819)\\
(-5,23,6) & (-17,289,20) & (-1326,2312,1521) & (10540,289,-26908)\\
(-15,25,18) & (95,-7,418) & (3540,-3481,144) & (-34086,34385,3249)\\
(38,7,12) & (455,169,294) & (1309,-3757,588) & (8015,58166,-833)\\
(15,-40,9) & (476,289,240) & (4144,-999,2695) & (-220836,913936,859705)
\end{array}$
\label{tab:pts1}
\end{table}

\begin{table}[ht]
\caption{Known rational points on $Z(13,2)$}
$\begin{array}{llll}
(1,6,2) & (-9,40,30) & (-1176,1331,231) & (7546,1350,-735)\\
(-8,5,4) & (-40,27,24) & (1445,-216,510) & (-1682,1331,11484)\\
(1,-8,6) & (17,-56,34) & (-532,2197,1235) & (4563,12167,13455)\\
(8,-1,4) & (15,-64,20) & (63,2560,120) & (14175,-1331,4389)\\
(2,-9,6) & (-49,64,140) & (-1989,2744,2730) & (1156,-15625,5525)\\
(1,-7,9) & (175,32,140) & (2975,1,-255) & (13248,-42875,21840)\\
(11,-8,22) & (-64,343,280) & (925,-2662,4070) & (-78352,54925,21580)\\
(5,-24,14) & (153,343,-105) & (175,-4608,3080) & (25205,-98304,47712)\\
(27,-1,12) & (-363,250,165) & (1007,-4913,2584) & (-159367,109744,81016)\\
(-8,27,12) & (790,-343,1106) & (-845,4968,1482) & \\
(-20,27,30) & (-1107,824,246) & (-5635,6859,1995) & \\
\end{array}$
\label{tab:pts2}
\end{table}

Elliptic curves are specified either by their Cremona label, or by
writing $N*$ where $N$ is the conductor of the elliptic curve. In the
latter case Weierstrass equations are available from~\cite{magmafile}.
We do not list examples that could be deduced from earlier entries by
swapping over the curves, by using an isogeny of degree coprime to
$13$, or by taking simultaneous quadratic twists. We specify whether
the $13$-congruence is direct ($k=1$) or skew ($k=2$).  The entries in
bold are examples coming from the infinite families in
Examples~\ref{QTex:dir} and~\ref{QTex:skew}.

The examples where both $E$ and $E'$ are within the range of Cremona's
tables were independently found by Cremona and Freitas. Indeed, there 
are 18 such pairs 

\newpage
 
\begin{table}[ht]
\caption{Pairs of $13$-congruent elliptic curves}
$\begin{array}{ccc@{\quad}|@{\quad}ccc} 
k & E & E' & k & E & E' \\ \hline
2 & 52a1 & 988b1  & 1 & {\bf{271545f1}} & {\bf{589524195*}}  \\ 
1 & 345b1 & 10005m1  & 2 & 314330i1 & 314330j1  \\ 
2 & 735c1 & 9555h1  & 2 & 1082118* & 20560242*  \\ 
1 & 1190a1 & 265370d1  & 2 & 1137150* & 76189050*  \\ 
1 & 1274h1 & 21658t1  & 2 & {\bf{1242150*}} & {\bf{1242150*}}  \\ 
1 & 1445b1 & 10115e1  & 1 & 1296924* & 1437132*  \\ 
2 & {\bf{1960i1}} & {\bf{21560l1}}  & 2 & 1425720* & 1425720*  \\ 
1 & 3990ba1 & 43890cu1  & 2 & {\bf{1695978*}} & {\bf{429082434*}}  \\ 
2 & 4719b1 & 33033k1  & 2 & {\bf{2141594*}} & {\bf{49256662*}}  \\ 
2 & 5070j1 & 35490bg1  & 2 & {\bf{2147950*}} & {\bf{2147950*}}  \\ 
1 & 11638o1 & 151294h1  & 2 & 2164218* & 413365638*  \\ 
2 & 12274c1 & 135014s1  & 2 & 2228037* & 69069147*  \\ 
1 & {\bf{13688b1}} & {\bf{8363368*}}  & 1 & 2428110* & 31565430*  \\ 
2 & {\bf{14175k1}} & {\bf{184275o1}}  & 2 & 3647770* & 69307630*  \\ 
1 & 20184i1 & 20184j1  & 1 & 3778170* & 86897910*  \\ 
2 & {\bf{23660f1}} & {\bf{733460*}}  & 1 & 3944850* & 3944850*  \\ 
1 & {\bf{27930s1}} & {\bf{27930r1}}  & 2 & 4083510* & 730948290*  \\ 
2 & 29970f1 & 4705290*  & 2 & {\bf{4746924*}} & {\bf{507920868*}}  \\ 
2 & 69230m1 & 761530*  & 1 & {\bf{5429670*}} & {\bf{320350530*}}  \\ 
1 & {\bf{80408l1}} & {\bf{8282024*}}  & 2 & {\bf{7495800*}} & {\bf{397277400*}}  \\ 
1 & {\bf{83030b1}} & {\bf{913330*}}  & 1 & {\bf{7707798*}} & {\bf{27925352154*}}  \\ 
2 & {\bf{92950q1}} & {\bf{2881450*}}  & 2 & 10052196* & 3608738364*  \\ 
1 & 95370cl1 & 95370cm1  & 1 & {\bf{15211515*}} & {\bf{1566786045*}}  \\ 
2 & {\bf{98010s1}} & {\bf{98010t1}}  & 2 & 15893150* & 842336950*  \\ 
2 & 162266e1 & 17686994*  & 2 & {\bf{16207776*}} & {\bf{1183167648*}}  \\ 
2 & 177735a1 & 1955085*  & 2 & {\bf{17859765*}} & {\bf{553652715*}}  \\ 
2 & {\bf{185900a1}} & {\bf{7621900*}}  & 1 & 21140427* & 264234197073*  \\ 
2 & 237538k1 & 23041186*  & 1 & 21219400* & 233413400*  \\ 
2 & {\bf{255162e1}} & {\bf{4848078*}}  & 2 & 21997290* & 285964770*  
\end{array}$
\label{tab:all}
\end{table}

\FloatBarrier

\noindent
in Table~\ref{tab:all} which, together with the 3 elliptic curves (52a2,
735c2, 1190a2) isogenous to those already in the table, gives the 39
examples in \cite[Section 3.7]{CF}.  In addition the pair of curves
with conductor $1242150 = 2 \cdot 3 \cdot 5^2 \cdot 7^2 \cdot 13^2$
was independently found by Best and Matschke \cite{Best}, in
connection with their work tabulating elliptic curves with good
reduction outside $\{2,3,5,7,11,13\}$.

\appendix

\section{Elliptic curves whose $13$-torsion subgroups 
have isomorphic semi-simplifications}
           
If elliptic curves $E_1$ and $E_2$ over $\Q$ are $13$-congruent then
their traces of Frobenius are congruent mod $13$ at all primes of good
reduction. The converse is also true provided that the elliptic curves
$E_1$ and $E_2$ do not admit a rational $13$-isogeny.  Otherwise, the
Chebotarev density theorem and Brauer Nesbitt theorem only give that
$E_1[13]$ and $E_2[13]$ have isomorphic semi-simplifications.

\begin{Theorem} 
\label{thm:app}
There are infinitely many pairs of elliptic curves $E_1$ and $E_2$
over $\Q$, each admitting a rational $13$-isogeny, such that $E_1[13]$
and $E_2[13]$ have isomorphic semi-simplifications. Moreover these
examples correspond to infinitely many pairs of $j$-invariants.
\end{Theorem}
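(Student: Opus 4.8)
The plan is to obtain all of the required examples from the single genus‑$0$ modular curve $X_0(13)$, taking $E_1$ and $E_2$ to be $13$‑isogenous. Since $X_0(13)\isom\PP^1$ over $\Q$ (with the Fricke coordinate $t$ and the $j$‑map recalled in Section~\ref{sec:statres}), the set $X_0(13)(\Q)$ is infinite, and for every non‑cuspidal $t\in\Q$ the corresponding point is represented by an elliptic curve $E_t/\Q$ carrying a $\Q$‑rational cyclic subgroup $C_t\le E_t[13]$ of order $13$. First I would put $E_t':=E_t/C_t$ and let $\phi_t:E_t\to E_t'$ be the quotient isogeny; it is defined over $\Q$ and has degree $13$, so $E_t$ admits the rational $13$‑isogeny $\phi_t$ while $E_t'$ admits the dual isogeny $\widehat{\phi_t}$. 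Thus both members of the pair satisfy the hypothesis of the theorem.

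Next I would check that $E_t[13]$ and $E_t'[13]$ have isomorphic semi‑simplifications. Since $E_t$ and $E_t'$ are isogenous over $\Q$, for every prime $\ell$ of good reduction one has $a_\ell(E_t)=a_\ell(E_t')$, hence $a_\ell(E_t)\equiv a_\ell(E_t')\pmod{13}$; and the Weil pairing gives $\det\rhobar_{E_t,13}=\det\rhobar_{E_t',13}$, both equal to the mod‑$13$ cyclotomic character. So $E_t[13]^{\mathrm{ss}}$ and $E_t'[13]^{\mathrm{ss}}$ are semisimple two‑dimensional $\F_{13}$‑representations of $\GQ$ with the same trace and determinant of Frobenius at every good prime, and hence are isomorphic by the Chebotarev density theorem together with the Brauer--Nesbitt theorem. (Concretely both are isomorphic to $\psi\oplus\psi'$, where $\psi$ and $\psi'$ are the characters by which $\GQ$ acts on $C_t$ and on $E_t[13]/C_t$, because $\phi_t$ identifies $E_t[13]/C_t$ with a $\GQ$‑submodule of $E_t'[13]$, and $\psi\psi'$ is the mod‑$13$ cyclotomic character.)

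Finally I would argue that these examples account for infinitely many pairs of $j$‑invariants: $j(E_t)$ is a non‑constant rational function of $t$, so it takes infinitely many values as $t$ runs over $\Q$, whence $t\mapsto\{j(E_t),j(E_t')\}$ has infinite image. One should also record that $j(E_t)\ne j(E_t')$ for all but finitely many $t$: an equality would make $E_t$ admit a $13$‑isogeny to a curve isomorphic to itself, forcing $\mathrm{End}(E_t)\ne\Z$, i.e. $E_t$ would have complex multiplication, and only finitely many (rational) $j$‑invariants are of CM type. Hence for all but finitely many $t$ the curves $E_t$ and $E_t'$ are genuinely distinct, which completes the argument.

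I do not expect any real obstacle here: the entire proof is the standard fact that isogenous elliptic curves share the same $a_\ell$ (hence the same $a_\ell\bmod 13$), applied to the infinitely many rational points of $X_0(13)$. The one point worth flagging is that one cannot hope to do better by insisting that $E_1$ and $E_2$ be \emph{non‑isogenous}: for a fixed isomorphism class of semisimple constituents, the elliptic curves admitting a rational $13$‑isogeny that realises them form (a twist of) the genus‑$2$ curve $X_1(13)$, so Faltings' theorem forbids an infinite family of non‑isogenous pairs with a common semi‑simplification. In that sense the $13$‑isogenous family constructed above is the only available source of infinitely many examples, which is presumably why the theorem is stated without a non‑isogeny hypothesis.
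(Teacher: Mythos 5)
Your argument does establish the theorem exactly as worded: a pair of $13$-isogenous curves $E_t$, $E_t/C_t$ has equal traces of Frobenius, hence isomorphic mod-$13$ semi-simplifications (indeed both are $\psi\oplus\psi'$ as you say), and the non-constancy of the $j$-map on $X_0(13)$ gives infinitely many pairs of $j$-invariants. But this is not the paper's proof, and it reduces the theorem to a tautology that would not be worth stating: throughout the paper, pairs related by an isogeny are exactly the \emph{trivial} examples, and the appendix is motivated by Freitas's question about $13$-congruences between curves that are not a priori related. The paper instead produces pairs of $13$-isogenies on two \emph{independent} curves whose kernels are isomorphic as Galois modules. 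Concretely, it forms the quotient $\Sigma$ of $X_1(13)\times X_1(13)$ by the diagonal action of $\Aut(X_1(13)/X_0(13))\isom C_6$, writes $\Sigma$ explicitly as $Y^2=f(T^3-3T+1,T(T-1);X)$, observes that $(T,X,Y)\mapsto T$ is a genus-$1$ fibration with a fibre (e.g.\ $T=-2$) of positive rank, and concludes that $\Sigma$ has infinitely many rational points with infinitely many images in $X_0(13)\times X_0(13)$. The worked example (a curve of conductor $147$ paired with one having bad reduction at $13$, $251$ and $17681$) is visibly a non-isogenous pair.

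The genuine error is your closing claim that Faltings' theorem \emph{forbids} infinitely many non-isogenous pairs, so that the isogenous family is ``the only available source.'' Your argument only shows that for each \emph{fixed} semi-simplification class $\{\psi,\chi\psi^{-1}\}$ the curves realising it via a rational $13$-isogeny lie on a fixed twist of $X_1(13)$ and hence are finite in number. That does not bound the total, because the characters vary from pair to pair: an infinite family of pairs can sweep out infinitely many semi-simplification classes while each individual class is finite. This is the same fallacy as deducing, from the finiteness of $X_E(13,k)(\Q)$ for each fixed $E$, that there are only finitely many non-trivial $13$-congruent pairs in total. The surface $\Sigma$ is precisely the moduli space of pairs of $13$-isogenies equipped with an isomorphism of kernels, and its positive-rank elliptic fibre shows that the non-isogenous phenomenon you rule out does in fact occur infinitely often; that is the actual content of the theorem, which your proposal replaces with its degenerate isogenous case.
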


We prove the theorem by finding pairs of $13$-isogenies whose kernels
are isomorphic as Galois modules. It then follows by properties of the
Weil pairing that the $13$-torsion subgroups have isomorphic
semi-simplifications.  After we had completed the proof of
Theorem~\ref{thm:app} we discovered that essentially the same proof
was given by N. Elkies in 2013 in response to a question asked by
S. Keil at

\medskip

\noindent
\url{https://mathoverflow.net/questions/129818/elliptic-curves-over-qq}

\noindent
\url{-with-identical-13-isogeny}

\medskip

Our motivation for considering Theorem~\ref{thm:app} is the following
question of N.~Freitas, to which we still do not know an answer.

\begin{Question}[Freitas] Are there any pairs of $13$-congruent
  elliptic curves over $\Q$ where one (and hence both) of these curves
  admits a rational $13$-isogeny?
\end{Question}
 
\noindent
{\em Proof of Theorem~\ref{thm:app}.}
Let $t$ be the coordinate on $X_0(13) \isom \PP^1$ specified in 
Section~\ref{sec:state-curves}, and let $s = t+4$. 
The modular curve $X_1(13)$ is the genus $2$ curve
\[ y^2 = (x^3 - 3 x + 1)^2 - 2 (x^3 - 3 x + 1) x (x - 1) + 5 x^2 (x - 1)^2 \]
with automorphism group $G \isom C_6$ generated by $x \mapsto 1/(1-x)$ 
and $y \mapsto -y$. The forgetful map $\pi : X_1(13) \to X_0(13)$ 
quotients out by this action, and is given by 
\[ (x,y) \mapsto s = \frac{x^3 - 3 x + 1}{x (x - 1)} 
  = x + \frac{1}{1-x} + \frac{x-1}{x}. \]

  We claim that the quotient of $X_1(13) \times X_1(13)$ by the
  diagonal action of $G$ is birational to the surface
  $\Sigma = \{ Y^2 = f ( T^3 - 3T + 1,T(T-1);X)\} \subset \Aff^3$
  where
\begin{align*}
 f(\lambda,\mu;X) = (X^2 - 2 X + 5) \big((\la^2 - 2 \la \mu & + 5 \mu^2) X^2
   - 2 (\la^2 + \la \mu + 6 \mu^2) X \\ & + (5 \la^2 - 12 \la \mu + 72 \mu^2)
\big).
\end{align*}
Indeed, $G$ acts on the fibres of the map
$\alpha : X_1(13) \times X_1(13) \to \Sigma$ given by
$((x_1,y_1),(x_2,y_2)) \mapsto (T,X,Y)$ where
\[ T = \frac{ x_1 x_2 - x_1 + 1 }{x_2 - x_1}, \quad 
   X = \frac{x_2^3 - 3 x_2 + 1}{x_2 (x_2 - 1)}, \quad  
   Y = \frac{(X^2 - 3 X + 9)y_1y_2}{(x_1 - x_2)^3}.
\]
Moreover, if we define $\beta : \Sigma \to X_0(13) \times X_0(13)$ by
\[ (T,X,Y) \mapsto (s_1,s_2) = \left( \frac{(T^3 - 3 T + 1) X - 9 T (T
    - 1)} {T (T - 1) X + (T^3 - 3 T^2 + 1)},X \right). \]
then there is a commutative diagram
\[ \xymatrix{ X_1(13) \times X_1(13) \ar[rr]^{(\pi,\pi)}
  \ar[dr]_\alpha & & X_0(13) \times X_0(13) \\
  & \Sigma \ar[ur]_\beta } \]

The surface $\Sigma$ parametrises pairs of 13-isogenies together with
a choice of isomorphism between their kernels. It has infinitely many
rational points, since there is a genus $1$ fibration given by
$(T,X,Y) \mapsto T$, and it easy to exhibit a fibre (e.g. $T = -2$)
that is an elliptic curve of positive rank.  \qed

\begin{Example}
  The rational point $(T,X,Y) = (17/33, 1, 126340/33^3)$ on $\Sigma$
  corresponds to the elliptic curves
\begin{align*}
&E_1: & y^2 + y &= x^3 - x^2 - 2 x - 1, \\
&E_2: & y^2 + y &= x^3 - x^2 - 1424883795842044404862 x \\ 
&&  & \qquad \qquad \qquad \qquad \qquad ~- 20702237422068075268318817670099,
\end{align*}
with discriminants $\Delta(E_1) = -3 \cdot 7^2$ and
$\Delta(E_2) = 3 \cdot 7^2 \cdot 13^{13} \cdot 251^{13} \cdot 17681.$
By construction $E_1[13]$ and $E_2[13]$ have isomorphic
semi-simplifications.  However $E_1$ and $E_2$ are not $13$-congruent,
as may be seen from the fact that $p = 17681$ ramifies in
$\Q(E_2[13])/\Q$ but not in $\Q(E_1[13])/\Q$.
\end{Example}

\begin{Remark}
The surface $\Sigma$ also has a genus $2$ fibration given by 
$(T,X,Y) \mapsto X$. The fibres are twists of $X_1(13)$ parametrising 
$13$-isogenies with a given Galois action on the kernel.
\end{Remark}


\begin{thebibliography}{MM}

\frenchspacing
\renewcommand{\baselinestretch}{1}

\bibitem[AR]{AR} 
A. Adler and S. Ramanan, 
{\em Moduli of abelian varieties}, 
Lecture Notes in Mathematics, {\bf 1644}, Springer-Verlag, Berlin, 1996. 

\bibitem[B+]{cursedcurve}
J. Balakrishnan, N. Dogra, J.S. M\"uller, J. Tuitman and J. Vonk,
Explicit Chabauty-Kim for the split Cartan modular curve of level 13,
{\em Ann. of Math.} (2) {\bf{189}} (2019), no. 3, 885--944.

\bibitem[Ba]{Baran}
B. Baran, 
An exceptional isomorphism between modular curves of level 13,
{\em J. Number Theory} {\bf{145}} (2014), 273--300.

\bibitem[Be]{Best}
A. Best, personal communication, October 2019.

\bibitem[Bi]{Billerey}
N. Billerey,
On some remarkable congruences between two elliptic curves,
preprint, 2016, \url{arXiv:1605.09205 [math.NT]}

\bibitem[BCP]{Magma}
W. Bosma, J. Cannon and C. Playoust, 
The Magma algebra system I: The user language, {\em J. Symb. Comb.} {\bf{24}}, 
235-265 (1997). See also %the Magma home page at 
\url{http://magma.maths.usyd.edu.au/magma/}

\bibitem[C]{Cr} 
{J.E. Cremona}, 
{\em Algorithms for modular elliptic curves},
Cambridge University Press, Cambridge, 1997. 
See also \url{http://www.warwick.ac.uk/~masgaj/ftp/data/}

\bibitem[CF]{CF}
J.E. Cremona and N. Freitas, 
Global methods for the symplectic type of congruences between elliptic curves, 
preprint, 2019, \url{arXiv:1910.12290 [math.NT]}

\bibitem[DMS]{DMS}
V. Dose, P. Mercuri and C. Stirpe, 
Double covers of Cartan modular curves,
{\em J. Number Theory} {\bf 195} (2019), 96--114.

\bibitem[F1]{g1hess}
T.A. Fisher, 
The Hessian of a genus one curve, 
{\em Proc. Lond. Math. Soc.} (3) {\bf 104} (2012) 613-648. 

\bibitem[F2]{enqI}
T.A. Fisher,
Invariant theory for the elliptic normal quintic, 
I. Twists of $X(5)$, 
{\em Math. Ann.} {\bf 356} (2013), no. 2, 589-616. 

\bibitem[F3]{7and11}
T.A. Fisher, 
On families of 7- and 11-congruent elliptic curves,
{\em LMS J. Comput. Math.} {\bf{17}} (2014), no. 1, 536--564.

\bibitem[F4]{moduli}
T.A. Fisher,
Explicit moduli spaces for congruences of elliptic curves, 
to appear in {\em Math. Zeit.}, \url{arXiv:1804.10195 [math.NT]}

\bibitem[F5]{magmafile}
T.A. Fisher,
On families of $13$-congruent elliptic curves,
{\em Magma files accompany this article},
\url{https://www.dpmms.cam.ac.uk/~taf1000/papers/congr13.html}

\bibitem[FK]{FK}
N. Freitas and A. Kraus,
On the symplectic type of isomorphism of the $p$-torsion of elliptic curves,
to appear in {\em Memoirs of AMS}, \url{arXiv:1607.01218 [math.NT]}

\bibitem[FM]{FM}
G. Frey and M. M\"uller, 
Arithmetic of modular curves and applications,
in {\em Algorithmic algebra and number theory},
B.H. Matzat, G.-M. Greuel and G. Hiss (eds.),
(Heidelberg, 1997), Springer, Berlin, 1999.

\bibitem[FH]{FH}
W. Fulton and J. Harris, 
{\em Representation theory, A first course}, 
Graduate Texts in Mathematics {\bf 129}, Springer-Verlag, New York, 1991.

\bibitem[G]{Gorenstein}
D. Gorenstein, 
{\em Finite simple groups,
An introduction to their classification},
Plenum Publishing Corp., New York, 1982. 

\bibitem[Ha]{Halberstadt}
E. Halberstadt, 
Sur la courbe modulaire $X_{\text{nd\'ep}}(11)$,
{\em Experiment. Math.} {\bf{7}} (1998), no. 2, 163--174.

\bibitem[HK]{HK}
E. Halberstadt and A. Kraus, 
Sur la courbe modulaire $X_E(7)$,
{\em Experiment. Math.} {\bf{12}} (2003), no. 1, 27--40.

\bibitem[HS]{HS}
M. Hindry and J.H. Silverman, 
{\em Diophantine geometry. An introduction}, 
Graduate Texts in Mathematics {\bf 201}, Springer-Verlag, New York, 2000. 

\bibitem[Hu]{Huppert}
B. Huppert, 
{\em Endliche Gruppen I},
Die Grundlehren der Mathematischen Wissenschaften, 
Band {\bf{134}}, Springer-Verlag, Berlin-New York 1967.

\bibitem[KR]{KR}
E.J. Kani and O.G. Rizzo,
Mazur's question on mod $11$ representations of elliptic curves,
preprint, \url{http://www.mast.queensu.ca/~kani/mdqs.htm}

\bibitem[KS]{KS}
E. Kani and W. Schanz, 
Modular diagonal quotient surfaces,
{\em Math. Z.} {\bf{227}} (1998), no. 2, 337--366.

\bibitem[KO]{KO}
A. Kraus and J. Oesterl\'e, 
Sur une question de B. Mazur,
{\em Math. Ann.} {\bf{293}} (1992), no. 2, 259--275.

\bibitem[K]{Ku} 
A. Kumar, 
Hilbert modular surfaces for square discriminants and elliptic
subfields of genus~2 function fields, 
{\em Res. Math. Sci.} {\bf 2} (2015), Art. 24.

\bibitem[L]{lmfdb}
The LMFDB Collaboration, The $L$-functions and Modular Forms Database, 
\url{http://www.lmfdb.org}, 2019, [Online; accessed 18 December 2019].

\bibitem[PSS]{PSS}
B. Poonen, E.F. Schaefer and M. Stoll, 
Twists of $X(7)$ and primitive solutions to $x^2+y^3=z^7$,
{\em Duke Math. J.} {\bf{137}} (2007), no. 1, 103--158.

\bibitem[RS]{RubinSilverberg} 
K. Rubin and A. Silverberg, 
Families of elliptic curves with constant mod $p$ representations,
in {\em Elliptic curves, modular forms \& Fermat's 
last theorem} (Hong Kong, 1993), J.~Coates and S.-T.~Yau (eds.), 
Ser. Number Theory~I, Int. Press, Cambridge, MA, (1995) 148--161.

% \bibitem[V]{V2} 
% J. V\'elu, Courbes elliptique munies d'un sous-group 
% $\Z/n\Z \times \mu_n$,
% {\em Bull. Math. Soc. math. France},
% M\'emoire {\bf {57}} (1978). 

\end{thebibliography}
\end{document}